\newcommand{\defeq}{\mathrel{\mathop:}=}
\newcommand{\eqdef}{=\mathrel{\mathop:}}
\newcommand{\Lip}{\mathrm{Lip}}
\newcommand{\ev}{\operatorname{ev}}
\newcommand{\testconvex}{\mathcal{C}}
\newcommand{\testiconvex}{\mathcal{C}_+}
\newcommand{\bigvphantom}{\vphantom{\big|}}
\newcommand{\uarg}{\,\cdot\,}
\newcommand{\ud}{\mathrm{d}}
\newcommand{\R}{\mathbb{R}}
\newcommand{\Q}{\mathbb{Q}}
\newcommand{\N}{\mathbb{N}}
\newcommand{\pr}{\mathbb{P}}
\renewcommand{\P}{\mathbb{P}}
\newcommand{\E}{\mathbb{E}}
\newcommand{\mA}{\mathcal{A}}
\newcommand{\mE}{\mathcal{E}}
\newcommand{\mF}{\mathcal{F}}
\newcommand{\mG}{\mathcal{G}}
\newcommand{\mP}{\mathcal{P}}
\newcommand{\mM}{\mathcal{M}}
\newcommand{\mS}{\mathcal{S}}
\newcommand{\charfun}[1]{\mathbf{1}\left(#1\right)}
\newcommand{\ind}{\mathbf{1}}
\newcommand{\given}{\;:\;}
\newcommand{\bigmid}{\;\big|\;}
\newcommand{\lecx}{\le_{\mathrm{cx}}}
\newcommand{\lecxsp}{\le_{\mathrm{cx}\phantom{i}}}
\newcommand{\leicx}{\le_{\mathrm{icx}}}
\newcommand{\eqd}{\overset{\mathrm{d}}{=}} 
\newcommand{\deq}{\overset{\mathrm{d}}{=}} 
\newcommand{\graph}{\mathrm{graph}} 
\newtheorem{theorem}{Theorem}[section]
\newtheorem{proposition}[theorem]{Proposition}
\newtheorem{lemma}[theorem]{Lemma}
\newtheorem*{theorem*}{Theorem}
\newtheorem*{lemma*}{Lemma}
\newtheorem*{proposition*}{Proposition}
\newtheorem*{conjecture*}{Conjecture}
\theoremstyle{definition}
\theoremstyle{remark}
\newtheorem{remark}[theorem]{Remark}
\numberwithin{equation}{section}
\title{Conditional convex orders and measurable martingale couplings}
\author{Lasse Leskel\"a}
\address{Lasse Leskel\"a, 
  Department of Mathematics and Systems Analysis,
  School of Science, 
  Aalto University,
  PO Box 11100, 00076 Aalto, Finland}
\email{lasse.leskela@aalto.fi}
\author{Matti Vihola}
\address{Matti Vihola, Department of Mathematics and Statistics, University of
Jyv\"askyl\"a, P.O.Box 35, FI-40014 Univ.~of Jyv\"askyl\"a, Finland}
\email{matti.vihola@iki.fi}
\subjclass[2000]{60E15}
\keywords{convex stochastic order, increasing convex stochastic order,
martingale coupling, probability kernel, conditional coupling,
pointwise coupling}
\begin{document}

\begin{abstract} 
Strassen's classical martingale coupling theorem states that two
random vectors are ordered in the convex (resp.\ increasing convex)
stochastic order if and only if they admit a martingale (resp.\
submartingale) coupling. By analysing topological properties of spaces
of probability measures equipped with a Wasserstein metric and
applying a measurable selection theorem, we prove a conditional
version of this result for random vectors conditioned on a random
element taking values in a general measurable space. We provide an
analogue of the conditional martingale coupling theorem in the
language of probability kernels, and discuss how it can be applied in
the analysis of pseudo-marginal Markov chain Monte Carlo algorithms.
We also illustrate how our results imply the existence of a measurable
minimiser in the context of martingale optimal transport.
\end{abstract} 

\maketitle

\section{Introduction and main results}
\label{sec:intro} 

\subsection{Convex stochastic orders} 

Stochastic orders and relations provide powerful tools to compare
distributions of random variables and processes, and they have been
used in various applications
\cite{leskela-2010,shaked-shanthikumar,muller-stoyan,szekli}.  We
focus here on two closely related stochastic orders which are
characterised by expectations of convex functionals, the convex order
and the increasing convex order. The convex order is a common measure
of `variability' or `dispersion' of random variables and vectors, and
it arises naturally for example in majorisation \cite{marshall-olkin}.
The increasing convex order allows to compare also
random vectors with different means.

Let $\mu$ and $\nu$ be probability measures on $\R^d$. We say that $\mu$ is less than $\nu$ in the \emph{convex order}, denoted $\mu \lecx \nu$, if
\begin{equation}
 \label{eq:IntegralOrder}
 \int \phi \,\ud\mu \le \int \phi \,\ud\nu
\end{equation}
for all convex $\phi: \R^d \to \R_+$. We say that $\mu$ is less than
$\nu$ in the \emph{increasing convex order}, denoted $\mu \leicx \nu$,
if \eqref{eq:IntegralOrder} holds for all convex $\phi: \R^d
\to \R_+$ which are increasing with respect to the 
usual coordinate-wise partial order $x\le y$.

The following type of characterisation of convex orders in terms of
martingale couplings will be of our main interest. We denote by
$\mM_n(\R^d)$ (resp.\ $\mM^*_n(\R^d)$) the set of probability measures
$\lambda$ on $(\R^d)^n$ such that $\lambda$ is the joint distribution
of some $\R^d$-valued martingale (resp.\ submartingale) $(X_t)$ parametrised by $t
\in \{1,\dots,n\}$. Recall that a \emph{coupling} of probability
measures $\mu_1,\dots,\mu_n$ on $\R^d$ is a probability measure on
$(\R^d)^n$ having $\mu_1,\dots,\mu_n$ as its marginal distributions.

\begin{theorem}[Strassen \cite{strassen}] 
\label{th:strassen} 
For any probability measures $\mu$ and $\nu$ on $\R^d$ with finite first moments:
\begin{enumerate}[(i)]
\item $\mu \lecx \nu$ if and only if $\mu$ and $\nu$ admit a coupling
  $\lambda \in \mM_2(\R^d)$,
\item $\mu \leicx \nu$ if and only if $\mu$ and $\nu$ admit a coupling
  $\lambda \in \mM^*_2(\R^d)$.
\end{enumerate}
\end{theorem}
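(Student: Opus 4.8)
The plan is to prove Strassen's theorem by first handling the easy implications and then treating the substantive converse directions via a Hahn–Banach/duality argument.

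\medskip

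\noindent\textbf{Easy directions.} Suppose first that $\mu$ and $\nu$ admit a martingale coupling $\lambda \in \mM_2(\R^d)$, realised by a martingale $(X_1, X_2)$ with $X_1 \sim \mu$ and $X_2 \sim \nu$. For any convex $\phi : \R^d \to \R_+$, conditional Jensen's inequality gives $\E[\phi(X_2) \mid X_1] \ge \phi(\E[X_2 \mid X_1]) = \phi(X_1)$ almost surely, and taking expectations yields $\int \phi \,\ud\nu \ge \int \phi \,\ud\mu$, i.e.\ $\mu \lecx \nu$. The submartingale case is identical except that one additionally uses monotonicity: if $\phi$ is increasing and convex, then $\E[\phi(X_2)\mid X_1] \ge \phi(\E[X_2\mid X_1]) \ge \phi(X_1)$ since $\E[X_2 \mid X_1] \ge X_1$ coordinatewise, giving $\mu \leicx \nu$.

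\medskip

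\noindent\textbf{Hard direction: from the order to a coupling.} This is where the real work lies. Assume $\mu \lecx \nu$. I would first reduce to constructing a probability kernel $K$ from $\R^d$ to $\R^d$ — a ``martingale kernel'' — such that $\int y \, K(x,\ud y) = x$ for $\mu$-a.e.\ $x$ and such that $\int K(x,\uarg)\,\mu(\ud x) = \nu$; the coupling is then $\lambda(\ud x, \ud y) = \mu(\ud x) K(x, \ud y)$, and since $\E[X_2 \mid X_1 = x] = x$ this is automatically a martingale law. The existence of such a kernel is the content of Strassen's theorem and is typically obtained by a functional-analytic separation argument: consider, for each $x$, the set of probability measures on $\R^d$ with barycenter $x$; one wants to select from these a measurable family integrating to $\nu$. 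The classical route (Strassen's original, and Cartier–Fell–Meyer) realises $\nu$ as a ``dilation'' of $\mu$ by exhibiting $\mu$ as a limit of measures obtained from $\nu$ by successive averaging, or dually: the set of measures $\rho$ on $(\R^d)^2$ with first marginal $\mu$, second marginal $\nu$, and the martingale property is shown nonempty by a Hahn–Banach argument in the space of (signed) measures, using the hypothesis $\mu \lecx \nu$ precisely to verify the relevant linear inequality that makes the target functional bounded by the support function of the constraint set. Compactness (tightness, guaranteed by the finite-first-moment assumption together with $\mu\lecx\nu$, which forces $\int |y|\,\ud\nu \ge \int |x|\,\ud\mu$ but more importantly controls tails uniformly) is used to pass to the limit / close the constraint set in the weak topology.

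\medskip

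\noindent\textbf{The increasing convex case.} For $\mu \leicx \nu$, I would introduce an auxiliary measure. Since $\leicx$ allows $\nu$ to be ``larger'' than $\mu$, one finds an intermediate probability measure $\tilde\nu$ with $\mu \lecx \tilde\nu$ and $\tilde\nu \le_{\mathrm{st}} \nu$ (first-order stochastic dominance); concretely $\tilde\nu$ is constructed so that any increasing convex test function separates correctly. Then apply the convex-order case to get a martingale coupling of $\mu$ and $\tilde\nu$, and compose with a monotone (Strassen-type) coupling of $\tilde\nu$ and $\nu$ realising the stochastic dominance; the composition is a submartingale coupling of $\mu$ and $\nu$. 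Establishing the existence of such an intermediate $\tilde\nu$ is the crux here — again a separation argument in measure space, or alternatively one works directly in $(\R^d)^2$ with the submartingale constraint set and repeats the Hahn–Banach argument with the inequality now tested only against increasing convex functions.

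\medskip

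\noindent The main obstacle I anticipate is the measurable/constructive production of the martingale kernel (equivalently, nonemptiness and weak-compactness of the constrained set of couplings): verifying that $\mu \lecx \nu$ gives exactly the linear estimate needed for the Hahn–Banach separation, and securing enough compactness — via the finite first moments and uniform integrability forced by the order — to realise the coupling as an actual probability measure rather than a finitely additive one. Everything else (Jensen for the easy direction, the reduction to kernels, the composition trick in the $\leicx$ case) is routine once this core step is in hand; and indeed the paper's contribution is to upgrade exactly this kernel-existence statement to a *measurably parametrised* version, so the classical argument sketched here is the natural template to build on.
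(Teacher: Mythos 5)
First, a point of reference: the paper does not prove this theorem at all. It is quoted as Strassen's classical result, with the explicit remark that the ``if'' part follows from Jensen's inequality while the ``only if'' part is due to Strassen (via Hahn--Banach) or, more constructively, to M\"uller and Stoyan; the paper's own contribution begins only with the kernel-parametrised version (Theorem~\ref{th:strassenKernel}). So there is no internal proof to match your attempt against; the natural comparison is with those cited arguments, whose outline you have reproduced.

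Within that outline, your easy direction is complete and correct: conditional Jensen's inequality (plus monotonicity of $\phi$ in the submartingale case) yields $\mu \lecx \nu$, resp.\ $\mu \leicx \nu$, from a coupling in $\mM_2(\R^d)$, resp.\ $\mM^*_2(\R^d)$. The converse, however, is only a plan, and that is where the entire content of the theorem lies. You never actually set up the separation argument: you do not specify the space and the convex sets (or sublinear functional) to which Hahn--Banach is applied, do not verify that the hypothesis $\mu \lecx \nu$ furnishes exactly the required linear inequality, and do not carry out the compactness step that rules out ending with a merely finitely additive object --- an issue you yourself flag; in the present setting this is the tightness of $\Gamma(\mu,\nu)$ together with closedness of the martingale constraint, neither of which you establish. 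Likewise, in the increasing convex case the existence of an intermediate measure $\tilde\nu$ with $\mu \lecx \tilde\nu \le_{\mathrm{st}} \nu$ (or with the two relations in the other order) is precisely the nontrivial step, and you assert it rather than prove it; the subsequent composition of a martingale coupling with a monotone coupling is indeed routine, but even there the conditional-independence gluing needed to conclude $\E(\hat Y \mid \hat X) \ge \hat X$ is left implicit. In short: right strategy and a correct easy half, but the ``only if'' halves --- the substance of Strassen's theorem --- are described rather than proved, so as it stands the proposal is a sketch with a genuine gap at its core.
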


Stochastic orders are often expressed in the notation of random
variables instead of probability measures. Let $X$ and $Y$ be
random vectors on $\R^d$ defined on a probability space
$(\Omega,\mA,\pr)$. Then we denote $X \lecx Y$ (resp.\ $X \leicx Y$)
if the corresponding probability distributions $\pr \circ X^{-1}$ and
$\pr \circ Y^{-1}$ are ordered according to $\lecx$ (resp.\ $\leicx$),
that is,
\begin{equation}
 \label{eq:ExpectationOrder}
 \E \phi(X) \le \E \phi(Y)
\end{equation}
for all convex (resp.\ increasing convex) functions $\phi: \R^d \to
\R_+$.  Recall that a coupling of random vectors 
$X_1,\dots,X_n$ on $\R^d$ is a random vector 
$(\hat X_1,\dots, \hat X_d)$ defined on some probability space and
taking values in $(\R^d)^n$ such that $\hat X_i \deq X_i$ for all $i$, where
$\deq$ denotes equality in distribution. 
In this notation, Theorem~\ref{th:strassen} can be reformulated
as follows.

\begin{theorem}
\label{th:strassenRandom} 
For any real-valued random vectors $X$ and $Y$ with finite first moments:
\begin{enumerate}[(i)]
\item $X  \lecx Y$ if and only if $X$ and $Y$ admit a coupling $(\hat X, \hat Y)$ which satisfies $\hat X = \E( \hat Y \, | \, \hat X )$ almost surely.
\item $X \leicx Y$ if and only if $X$ and $Y$ admit a coupling $(\hat X, \hat Y)$ which satisfies $\hat X \le \E( \hat Y \, | \, \hat X )$ almost surely.
\end{enumerate}
\end{theorem}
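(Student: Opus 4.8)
The plan is to deduce Theorem~\ref{th:strassenRandom} directly from Theorem~\ref{th:strassen} by passing between the measure-theoretic and the random-vector formulations. Write $\mu = \P\circ X^{-1}$ and $\nu = \P\circ Y^{-1}$ for the laws of $X$ and $Y$ on $\R^d$; by the finite first moment hypothesis both have finite first moments, so all conditional expectations below are well defined. The first, routine, observation is that $X$ and $Y$ admit a coupling $(\hat X,\hat Y)$ whose joint law has a prescribed property if and only if $\mu$ and $\nu$ admit a coupling $\lambda$ on $(\R^d)^2$ with that same property: from $(\hat X,\hat Y)$ take $\lambda$ to be its joint distribution, and conversely from $\lambda$ work on the canonical space $\bigl((\R^d)^2,\mathcal{B}((\R^d)^2),\lambda\bigr)$ with $(\hat X,\hat Y)$ the pair of coordinate projections. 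Since $\E(\hat Y\mid\hat X)=\hat X$ a.s.\ (resp.\ $\hat X\le\E(\hat Y\mid\hat X)$ a.s.) is a property of the joint law, this reduces the theorem to a statement about couplings of $\mu$ and $\nu$.

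Next I would record the elementary link between the classes $\mM_2(\R^d),\mM^*_2(\R^d)$ and conditional expectations. Writing $(Z_1,Z_2)$ for the coordinate projections on $(\R^d)^2$, I claim that $\lambda\in\mM_2(\R^d)$ if and only if $\E_\lambda|Z_1|,\E_\lambda|Z_2|<\infty$ and $\E_\lambda(Z_2\mid Z_1)=Z_1$ $\lambda$-a.s., and $\lambda\in\mM^*_2(\R^d)$ if and only if instead $\E_\lambda(Z_2\mid Z_1)\ge Z_1$ $\lambda$-a.s.\ (coordinatewise). One direction is immediate: if the stated relation holds then $(Z_1,Z_2)$ is a (sub)martingale for its own natural filtration under $\lambda$. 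For the other direction, suppose $(Z_1,Z_2)$ is a (sub)martingale with respect to some filtration $(\F_t)$; since $\sigma(Z_1)\subseteq\F_1$, the tower property gives $\E(Z_2\mid Z_1)=\E\bigl(\E(Z_2\mid\F_1)\mid Z_1\bigr)$, which equals $\E(Z_1\mid Z_1)=Z_1$ in the martingale case and is $\ge Z_1$ in the submartingale case by monotonicity of conditional expectation.

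Combining the two observations finishes the proof. For part~(i): $X\lecx Y$ means $\mu\lecx\nu$, which by Theorem~\ref{th:strassen}(i) is equivalent to the existence of a coupling $\lambda\in\mM_2(\R^d)$ of $\mu$ and $\nu$, which by the above is equivalent to the existence of a coupling $(\hat X,\hat Y)$ of $X$ and $Y$ with $\E(\hat Y\mid\hat X)=\hat X$ a.s. Part~(ii) is the same argument with $\lecx$, $\mM_2$ and ``$=$'' replaced by $\leicx$, $\mM^*_2$ and ``$\ge$'', invoking Theorem~\ref{th:strassen}(ii). The content of the statement is therefore purely a matter of unwinding definitions; the only point needing an argument is the passage from a (sub)martingale with respect to a general filtration to the conditional-expectation characterisation relative to $\sigma(Z_1)$, and that is the one-line tower-property-plus-monotonicity step above. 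I do not anticipate any genuine obstacle here.
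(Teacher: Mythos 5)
Your proposal is correct and follows essentially the same route as the paper, which presents Theorem~\ref{th:strassenRandom} as a direct reformulation of Theorem~\ref{th:strassen}; the translation steps you spell out (passing to the canonical coupling space and characterising $\mM_2(\R^d)$, $\mM^*_2(\R^d)$ via $\E(Z_2\mid Z_1)=Z_1$, resp.\ $\ge Z_1$) are exactly the content the paper records without proof in Lemmas~\ref{lem:mart-char} and~\ref{lem:submart-char}.
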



\subsection{Main results} 

The main contribution of the present paper is the following theorem which extends the martingale characterisation
in Theorem~\ref{th:strassen} to pairs of probability measures indexed
by a parameter $\theta$ with values in some measurable space $S$.
Recall that a \emph{probability kernel} from $S$ to $\R^d$ is a map $P: (\theta,B) \mapsto P_\theta(B)$ such that
\begin{itemize}
\item $P_\theta$ is a probability measure on $\R^d$ for every
  $\theta\in S$, and
\item $\theta \mapsto P_\theta(B)$ is measurable for every Borel set $B \subset \R^d$.
\end{itemize}
We say that $P$ has \emph{finite first moments} if $\int |x| \,
P_\theta(\ud x) < \infty$ for all $\theta$.
We extend the notion of coupling to probability kernels as follows.  Let $P$ and $Q$
be probability kernels from $S$ to $\R^d$, and assume that $R$ is a
probability kernel from $S$ to $\R^d\times\R^d$. We say that $R$ is a \emph{pointwise
coupling} of $P$ and $Q$ if $R_\theta$ is a coupling of $P_\theta$ and
$Q_\theta$ for every $\theta$.

\begin{theorem} 
\label{th:strassenKernel} 
For any probability kernels $P$ and $Q$ from a measurable space $S$ to
$\R^d$ with finite first moments:
\begin{enumerate}[(i)]
\item $P_\theta \lecx Q_\theta$ for all $\theta$ if and only if $P$ and $Q$ 
  admit a pointwise coupling $R$ such that $R_\theta \in \mM_2(\R^d)$ for all $\theta$,
\item $P_\theta \leicx Q_\theta$ for all $\theta$ if and only if $P$ and $Q$ 
  admit a pointwise coupling $R$ such that $R_\theta \in \mM^*_2(\R^d)$ for all $\theta$.
\end{enumerate}
\end{theorem}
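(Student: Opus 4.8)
The plan is to deduce Theorem~\ref{th:strassenKernel} from the pointwise Strassen characterisation (Theorem~\ref{th:strassen}) by a measurable selection argument. The ``only if'' directions are trivial: if $R$ is a pointwise coupling with $R_\theta\in\mM_2(\R^d)$ (resp.\ $\mM^*_2(\R^d)$), then for each fixed $\theta$ Theorem~\ref{th:strassen} gives $P_\theta\lecx Q_\theta$ (resp.\ $P_\theta\leicx Q_\theta$). So the whole content is the ``if'' direction: assuming $P_\theta\lecx Q_\theta$ for every $\theta$, we know from Theorem~\ref{th:strassen} that for each $\theta$ the (nonempty) set
\[
 \mathcal{R}(\theta) \;=\; \bigl\{ \rho\in\mM_2(\R^d) : \rho \text{ is a coupling of } P_\theta \text{ and } Q_\theta \bigr\}
\]
is nonempty, and we must produce a measurable selection $\theta\mapsto R_\theta$ from this correspondence. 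The increasing convex case is identical with $\mM_2$ replaced by $\mM^*_2$.

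First I would fix a good topology on the space of candidate couplings. Because $P$ and $Q$ have finite first moments, it is natural to work on the space $\mathcal{P}_1(\R^d\times\R^d)$ of probability measures with finite first moment, equipped with the Wasserstein-$1$ metric $W_1$; this makes it a Polish space, and the constraints defining $\mathcal{R}(\theta)$ behave well there. Concretely I would check: (a) the set of couplings of two fixed measures is $W_1$-closed, and the martingale constraint $\int(y-x)\,\rho(\ud x,\ud y)=0$ together with $\int f(x)(y-x)\,\rho(\ud x,\ud y)=0$ for a countable dense family of bounded Lipschitz $f$ cuts out $\mM_2(\R^d)$ (restricted to measures with finite first moment) as a Borel, indeed $W_1$-closed, subset; similarly the submartingale constraint $\int f(x)(y-x)\,\rho(\ud x,\ud y)\ge 0$ for $f\ge 0$ bounded Lipschitz gives $\mM^*_2(\R^d)$. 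Then I would show the correspondence $\theta\mapsto\mathcal{R}(\theta)$ is measurable in the appropriate sense — e.g.\ it has a measurable graph — using that $\theta\mapsto P_\theta$ and $\theta\mapsto Q_\theta$ are measurable maps into $\mathcal{P}_1(\R^d)$ (which follows from the kernel property plus the finite-first-moment hypothesis, after verifying that measurability in the Borel sense is equivalent to $W_1$-measurability) and that the defining integral functionals $\rho\mapsto\int g\,\ud\rho$ are continuous on $\mathcal{P}_1$ for $g$ of at most linear growth.

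The main obstacle — and the step that requires the real work — is invoking a measurable selection theorem to extract $\theta\mapsto R_\theta$ from the graph-measurable, nonempty-valued correspondence $\mathcal{R}$. Here $S$ is only an abstract measurable space, not necessarily Polish, so I would use the Kuratowski–Ryll-Nardzewski / Jankov–von Neumann-type selection theorems that apply when the \emph{target} is Polish and the correspondence has closed (or Borel) values and a measurable graph: for each $\theta$, $\mathcal{R}(\theta)$ is a nonempty closed subset of the Polish space $\mathcal{P}_1(\R^d\times\R^d)$, and graph-measurability over an arbitrary measurable $S$ suffices to get a selection that is measurable with respect to the universal completion of the $\sigma$-algebra on $S$; a short additional argument (or working directly with the Borel $\sigma$-algebra generated by $W_1$) then upgrades this to genuine measurability, or one simply notes that the statement as phrased only needs $\theta\mapsto R_\theta(B)$ measurable for Borel $B\subset\R^d\times\R^d$, which the selection provides. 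I would present the metric-space preliminaries (Polishness of $\mathcal{P}_1$ under $W_1$, closedness of the coupling and (sub)martingale constraints, continuity of the relevant functionals) as a sequence of lemmas, then conclude by applying the selection theorem to $\mathcal{R}$, and finally translate the selected kernel back into the statement: $R_\theta\defeq$ the selected coupling gives a pointwise coupling with $R_\theta\in\mM_2(\R^d)$ (resp.\ $\mM^*_2(\R^d)$) for every $\theta$, as required.
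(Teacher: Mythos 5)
Your overall strategy (work in $\mP_1(\R^d\times\R^d)$ with the Wasserstein metric, show the coupling and (sub)martingale constraints are closed, check measurability of $\theta\mapsto (P_\theta,Q_\theta)$, and extract a pointwise martingale coupling by a measurable selection) is the same as the paper's, and the preliminary steps you list are correct and correspond to the paper's lemmas. The gap is at the step you yourself flag as the main obstacle: the measurability hypothesis you propose to verify for the correspondence $\theta\mapsto\mathcal{R}(\theta)$, namely graph measurability, is not the hypothesis of the Kuratowski--Ryll-Nardzewski theorem, and the selection theorems that do work from a measurable graph over an arbitrary measurable space $S$ (von Neumann--Aumann/Sainte-Beuve type, or Jankov--von Neumann when $S$ is standard Borel) only deliver a selection measurable with respect to the universal completion of $\mS$. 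Since the conclusion requires $R$ to be a genuine probability kernel, i.e.\ $\theta\mapsto R_\theta(B)$ must be $\mS$-measurable for every Borel $B$, a universally measurable selection is not enough, and there is no ``short additional argument'' upgrading it: over an abstract $(S,\mS)$ with no measure specified, universal-completion measurability does not imply $\mS$-measurability, so your fallback remark that the selection already provides what the statement needs is exactly what fails.

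What is missing is the stronger, Kuratowski--Ryll-Nardzewski-compatible measurability of the coupling correspondence: one must show that $F^-(A)=\{\theta\in S\given \Gamma(P_\theta,Q_\theta)\cap A\neq\emptyset\}$ lies in $\mS$ for every closed $A\subset\mP_1(\R^d\times\R^d)$. This is the real work in the paper (Proposition~\ref{prop:couplling-measurable}): writing $F^-(A)=\{\theta\given (P_\theta,Q_\theta)\in\Pi(A)\}$ with $\Pi$ the marginaliser, one needs that $\Pi$ maps closed sets to Borel sets; the paper gets this from compactness of the coupling sets $\Gamma(\mu,\nu)$ (Lemma~\ref{the:CouplingCompact}), continuity of $\Pi$ (Lemma~\ref{the:ContinuousProjection}), and Novikov's theorem on Borel sets with compact sections (Lemma~\ref{the:BorelImages}). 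Note that compactness of $\Gamma(\mu,\nu)$, which you never invoke, is essential at this point. With this weak measurability in hand, intersecting with the closed set $\mM_2(\R^d)$ (resp.\ $\mM^*_2(\R^d)$) preserves measurability, Strassen's theorem gives nonemptiness, and Kuratowski--Ryll-Nardzewski then yields a selection measurable with respect to $\mS$ itself, which (after checking measurability of the evaluation maps $\ev_B$, Lemma~\ref{lem:measures-to-kernel}) is a bona fide kernel. As written, your argument does not go through for a general measurable space $S$; replacing the graph-measurability step by this argument repairs it.
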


Conditional versions of integral stochastic orders may be defined by
considering conditional analogues of~\eqref{eq:ExpectationOrder}. Let
$Z$ be a random element with values in a measurable space $S$, defined
on the same probability space as random vectors $X$ and
$Y$ on $\R^d$. Then we denote $X \mid Z \lecx Y \mid Z$ (resp.\ $X \mid Z \leicx
Y \mid Z$) if
\[
 \E (\phi(X) \mid Z ) \le \E (\phi(Y) \mid Z) \quad \text{almost surely}
\]
for all convex (resp.\ increasing convex) functions $\phi: \R^d \to
\R$ such that $\phi(X)$ and $\phi(Y)$ are integrable. As a corollary
of Theorem~\ref{th:strassenKernel}, we will prove the following
conditional analogue of Theorem~\ref{th:strassenRandom}. Here a
\emph{$Z$-conditional coupling of $X$ and $Y$} is a random element
$(\hat X, \hat Y, \hat Z)$ such that $(\hat X, \hat Z) \deq (X,Z)$ and
$(\hat Y, \hat Z) \deq (Y,Z)$.

\begin{theorem}
\label{cor:cond-strassen-geni} 
For any real-valued random vectors $X$ and $Y$ with finite first moments and any random element $Z$
in a measurable space $S$:
\begin{enumerate}[(i)]
\item \label{item:cond-cx-strassen-geni}
  $X \mid Z \lecx  Y \mid Z$ if and only if $X$ and $Y$ admit a $Z$-conditional coupling $(\hat X, \hat Y, \hat Z)$ such
that $\hat X = \E( \hat Y \mid \hat X, \hat Z )$ almost surely.
\item \label{item:cond-icx-strassen-geni}
  $X \mid Z \leicx Y \mid Z$ if and only if $X$ and $Y$ admit a $Z$-conditional coupling $(\hat X, \hat Y, \hat Z)$ such
that $\hat X \le \E( \hat Y \mid \hat X, \hat Z )$ almost surely.
\end{enumerate}
\end{theorem}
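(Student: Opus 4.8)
The plan is to derive Theorem~\ref{cor:cond-strassen-geni} from Theorem~\ref{th:strassenKernel} by passing from the conditional-order statement to a pointwise-order statement for regular conditional distributions, and then back from a pointwise martingale coupling of kernels to a conditional martingale coupling of random vectors. I only treat part~\eqref{item:cond-cx-strassen-geni} in detail; part~\eqref{item:cond-icx-strassen-geni} is identical with $\le$ in place of equality and $\mM^*_2$ in place of $\mM_2$.

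First I would set up regular conditional distributions. Since $X$ and $Y$ are $\R^d$-valued (a Polish space) and $Z$ takes values in a measurable space $S$, let $P_\theta \defeq \pr(X \in \uarg \mid Z = \theta)$ and $Q_\theta \defeq \pr(Y \in \uarg \mid Z = \theta)$ be versions of the regular conditional laws; these exist and are probability kernels from $S$ to $\R^d$. Write $\iota$ for the law of $Z$ on $S$. The key translation step is: $X \mid Z \lecx Y \mid Z$ holds if and only if $P_\theta \lecx Q_\theta$ for $\iota$-almost every $\theta$. The forward direction uses that for each convex $\phi$ the conditional inequality $\E(\phi(X)\mid Z) \le \E(\phi(Y)\mid Z)$ a.s. means $\int \phi \,\ud P_\theta \le \int \phi \,\ud Q_\theta$ for $\iota$-a.e.\ $\theta$; to get a single $\iota$-null set working simultaneously for all convex nonnegative $\phi$ one restricts to a countable convergence-determining family of convex functions (e.g.\ suitable maxima of finitely many affine functions with rational coefficients, together with the coordinate functions to handle the first-moment/affine part) and takes the union of the countably many null sets. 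After modifying $P$ and $Q$ on an $\iota$-null set of $\theta$ (replacing them there by a common measure, e.g.\ $\delta_0$), we may assume $P_\theta \lecx Q_\theta$ for \emph{all} $\theta\in S$, and both kernels still have finite first moments (after a further harmless modification on the null set). The converse direction is immediate by integrating the pointwise inequality.

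Next I would apply Theorem~\ref{th:strassenKernel}\,(i) to the modified kernels $P,Q$: there is a pointwise coupling $R$, a probability kernel from $S$ to $\R^d\times\R^d$, with $R_\theta \in \mM_2(\R^d)$ for every $\theta$, i.e.\ $R_\theta$ is the law of a two-step martingale. Now I construct the conditional coupling on the space $\Omega' \defeq \R^d\times\R^d\times S$ equipped with the probability measure $\pr'(\ud x, \ud y, \ud\theta) \defeq R_\theta(\ud x,\ud y)\,\iota(\ud\theta)$, and let $(\hat X,\hat Y,\hat Z)$ be the coordinate projections. Since $R_\theta$ has first marginal $P_\theta$, the $(\hat X,\hat Z)$-marginal of $\pr'$ is $P_\theta(\ud x)\,\iota(\ud\theta)$, which is exactly the joint law of $(X,Z)$ by the definition of regular conditional distribution and the fact that the modification was on an $\iota$-null set; hence $(\hat X,\hat Z)\deq(X,Z)$, and similarly $(\hat Y,\hat Z)\deq(Y,Z)$. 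So $(\hat X,\hat Y,\hat Z)$ is a $Z$-conditional coupling. It remains to check the martingale identity $\hat X = \E'(\hat Y \mid \hat X,\hat Z)$ a.s. The property $R_\theta\in\mM_2(\R^d)$ says precisely that under $R_\theta$ the first coordinate equals the conditional expectation of the second given the first; integrating against $\iota$, and using that conditioning additionally on $\hat Z=\theta$ is consistent with the disintegration $\pr' = \int R_\theta\,\iota(\ud\theta)$, gives $\E'(\hat Y \mid \hat X, \hat Z) = \hat X$ a.s. For the converse direction of the theorem, given a $Z$-conditional coupling $(\hat X,\hat Y,\hat Z)$ with $\hat X=\E(\hat Y\mid\hat X,\hat Z)$, one checks that $\E(\phi(\hat X)\mid\hat Z) \le \E(\phi(\hat Y)\mid\hat Z)$ a.s.\ for convex $\phi$ by conditional Jensen applied with the $\sigma$-algebra generated by $(\hat X,\hat Z)$ nested inside that generated by $\hat Z$, and then transfers this to $(X,Y,Z)$ via the equalities in distribution $(\hat X,\hat Z)\deq(X,Z)$, $(\hat Y,\hat Z)\deq(Y,Z)$.

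The main obstacle is the measurability bookkeeping around regular conditional distributions: ensuring that a single $\iota$-null set suffices for all convex test functions simultaneously, and that after modification the kernels are genuine probability kernels with finite first moments to which Theorem~\ref{th:strassenKernel} applies; this is where the countable convergence-determining family of convex functions and a careful treatment of the affine (first-moment) part enter. The passage back to random variables via the disintegration $\pr'=\int R_\theta\,\iota(\ud\theta)$ and the identification of conditional expectations under $\pr'$ with the fibrewise martingale property of $R_\theta$ is routine once the disintegration is set up correctly. I would also remark that the ``only if'' direction does not use the Polishness of $\R^d$ beyond the existence of regular conditional distributions of $\R^d$-valued random vectors, which is standard.
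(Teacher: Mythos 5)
Your proposal is correct and follows essentially the same route as the paper: translating the conditional order into a pointwise order of regular conditional distributions via a countable family of convex test functions (the paper's Proposition~\ref{cor:cond-cx-char} and Lemma~\ref{lem:cond-countable-char}), modifying the kernels on a null set, invoking Theorem~\ref{th:strassenKernel}, constructing $(\hat X,\hat Y,\hat Z)$ from the disintegration $\mu(\ud\theta)R_\theta(\ud x\times\ud y)$, and proving the converse by conditional Jensen plus the distributional identities. The only quibble is a harmless slip of wording in the converse direction ($\sigma(\hat Z)$ is nested inside $\sigma(\hat X,\hat Z)$, not the other way around), which does not affect the argument.
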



\subsection{Related work} 

Theorem~\ref{th:strassen} extends by induction to the case where one
has countably many distributions $(\mu_n)_{n\in\N}$ with $\mu_n \lecx
\mu_{n+1}$ or $\mu_n \leicx \mu_{n+1}$. Kellerer \cite{kellerer}
extended this to the uncountable setting, by showing that a collection
of probability distributions parametrised by $t \in \R_+$ satisfies
$\mu_s \lecx \mu_t$ (resp.\ $\mu_s \leicx \mu_t$) for all $s \le t$ if
and only if there exists a martingale (resp.\ submartingale) $(X_t)$
with $X_t$ distributed according to $\mu_t$ for all $t \in \R_+$. This
relation is further explored in the recent monograph
\cite{hirsch-profeta-roynette-yor}; see also \cite{lowther}. The 'if'
part of Theorem~\ref{th:strassen} can be proved by a simple
application of Jensen's inequality, whereas the 'only if' part is more
subtle. Strassen's proof \cite[Theorems 8 and 9]{strassen} uses the
Hahn-Banach theorem. M\"uller and Stoyan \cite[Theorem 1.5.20 and
Corollary 1.5.21]{muller-stoyan} provide a more constructive proof,
still relying on a limiting argument. In fact, Strassen's
work \cite{strassen} addresses more general integral stochastic
orders, defined by requiring \eqref{eq:IntegralOrder} for a general
class of functions $\phi$. This allows to define orderings of random
variables with values in general measurable spaces, as further
investigated by Shortt \cite{shortt} and Hirshberg and Shortt
\cite{hirshberg-shortt}; see also Kertz and R\"osler
\cite{kertz-rosler}. Another direction of extending the theory of
stochastic orders is to consider nontransitive relations, see
Leskel\"a~\cite{leskela-2010}. Conditional stochastic orders have been
considered earlier more generally by R\"uschendorf \cite{ruschendorf},
following the work due to Whitt \cite{whitt,whitt-variability}.

The main result of this article (Theorem~\ref{th:strassenKernel})
extends Theorem~\ref{th:strassen} to parametrised collections of ordered pairs of probability distributions, in contrast with ordered sequences as in \cite{kellerer,hirsch-profeta-roynette-yor}. The proof of Theorem~\ref{th:strassenKernel} is based on measurability properties of related
set-valued mappings and an application of a measurable selection theorem of Kuratowski and Ryll-Nardzewski
\cite{kuratowski-ryll-nardzewski}. 
We are unaware of earlier results which would be directly applicable
in this context. However, similar results related to martingale
couplings have appeared recently in the context of optimal transport.
Beiglboeck and Juillet \cite{beiglboeck-juillet} consider the problem
of finding an optimal transport plan under the constraint that the
transport plan is a martingale. The work of Fontbona, Gu\'erin and
M\'el\'eard \cite{fontbona-guerin-meleard} has the most similarities with
our developments. With the notation above, they consider finding a
measurable optimal transport plan between $P_\theta$ and
$Q_\theta$. The work of Hobson \cite{hobson}, brought to our attention by a
referee, provides an explicit Skorokhod embedding of two univariate
convex ordered distributions. This embedding could be used to prove our
result in the scalar case.


\subsection{Outline of the rest of the paper} 

Section~\ref{sec:cond-conv} discusses the definitions and basic
properties related to conditional convex stochastic orders. The proofs
of Theorems~\ref{th:strassenKernel} and \ref{cor:cond-strassen-geni}
are given in Section~\ref{sec:proof} after analysing the measurability
of related set-valued mappings.

Our problem was initially motivated by applied work on so-called 
pseudo-marginal Markov chain Monte Carlo algorithms
\cite{andrieu-roberts}. In Section \ref{sec:application}, we 
summarise the application and discuss why a martingale coupling
is crucial in this context. We discuss in Section \ref{sec:extensions} some
extensions of our results and their applicability 
in the context of martingale optimal transport.



\section{Conditional convex orders} 
\label{sec:cond-conv} 

\subsection{Definitions and basic properties} 

We denote the $d$-dimensional Euclidean space by $\R^d$, the real line
by $\R^1=\R$ and the set of positive real numbers by $\R_+$. We follow
the convention that a number $x$ is \emph{positive} if $x \ge 0$ and a
function $f$ is \emph{increasing} if $f(x) \le f(y)$ for all $x \le
y$, with the usual coordinate-wise partial order, which holds if all
the coordinates are ordered by $x_i\le y_i$ for $1\le i\le d$. Unless
otherwise mentioned, all measures on a topological space will
considered as measures defined on the corresponding Borel
sigma-algebra. A random vector $X$ is called \emph{integrable} if $\E
|X| < \infty$. When $X$ and $Y$ are integrable, it is not hard to
verify that $X \lecx Y$ (resp.\ $X \leicx Y$) if and only if
\eqref{eq:ExpectationOrder} holds for all convex (resp.\ increasing
convex) $\phi: \R^d \to \R$ such that $\phi(X)$ and $\phi(Y)$ are
integrable.

The following definition extends the $Z$-conditional order in Section \ref{sec:intro} to an order conditioned on a sigma-algebra. 
Let $X$ and $Y$ be integrable random variables defined on a probability space
$(\Omega,\mA, \P)$, and let $\mF\subset\mA$ be a sigma-algebra.  We denote
$X \mid \mF \lecx Y \mid \mF$ (resp.\ $X \mid \mF \leicx Y \mid \mF$) if
\[
 \E (\phi(X) \mid \mF ) \le \E (\phi(Y) \mid \mF) \quad \text{almost surely}
\]
for all convex (resp.\ increasing convex) functions $\phi: \R^d \to \R$ such that $\phi(X)$ and $\phi(Y)$ are integrable.
When this is the case we say that $X$ is less than $Y$ in the
\emph{conditional convex} (resp.\ \emph{increasing convex})
\emph{order given $\mF$}. In the special case when $\mF = \sigma(Z)$
is generated by a random element $Z$ with values in some measurable
space, we write $X \mid Z \lecx Y \mid Z$ and $X \mid Z \leicx Y \mid Z$.

We state next a proposition which suggests that conditional
convex orders can be seen as interpolations between (unconditional) convex
orders and the corresponding strong stochastic orders.
\begin{proposition} 
Let $X$ and $Y$ be integrable random vectors defined on $(\Omega,\mA,\P)$ and let
$\mF \subset \mG$ be subsigma-algebras of $\mA$.
\begin{enumerate}[(i)]
\item \label{item:nested-icx} $X \mid \mG \leicx Y \mid \mG \implies
  X \mid \mF \leicx Y \mid \mF \implies X \leicx Y$.
\item \label{item:nested-cx} $X\mid \mG \lecxsp Y \mid \mG \implies
  X \mid \mF \lecxsp Y \mid \mF \implies X\lecxsp Y$.
\item \label{item:trivial-icx} $X\mid \mA \leicx Y \mid \mA \iff  X \le Y$ almost surely.
\item \label{item:trivial-cx} $X\mid \mA \lecxsp Y \mid \mA \iff X = Y$ almost surely.
\end{enumerate}
\end{proposition}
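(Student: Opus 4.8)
The plan is to reduce the whole statement to two elementary facts about conditional expectations: passing to a coarser conditioning $\sigma$-algebra preserves both conditional orders, and conditioning on $\mA$ itself collapses the conditional inequality to a pointwise almost sure one.

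First I would prove the monotonicity lemma: if $\mH_0 \subseteq \mH_1 \subseteq \mA$ are sub-$\sigma$-algebras and $X \mid \mH_1 \leicx Y \mid \mH_1$, then $X \mid \mH_0 \leicx Y \mid \mH_0$, and likewise with $\lecx$ in place of $\leicx$. Fix a convex (resp.\ increasing convex) $\phi : \R^d \to \R$ with $\phi(X), \phi(Y)$ integrable; crucially this class of admissible test functions does not depend on the conditioning $\sigma$-algebra. Applying $\E(\,\cdot \mid \mH_0)$ to the almost sure inequality $\E(\phi(X) \mid \mH_1) \le \E(\phi(Y) \mid \mH_1)$ and using monotonicity of conditional expectation together with the tower property $\E(\E(\,\cdot \mid \mH_1) \mid \mH_0) = \E(\,\cdot \mid \mH_0)$ gives $\E(\phi(X) \mid \mH_0) \le \E(\phi(Y) \mid \mH_0)$ almost surely. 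The first implications in parts (i) and (ii) are this lemma with $(\mH_0, \mH_1) = (\mF, \mG)$; the second implications are the case $\mH_0 = \{\emptyset,\Omega\}$, $\mH_1 = \mF$, because conditioning on the trivial $\sigma$-algebra is ordinary expectation, so that $X \mid \{\emptyset,\Omega\} \leicx Y \mid \{\emptyset,\Omega\}$ is by definition --- via the equivalent formulation recalled at the start of Section~\ref{sec:cond-conv} --- exactly $X \leicx Y$, and similarly for $\lecx$.

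For parts (iii) and (iv), the key observation is that $\phi(X)$ and $\phi(Y)$ are $\mA$-measurable, so $\E(\phi(X) \mid \mA) = \phi(X)$ almost surely; hence $X \mid \mA \leicx Y \mid \mA$ is equivalent to the statement that $\phi(X) \le \phi(Y)$ almost surely for every increasing convex $\phi$ with $\phi(X), \phi(Y)$ integrable, and similarly for $\lecx$ with the monotonicity requirement on $\phi$ dropped. The ``$\Leftarrow$'' directions are immediate: $X \le Y$ almost surely forces $\phi(X) \le \phi(Y)$ almost surely for increasing $\phi$, and $X = Y$ almost surely forces $\phi(X) = \phi(Y)$ almost surely for every $\phi$. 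For ``$\Rightarrow$'' in part (iii) I would test with the coordinate projections $\phi(x) = x_i$, which are linear (hence convex), increasing for the coordinate-wise order, and have $\phi(X), \phi(Y)$ integrable since $X$ and $Y$ are; this yields $X_i \le Y_i$ almost surely for each $i = 1, \dots, d$, and intersecting these finitely many almost sure events gives $X \le Y$ almost surely. For ``$\Rightarrow$'' in part (iv) I would test with both $\phi(x) = x_i$ and $\phi(x) = -x_i$ (again linear, hence convex, and integrable after composition), obtaining $X_i \le Y_i$ and $X_i \ge Y_i$ almost surely, hence $X_i = Y_i$ almost surely for each $i$, so $X = Y$ almost surely.

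There is no substantial obstacle here: the proposition is essentially bookkeeping with conditional expectations. The points that require a little care are that the admissible test functions should be taken from the class in the remark at the start of Section~\ref{sec:cond-conv} (convex, resp.\ increasing convex, functions $\phi : \R^d \to \R$ with $\phi(X), \phi(Y)$ integrable), so that linear functions such as $x \mapsto \pm x_i$ qualify; that this class is independent of the conditioning $\sigma$-algebra, which is exactly what makes the monotonicity lemma go through; and that the trivial $\sigma$-algebra is contained in every sub-$\sigma$-algebra, so that the unconditional orders are genuinely the $\mH_0 = \{\emptyset,\Omega\}$ instances of the conditional ones.
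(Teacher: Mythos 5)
Your proof is correct and follows essentially the same route as the paper: the tower property of conditional expectation gives both implications in (i) and (ii), and (iii), (iv) reduce to the observation that $\E(\phi(X)\mid\mA)=\phi(X)$ almost surely. The only cosmetic difference is in (iv), where the paper deduces $X=Y$ from (iii) applied to $(X,Y)$ and $(-X,-Y)$, while you test directly with the linear functions $x\mapsto \pm x_i$; both are valid one-line arguments.
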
 
\begin{proof} 
For \eqref{item:nested-icx} assume that $X \mid \mG \leicx Y \mid \mG$, and let $\phi$ be an increasing convex function such that $\phi(X)$ and $\phi(Y)$ are integrable. Then by the tower property of conditional expectations,
\[
 \E (\phi(X) \mid \mF)
 =  \E [ \E (\phi(X) \mid \mG) \mid \mF ]
 \le \E [ \E (\phi(Y) \mid \mG) \mid \mF ]
 = \E (\phi(Y) \mid \mF)
\]
almost surely. Therefore $X \mid \mF \leicx Y \mid \mF$. The second implication in \eqref{item:nested-icx} follows by writing the above inequality for $\mF = \{\emptyset,\Omega\}$.
Part 
\eqref{item:nested-cx} follows similarly.

Part \eqref{item:trivial-icx} is direct, and
for \eqref{item:trivial-cx}, notice that $X \mid \mA \lecx Y \mid \mA$
implies $X \mid \mA \leicx Y \mid \mA$ and $-X \mid \mA \leicx -Y \mid \mA$. 
By \eqref{item:trivial-icx} we conclude that $X=Y$ almost surely. 
The reverse implication is trivial.
\end{proof} 

\subsection{Countable characterisations} 

Instead of testing the expectations of all (increasing) convex functions, the
following lemma states that it is enough to restrict to a countable
family of such functions.
\begin{lemma} 
    \label{lem:countable-char} 
There exist countable sets of convex functions $\testconvex$ and 
increasing convex functions $\testiconvex$ such that
\begin{align*}
    X &\lecx Y &\iff &  &\E \phi(X) &\le \E \phi(Y) &&
    \text{for all $\phi\in\mathcal{C}$}, \\
    X &\leicx Y &\iff &  &\E \phi(X) &\le \E \phi(Y) &&
    \text{for all $\phi\in\mathcal{C}_+$}.
\end{align*}
\end{lemma}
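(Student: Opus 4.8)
The plan is to exhibit explicit countable families $\testconvex$ and $\testiconvex$ that are "dense enough" to detect the orders. The natural candidates are built from a fixed countable dense set of affine functions. Concretely, for the convex order take
\[
\testconvex \defeq \bigl\{\, x \mapsto (\inner{q}{x} - r)_+ \;:\; q \in \Q^d,\ r \in \Q \,\bigr\} \cup \{\, x \mapsto \pm x_i : 1 \le i \le d \,\},
\]
and for the increasing convex order take
\[
\testiconvex \defeq \bigl\{\, x \mapsto (\inner{q}{x} - r)_+ \;:\; q \in \Q_+^d,\ r \in \Q \,\bigr\} \cup \{\, x \mapsto x_i : 1 \le i \le d \,\}.
\]
The forward implications are trivial, since every function listed is convex (resp.\ increasing convex and bounded below after adding a constant; note that the inclusion of $\pm x_i$ forces equality of means, which is needed because the test functions $(\inner{q}{x}-r)_+$ alone only see $X \leicx Y$). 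The substantive direction is the converse.

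First I would reduce to the case of finite first moments: if the countable inequalities hold for the linear functions $\pm x_i$, then $\E X = \E Y$ (for $\lecx$) or $\E X \le \E Y$ coordinatewise (for $\leicx$), and in particular both $X$ and $Y$ are integrable once one of them is, so the general statement reduces to the integrable case already discussed in the text. Then the key step is to show that if $\E \phi(X) \le \E \phi(Y)$ for every $\phi$ in the countable family, the same holds for every convex (resp.\ increasing convex) $\phi \colon \R^d \to \R$ with $\phi(X), \phi(Y)$ integrable. I would do this by a standard approximation: any convex $\phi$ is the supremum of a countable family of affine minorants, and more usefully, any convex $\phi$ with $\E|\phi(X)|, \E|\phi(Y)| < \infty$ can be written as an increasing limit $\phi = \lim_k \phi_k$ of functions $\phi_k$ that are finite maxima of affine functions with rational coefficients (a "rational polyhedral" approximation from below), plus a correction controlled by the linear part. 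Each $\max_j \ell_j$ with $\ell_j(x) = \inner{q_j}{x} - r_j$ affine can in turn be expressed through the functions $(\inner{q}{x}-r)_+$ and linear functions using the identity $\max(a,b) = a + (b-a)_+$ iterated, so $\E[\max_j \ell_j(X)] \le \E[\max_j \ell_j(Y)]$ follows from the countable hypothesis by finitely many applications; then monotone convergence (after subtracting an affine minorant to make everything nonnegative, using integrability) upgrades this to $\E \phi(X) \le \E \phi(Y)$.

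The main obstacle I anticipate is the approximation of a general convex $\phi$ by rational polyhedral functions \emph{from below in a way compatible with the integrability hypothesis and with monotone convergence}: one must ensure the approximants $\phi_k$ are themselves integrable against both $X$ and $Y$ and that the errors $\phi - \phi_k$ are nonnegative and decrease to $0$ pointwise, so that dominated/monotone convergence applies. The clean way around this is to first subtract a supporting affine function $\ell$ of $\phi$ (which exists at any point of $\R^d$ since $\phi$ is finite and convex), reducing to $\psi \defeq \phi - \ell \ge 0$; the affine part is handled by the linear test functions $\pm x_i$, and for the nonnegative convex $\psi$ one uses that $\psi = \sup_n \psi_n$ for a countable increasing sequence of nonnegative rational polyhedral convex functions, whence $\E \psi(X) = \lim \E \psi_n(X) \le \lim \E \psi_n(Y) = \E\psi(Y)$ by monotone convergence. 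The increasing-convex case is identical but restricting to $q \in \Q_+^d$ throughout and keeping $\psi_n$ increasing, which is automatic when the affine pieces have positive slope. I would also remark that the same two families work simultaneously for all dimensions $d$ by the obvious coordinate projections, though stating them per fixed $d$ suffices here.
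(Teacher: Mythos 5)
Your overall strategy (a countable family plus a monotone-convergence upgrade) is the right one, but the family you chose is too small, and the step where this surfaces is the claimed reduction of finite maxima of affine functions to your hinge functions. The identity $\max(a,b)=a+(b-a)_+$ works for two affine pieces, but iterating it for three or more produces terms such as $\bigl(\ell_3-\max(\ell_1,\ell_2)\bigr)_+$, which is the positive part of an affine function minus a hinge (equivalently a minimum of two hinges); this is not of the form $(\inner{q}{x}-r)_+$, and the hypothesis gives you no inequality for it. In $d=1$ every finite max of affine functions is an affine function plus a finite sum of hinges, which is exactly why the univariate characterisation (Proposition~\ref{prop:order-char} and Remark~\ref{rem:order-char}) holds; in $\R^d$ with $d\ge 2$ this decomposition fails (for a generic max of three affine functions in the plane, the second-derivative measure is carried by three half-lines meeting at a point, whereas any finite sum of hinges has its second-derivative measure carried by full lines), so the gap is not a fixable bookkeeping issue.

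More fundamentally, testing against $\{x\mapsto(\inner{q}{x}-r)_+\}$ together with the linear functions is equivalent to requiring $\inner{q}{X}\lecx\inner{q}{Y}$ for every direction $q$, i.e.\ the convex ordering of all one-dimensional projections (the ``linear convex order''), which is strictly weaker than $\lecx$ on $\R^d$ for $d\ge 2$; this is precisely the point of the remark in Section~2 (citing M\"uller and Stoyan, p.~98) that such simple parametrisations are unavailable in the multivariate case. The same objection applies to your $\testiconvex$ with $q\in\Q_+^d$ versus $\leicx$. The paper's proof avoids the problem by taking $\testconvex$ to be the countable family of \emph{max-affine} functions $x\mapsto\max\{\alpha_1^Tx+\beta_1,\dots,\alpha_n^Tx+\beta_n\}$ with rational coefficients (and $\testiconvex$ its increasing members), approximating a convex $\phi$ by a countable supremum of rational affine minorants and passing to finite maxima via monotone convergence. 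Your approximation scheme survives essentially unchanged if you enlarge your family to these rational max-affine functions; what cannot be salvaged is the reduction of such maxima to hinge functions.
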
 
The proof of Lemma \ref{lem:countable-char} is given in Appendix
\ref{sec:countable-char}.

In the univariate case, Lemma \ref{lem:countable-char} follows 
from the following well-known characterisations 
\cite[Theorems 3.A.2 and 4.A.2]{shaked-shanthikumar}.
Here $(x)_+\defeq \max\{0,x\}$ denotes the positive part of a number $x$.
\begin{proposition} 
    \label{prop:order-char} 
    Let $X$ and $Y$ be integrable random variables. Then
\begin{align*}
   X&\lecxsp Y &\iff& &\E|X-t| &\le \E|Y-t| &&\text{for all $t\in\R$,}
   \\
   X&\leicx Y &\iff& &\E(X-t)_+&\le \E(Y-t)_+
   &&\text{for all $t\in\R$.}
\end{align*}
\end{proposition}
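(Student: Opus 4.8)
The plan is to prove Proposition~\ref{prop:order-char}, the two classical univariate characterisations. Since the result is symmetric in structure, I would treat the increasing convex case first and then obtain the convex case either by the same technique or by reduction. The key observation is that the test functions $t \mapsto (x-t)_+$ (and $t\mapsto |x-t| = (x-t)_+ + (t-x)_+$) form, up to affine functions, a spanning family for the cones of increasing convex (resp.\ convex) functions on $\R$.

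For the ``$\Rightarrow$'' directions, the implications are immediate: for each fixed $t$, the map $x\mapsto (x-t)_+$ is increasing and convex and nonnegative, and $x\mapsto|x-t|$ is convex and nonnegative, so $X\leicx Y$ and $X\lecx Y$ directly give the stated inequalities (integrability of $(X-t)_+$, $(Y-t)_+$, $|X-t|$, $|Y-t|$ follows from $\E|X|,\E|Y|<\infty$). The substance is in the ``$\Leftarrow$'' directions. First I would handle the increasing convex case. Assume $\E(X-t)_+\le\E(Y-t)_+$ for all $t\in\R$, and let $\phi:\R\to\R$ be increasing and convex with $\phi(X),\phi(Y)$ integrable. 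The standard fact is that such a $\phi$ admits an integral representation: there is a nonnegative measure $\rho$ on $\R$ and constants $a\ge 0$, $b\in\R$ with $\phi(x) = b + a\, x_{\vphantom{+}}\, \cdot[\text{bounded correction}] + \int (x-t)_+\,\rho(\ud t)$ — more precisely, writing $\phi'$ for the right-derivative (nondecreasing, nonnegative since $\phi$ is increasing), one has $\phi(x) = \phi(x_0) + \int_{x_0}^{x}\phi'(s)\,\ud s$, and $\phi'(s) = \phi'(-\infty) + \rho((-\infty,s])$ where $\rho = \ud\phi'$ is the (nonnegative, possibly infinite total mass) second-derivative measure. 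Carrying this through and applying Fubini/Tonelli (all integrands nonnegative after a harmless affine normalisation, which I would justify by subtracting the affine minorant $\phi'(-\infty)(x - x_0) + \phi(x_0)$ and noting affine increasing functions are handled by the $t\to-\infty$ limit of the hypothesis together with $\E X\le\E Y$, itself the $t\to-\infty$ limit after rearranging) yields $\E\phi(X) - \E\phi(Y) \le 0$. A cleaner route I might prefer: approximate a general increasing convex $\phi$ pointwise from below by finite nonnegative combinations of functions $x\mapsto(x-t_i)_+$ plus an affine increasing term, then pass to the limit by monotone convergence; this sidesteps handling infinite measures directly.

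For the convex case, assume $\E|X-t|\le\E|Y-t|$ for all $t$. Taking $t\to\pm\infty$ in $\E|X-t|\le\E|Y-t|$ (using $|X-t| = |t| - \sgn(t)\,X + o(1)$ type asymptotics, or more carefully $|X-t| - |Y-t|\to \mp(X-Y)$ boundedly) gives $\E X = \E Y$. Then for convex $\phi$ with $\phi(X),\phi(Y)$ integrable, write $\phi$ via its second-derivative measure as $\phi(x) = \alpha + \beta x + \int |x-t|\,\tilde\rho(\ud t)$ for suitable $\alpha,\beta$ and nonnegative $\tilde\rho$ (the representation $\phi(x) = \phi(0) + \phi'_+(0)x + \int (\text{kink terms})$, and $(x-t)_+ + (t-x)_+ = |x-t|$ lets one symmetrise); the affine part contributes equally to both sides since $\E X=\E Y$, and the kink part is controlled by the hypothesis. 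Again I would likely run this as a monotone approximation of $\phi$ by functions $\alpha_n + \beta_n x + \sum_i c_i|x-t_i|$ with $c_i\ge 0$.

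The main obstacle is the integrability bookkeeping in the integral representation: the second-derivative measure $\rho$ of $\phi$ need not be finite, and $\int(x-t)_+\,\rho(\ud t)$ can only be manipulated freely when one has verified the relevant quantities are finite or can be split into nonnegative pieces. The clean way around this is to avoid the representation with a possibly infinite measure and instead: (a) reduce to $\phi$ being the pointwise supremum of its supporting affine-plus-finitely-many-kinks minorants, (b) observe each such minorant is a finite nonnegative combination of $(\,\cdot\,-t_i)_+$ (resp.\ $|\,\cdot\,-t_i|$) plus an increasing affine (resp.\ affine) function, for which the inequality $\E(\cdot)(X)\le\E(\cdot)(Y)$ holds by hypothesis and the mean equality, and (c) pass to the supremum/limit by monotone convergence, which is legitimate precisely because $\phi(X),\phi(Y)$ are assumed integrable (providing the needed domination from above along an increasing sequence, or rather giving the finite limit of an increasing sequence of integrals). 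I would present the increasing convex case in full and remark that the convex case is entirely analogous after establishing $\E X=\E Y$ and symmetrising the kink functions.
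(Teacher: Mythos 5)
Your argument is correct, but note that the paper does not prove Proposition~\ref{prop:order-char} at all: it is quoted as a known result, with a citation to Shaked and Shanthikumar (Theorems 3.A.2 and 4.A.2), so there is no in-paper proof to match. What you supply is essentially the classical textbook argument, and your instinct to avoid the second-derivative-measure representation (with its possibly infinite mass) in favour of approximating $\phi$ from below by maxima of finitely many supporting lines is exactly the right way to keep the bookkeeping clean: such a piecewise linear function decomposes as a constant plus $a_1x$ plus a nonnegative combination of kinks $(x-t_i)_+$, the affine part is handled by $\E X\le\E Y$ (resp.\ $\E X=\E Y$), obtained correctly from the $t\to\mp\infty$ limits since $\E(t-X)_+\to 0$ and $\E(X-t)_+\to 0$ by dominated convergence, and the kink parts are exactly the hypothesis (in the convex case via $(x-t)_+=\tfrac12\bigl(|x-t|+(x-t)\bigr)$). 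One small correction of phrasing: what legitimises the passage to the limit is not the integrability of $\phi(X),\phi(Y)$ ``providing domination from above'', but the fact that the first approximant $\phi_1$ (affine plus finitely many kinks) is integrable under both laws because $X$ and $Y$ are integrable, so monotone convergence applies to $\phi_n-\phi_1\ge 0$; integrability of $\phi(X),\phi(Y)$ then only guarantees the limits are finite, and in fact the same argument yields the inequality for all nonnegative convex $\phi$ without any integrability caveat, which is the form in which the paper defines $\lecx$ and $\leicx$.
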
 

\begin{remark}
    \label{rem:order-char} 
    It is easy to see that we may
restrict to $t\in \Q$ in Proposition \ref{prop:order-char}, implying
that in the univariate case, we may take $\testconvex = \{x\mapsto
|x-t|\given t\in\Q\}$ and $\testiconvex = \{x\mapsto (x-t)_+ \given
t\in \Q\}$ in Lemma \ref{lem:countable-char}. 
\end{remark} 

The characterisations in Proposition \ref{prop:order-char} are often
easier to check in practice. In the insurance context, the quantity
$\E(X-t)_+$ has an interpretation as a stop-loss
\cite{borglin-keiding}. Unfortunately, such simple parametrisations
are not available in the multivariate case; see the discussion in
\cite[p.~98]{muller-stoyan}.
Both Lemma~\ref{lem:countable-char} and Proposition
extend naturally to the conditional case; see
Lemma~\ref{lem:cond-countable-char} and 
Proposition~\ref{prop:cond-simple-char}.


\subsection{Characterisations using regular conditional distributions} 

If $X$ is a real-valued random vector defined on a probability space
$(\Omega,\mA,\pr)$ and $\mF \subset \mA$ is a sigma-algebra, recall
that a \emph{regular conditional distribution of $X$ given $\mF$} is a
map $(\omega,B) \mapsto P^\mF_\omega(B)$ such that $P^\mF_\omega$ is a
probability measure on $\R^d$ for every $\omega$, and $\omega \mapsto
P^\mF_\omega(B)$ is a version of $\E ( \ind(X \in B) \mid \mF )$ for
every Borel set $B \subset \R^d$.  Hence $P^\mF$ is a random probability
measure, and the probability that $P^\mF$ assigns to a Borel set $B$
is an $\mF$-measurable random variable with expectation $\pr(X \in
B)$. If $P^\mF$ is a regular conditional distribution of a $X$ given
$\mF$, then
\begin{equation}
 \label{eq:disintegration}
 \E (\phi(X) \mid \mF ) = \int \phi(x) \, P^\mF(\ud x)
\end{equation}
almost surely for any $\phi$ such that $\phi(X)$ is integrable \cite[Thm~6.4]{kallenberg}.

The next result shows that conditional convex orders can be expressed
equivalently by the corresponding orders of the related conditional distributions.

\begin{proposition} 
    \label{prop:cond-cx-char} 
Assume that $X$ and $Y$ are integrable random vectors
defined on a probability space $(\Omega,\mA,\P)$, and let $\mF\subset\mA$ be a
sigma-algebra. Let $P^\mF$ and $Q^\mF$ stand for
regular conditional distributions of $X$ and $Y$ given $\mF$,
respectively.
Then, 
\begin{align}
X\mid \mF&\lecxsp Y\mid \mF & \iff&& P^\mF &\lecxsp Q^\mF
 && \text{almost surely,}\tag{i} 
 \label{eq:cond-cx-char} \\
X \mid \mF&\leicx Y\mid \mF &\iff&& P^\mF &\leicx Q^\mF 
&& \text{almost surely.}\tag{ii}
  \label{eq:cond-icx-char}
\end{align}
\end{proposition}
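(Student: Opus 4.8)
The plan is to reduce the statement to the countable characterisation of (increasing) convex orders. The key observation is that by the disintegration formula \eqref{eq:disintegration}, for any fixed $\phi$ with $\phi(X)$ integrable we have $\E(\phi(X)\mid\mF) = \int\phi\,\ud P^\mF$ almost surely, and similarly for $Y$. So the event $\{\int\phi\,\ud P^\mF \le \int\phi\,\ud Q^\mF\}$ coincides, up to a null set, with $\{\E(\phi(X)\mid\mF)\le\E(\phi(Y)\mid\mF)\}$. The quantifier "for all convex $\phi$" in the definition of $X\mid\mF\lecx Y\mid\mF$ is over an uncountable family, so intersecting the corresponding events could in principle destroy measurability or produce a null intersection-of-co-null issue; this is precisely what Lemma~\ref{lem:countable-char} is designed to circumvent. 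A subtle point is that Lemma~\ref{lem:countable-char} as stated is an unconditional statement about random vectors, so I first need its conditional analogue, which the excerpt promises as Lemma~\ref{lem:cond-countable-char}; I would invoke that, or prove it in two lines by noting that a nonnegative convex $\phi$ is an increasing limit of the functions in $\testconvex$ (and similarly for $\testiconvex$), so monotone convergence passes through the conditional expectation.

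Granting a countable testing family $\testconvex$ (resp.\ $\testiconvex$), here is how I would run the argument for part (i); part (ii) is identical with $\testconvex$ replaced by $\testiconvex$. For the direction "$\Leftarrow$": suppose $P^\mF\lecx Q^\mF$ almost surely, i.e.\ there is a $\P$-null set off which $\int\phi\,\ud P^\mF\le\int\phi\,\ud Q^\mF$ for all $\phi\in\testconvex$. Fix any convex $\phi:\R^d\to\R$ with $\phi(X),\phi(Y)$ integrable. By \eqref{eq:disintegration}, almost surely $\E(\phi(X)\mid\mF)=\int\phi\,\ud P^\mF$; I would like to say this is $\le\int\phi\,\ud Q^\mF=\E(\phi(Y)\mid\mF)$. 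For $\phi\in\testconvex$ this is immediate. For a general such $\phi$ I would approximate: since $\testconvex$ is a "rich enough" family (this is exactly what its construction in Appendix~\ref{sec:countable-char} guarantees — one can write a general nonnegative convex $\phi$ as a supremum of functions in $\testconvex$, and handle a general convex $\phi$ by adding a constant after noting integrability), apply monotone convergence inside the conditional expectations on both sides. This yields $X\mid\mF\lecx Y\mid\mF$.

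For the direction "$\Rightarrow$": suppose $X\mid\mF\lecx Y\mid\mF$. Then for each fixed $\phi\in\testconvex$, since $\phi\ge 0$, both $\phi(X)$ and $\phi(Y)$ have conditional expectations well-defined in $[0,\infty]$, and $\E(\phi(X)\mid\mF)\le\E(\phi(Y)\mid\mF)$ almost surely; applying \eqref{eq:disintegration} again (valid here since $\phi\ge0$, with the usual caveat that if $\phi(Y)$ is not integrable the inequality still holds in $[0,\infty]$), we get $\int\phi\,\ud P^\mF\le\int\phi\,\ud Q^\mF$ outside a null set $N_\phi$. Since $\testconvex$ is countable, $N\defeq\bigcup_{\phi\in\testconvex}N_\phi$ is null, and off $N$ we have $\int\phi\,\ud P^\mF\le\int\phi\,\ud Q^\mF$ simultaneously for all $\phi\in\testconvex$, i.e.\ $P^\mF\lecx Q^\mF$ by Lemma~\ref{lem:countable-char} (applied pointwise in $\omega$ to the deterministic measures $P^\mF_\omega, Q^\mF_\omega$).

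The main obstacle — really the only one — is the passage between the uncountable quantifier in the definition of the conditional order and the countable family: without care, "$\E(\phi(X)\mid\mF)\le\E(\phi(Y)\mid\mF)$ a.s.\ for all convex $\phi$" is a statement that, read naively, requires intersecting uncountably many co-null events, which need not be co-null. The resolution is entirely contained in the countable characterisation (Lemma~\ref{lem:countable-char} / its conditional version), so the body of the proof is just bookkeeping with null sets plus one application of disintegration and one of monotone convergence. I would also remark that $\testconvex,\testiconvex$ consist of nonnegative functions, which is what makes the conditional expectations unambiguously defined even before integrability of $\phi(X),\phi(Y)$ is known, and lets the "a.s." sets be chosen uniformly over the countable family.
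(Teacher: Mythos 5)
Your proposal is correct and follows essentially the same route as the paper: the backward direction is the disintegration identity applied to a fixed convex test function, and the forward direction fixes the null set via the countable family of Lemma~\ref{lem:countable-char}, disintegration for each member of the family, and a countable union of null sets.

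Two small cautions. First, you assume the functions in $\testconvex$ are nonnegative and use this to assert $\E(\phi(X)\mid\mF)\le\E(\phi(Y)\mid\mF)$ even when $\phi(X)$ might fail to be integrable; that inference is not licensed by the definition of $X\mid\mF\lecx Y\mid\mF$, which quantifies only over $\phi$ with $\phi(X)$ and $\phi(Y)$ integrable. In the paper $\testconvex$ consists of finite maxima of affine functions with rational coefficients, hence of linear growth (and not nonnegative), so $\phi(X),\phi(Y)$ are automatically integrable when $X,Y$ are, and no such caveat is needed --- this is what the paper's proof uses implicitly. Second, invoking Lemma~\ref{lem:cond-countable-char} at this point would be circular, since the paper derives that lemma from the present proposition; your actual argument avoids the issue by applying the unconditional Lemma~\ref{lem:countable-char} pointwise in $\omega$ to the deterministic measures $P^\mF_\omega$ and $Q^\mF_\omega$, which is exactly what the paper does.
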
 
\begin{proof} 
Assume first that $P^\mF \lecx Q^\mF$ almost surely. Let $\phi: \R^d \to \R$ be a convex
function such that $\phi(X)$ and $\phi(Y)$ are integrable. Then by~\eqref{eq:disintegration},
\[
 \E(\phi(Y)\mid \mF ) - \E(\phi(X)\mid \mF)
 \ = \ \int \phi(y) Q^\mF(\ud y) - \int \phi(x) P^\mF(\ud x)
 \ \ge \ 0
\]
almost surely. As a consequence, $X \mid \mF \lecx Y \mid \mF$.

To prove the converse in \eqref{eq:cond-cx-char}, assume that $X \mid \mF \lecx Y \mid \mF$.
Let $\Omega_0$ be the event that $P^\mF$ and $Q^\mF$ have
finite first moments. Then $\pr(\Omega_0)=1$. 
Recall Lemma \ref{lem:countable-char}, fix a function $f\in\testconvex$ 
and define
\[
 Z_f(\omega) = \int f(x) \, Q^\mF_\omega(\ud x) - \int f(x) \,
 P^\mF_\omega(\ud x)
\]
for $\omega \in \Omega_0$, and let $Z_f(\omega) = 0$ otherwise. 
Then by~\eqref{eq:disintegration}, 
\[
    Z_f = \E( f(Y) \mid \mF ) - \E( f(X) \mid \mF )\ge 0
\]
almost surely. This further implies that $\inf_{f \in \testconvex} Z_f \ge 0$ 
almost surely. We conclude from Lemma \ref{lem:countable-char} that 
$P^\mF \lecx Q^\mF$ almost surely.

The proof if \eqref{eq:cond-icx-char} is identical, except 
with functions $f\in\testiconvex$.
\end{proof} 

Let us now consider the case where the sigma-algebra $\mF = \sigma(Z)$
is generated by a random element $Z$ taking values in a general
measurable space $S$. Then for any random vector $X$
defined on the same probability space as $Z$ there exists
\cite[Thm~6.3]{kallenberg} a probability kernel $P$ from $S$ to $\R$
such that $\omega \mapsto P_{Z(\omega)}(B)$ is a version of $\E(\ind(X
\in B) \, | \, Z)$ for every Borel set $B \subset \R^d$. Such $P$ is
called a \emph{regular conditional distribution of $X$ given $Z$}, and
we note that $(\omega,B) \mapsto P_{Z(\omega)}(B)$ is a regular
conditional distribution of $X$ given $\sigma(Z)$ in the sense defined
in the beginning of the section. In this case the conditional convex
and increasing convex orders can be characterised as follows.

\begin{proposition} 
    \label{cor:cond-cx-char} 
Let $X$ and $Y$ be integrable random vectors and $Z$ a random
element in a measurable space $S$, all defined on a common probability space. If $P$ and $Q$ are regular conditional distributions of $X$ and $Y$ given $Z$, then
\begin{align}
 X \mid Z&\lecxsp Y\mid Z & \iff&& 
 P_\theta &\lecxsp Q_\theta && 
   \text{for $\mu$-almost every $\theta \in S$,} \tag{i} 
   \label{eq:cor-cond-cx-char} \\
 X \mid Z &\leicx Y\mid Z &\iff&& 
 P_\theta &\leicx Q_\theta && 
 \text{for $\mu$-almost every $\theta \in S$,} \tag{ii}
 \label{eq:cor-cond-icx-char}
\end{align}
where $\mu$ stands for the distribution of $Z$.
\end{proposition}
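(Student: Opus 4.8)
The plan is to derive Proposition~\ref{cor:cond-cx-char} from Proposition~\ref{prop:cond-cx-char} by transporting the almost-sure statement on $(\Omega,\mA,\P)$ to a $\mu$-almost-everywhere statement on $S$. First I would set $\mF = \sigma(Z)$ and, as recalled in the paragraph preceding the proposition, note that $(\omega,B)\mapsto P_{Z(\omega)}(B)$ and $(\omega,B)\mapsto Q_{Z(\omega)}(B)$ are regular conditional distributions of $X$ and $Y$ given $\mF$; call them $P^\mF$ and $Q^\mF$. Proposition~\ref{prop:cond-cx-char} then gives that $X\mid Z\lecx Y\mid Z$ holds if and only if $P^\mF_\omega\lecx Q^\mF_\omega$ for $\P$-almost every $\omega$, i.e.\ $P_{Z(\omega)}\lecx Q_{Z(\omega)}$ for $\P$-almost every $\omega$, and similarly for $\leicx$.

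Next I would turn the event $N=\{\omega : P_{Z(\omega)}\nlecx Q_{Z(\omega)}\}$ into a statement about $\mu=\P\circ Z^{-1}$. By Lemma~\ref{lem:countable-char}, the set $B=\{\theta\in S : P_\theta\nlecx Q_\theta\}=\bigcup_{f\in\testconvex}\{\theta : \int f\,\ud P_\theta > \int f\,\ud Q_\theta\}$ is a countable union of sets of the form $\{\theta : g(\theta)>0\}$ where $g(\theta)=\int f\,\ud P_\theta-\int f\,\ud Q_\theta$ is measurable in $\theta$ (each $\theta\mapsto\int f\,\ud P_\theta$ is measurable, being a monotone limit of integrals of simple functions against the kernel $P$); hence $B$ is a measurable subset of $S$. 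Then $N=Z^{-1}(B)$, so $\P(N)=\mu(B)$, and $\P(N)=0$ is equivalent to $\mu(B)=0$, which is exactly "$P_\theta\lecx Q_\theta$ for $\mu$-almost every $\theta$." The same argument with $\testiconvex$ in place of $\testconvex$ handles the increasing convex case.

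The only mild technical point, and the one I would be careful about, is the measurability of $\theta\mapsto\int f\,\ud P_\theta$ for $f\in\testconvex\cup\testiconvex$: these $f$ are nonnegative convex (or increasing convex) functions, possibly unbounded, so one argues by writing $f$ as an increasing limit of bounded measurable functions $f\wedge n$, using measurability of $\theta\mapsto\int (f\wedge n)\,\ud P_\theta$ (standard for probability kernels, via the monotone class theorem starting from indicators) and monotone convergence; finiteness of the integrals holds since $P$ has — or more precisely, we only need that the difference $g$ is a well-defined extended-real measurable function, and on the $\mu$-full set where $P_\theta,Q_\theta$ have finite first moments it is real-valued, which suffices to make $B$ measurable. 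Everything else is bookkeeping: the pushforward identity $\P(Z^{-1}(B))=\mu(B)$ and the equivalences from the two cited results. I do not expect any genuine obstacle here; the proposition is essentially a restatement of Proposition~\ref{prop:cond-cx-char} in kernel language, and the proof should be short.
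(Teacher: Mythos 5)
Your proposal is correct and follows essentially the same route as the paper: reduce to Proposition~\ref{prop:cond-cx-char} via the regular conditional distributions $P^\mF_\omega = P_{Z(\omega)}$, $Q^\mF_\omega = Q_{Z(\omega)}$, use the countable family of Lemma~\ref{lem:countable-char} to see that $\{\theta : P_\theta \lecx Q_\theta\}$ (you work with its complement, which is the same thing) is a measurable subset of $S$, and then translate the $\P$-almost-sure statement into a $\mu$-almost-everywhere one via the pushforward identity. Your explicit attention to the measurability of $\theta\mapsto\int f\,\ud P_\theta$ and to the $\mu$-full set where $P_\theta,Q_\theta$ have finite first moments is sound and, if anything, slightly more careful than the paper's wording.
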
 
\begin{proof} 
Let $\mF = \sigma(Z)$ and denote $P^\mF_\omega(B) = P_{Z(\omega)}(B)$
and $Q^\mF_\omega(B) = Q_{Z(\omega)}(B)$ for $\omega \in \Omega$ and
Borel sets $B \subset \R$. Then $P^\mF$ and $Q^\mF$ are regular
conditional distributions of $X$ and $Y$ given $\mF$, respectively.
Let $ S_0 = \{ \theta \in S: P_\theta \lecx Q_\theta\}. $ The argument
used in the proof of Proposition~\ref{prop:cond-cx-char} shows that
\begin{equation*}
 S_0 = \bigcap_{f \in \testconvex} \left\{ \theta \in S: \int f(x) \,
   P_\theta(\ud x) \le \int f(y) \, Q_\theta(\ud y) \right\},
\end{equation*}
from which we conclude that $S_0$ is a measurable subset of $S$.
Proposition~\ref{prop:cond-cx-char} now tells us that $X \mid Z
\lecxsp Y\mid Z$ if and only if $P_{Z(\omega)} \lecx Q_{Z(\omega)}$
for $\pr$-almost every $\omega$. The latter condition is equivalent to
requiring that $\mu(S_0) = \pr(Z \in S_0) = 1$. Hence we have proved
claim \eqref{eq:cor-cond-cx-char}. The proof of claim
\eqref{eq:cor-cond-icx-char} is analogous.
\end{proof}

As another corollary of Proposition \ref{prop:cond-cx-char} we obtain
the following conditional version of
Lemma~\ref{lem:countable-char}.

\begin{lemma} 
    \label{lem:cond-countable-char} 
 Let $X$ and $Y$ be integrable random vectors defined on a
 probability space $(\Omega,\mA,\P)$, and let $\mF \subset \mA$ be a 
 sigma-algebra. Then, there exist countable sets of convex functions 
 $\testconvex$ and $\testiconvex$ such that
\begin{align}
   X\mid \mF &\lecxsp Y\mid \mF &\iff& &
   \E\big[f(X)\bigmid \mF \big] &\le \E\big[f(Y)\bigmid \mF \big],
   &&f\in\testconvex, 
   \tag{i}
   \label{eq:cond-cx-countable} \\
   X \mid \mF &\leicx Y \mid \mF &\iff& &
   \E\big[f(X)\bigmid  \mF \big] 
   &\le \E\big[f(Y)\bigmid \mF \big],
   &&f\in\testiconvex,
   \tag{ii}
   \label{eq:cond-icx-countable}
\end{align}
where the inequalities on the right hold almost surely
for any $f\in\testconvex$ or $f\in\testiconvex$.
\end{lemma}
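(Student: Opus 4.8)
The plan is to take for $\testconvex$ and $\testiconvex$ the very same countable families furnished by Lemma~\ref{lem:countable-char}, and to deduce the conditional statement by transporting the pointwise characterisation of Lemma~\ref{lem:countable-char} through regular conditional distributions. First I would fix regular conditional distributions $P^\mF$ and $Q^\mF$ of $X$ and $Y$ given $\mF$ (these exist since $X$ and $Y$ take values in the Polish space $\R^d$). As already used in the proof of Proposition~\ref{prop:cond-cx-char}, the functions in $\testconvex$ and $\testiconvex$ can be chosen so that $f(X)$ and $f(Y)$ are integrable for every such $f$ (in the univariate case they are the Lipschitz functions $x\mapsto|x-t|$ and $x\mapsto(x-t)_+$, $t\in\Q$, cf.\ Remark~\ref{rem:order-char}); hence the disintegration identity~\eqref{eq:disintegration} yields $\int f\,\ud P^\mF = \E\big[f(X)\bigmid\mF\big]$ and $\int f\,\ud Q^\mF = \E\big[f(Y)\bigmid\mF\big]$ almost surely, for each fixed $f$.

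The forward implication in~\eqref{eq:cond-cx-countable} is then immediate from the definition of $X\mid\mF\lecx Y\mid\mF$, since each $f\in\testconvex$ is convex with $f(X),f(Y)$ integrable. For the converse, suppose the inequality $\E\big[f(X)\bigmid\mF\big]\le\E\big[f(Y)\bigmid\mF\big]$ holds almost surely for every $f\in\testconvex$. Because $\testconvex$ is countable, there is a single $\P$-null set $N$ (which we also enlarge to contain the null event on which $P^\mF_\omega$ or $Q^\mF_\omega$ fails to have finite first moments) such that for every $\omega\notin N$ one has $\int f\,\ud P^\mF_\omega\le\int f\,\ud Q^\mF_\omega$ simultaneously for all $f\in\testconvex$. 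Applying Lemma~\ref{lem:countable-char} pointwise to the pair of measures $(P^\mF_\omega,Q^\mF_\omega)$ for each $\omega\notin N$ gives $P^\mF_\omega\lecx Q^\mF_\omega$, i.e.\ $P^\mF\lecx Q^\mF$ almost surely, and Proposition~\ref{prop:cond-cx-char} then yields $X\mid\mF\lecx Y\mid\mF$. The proof of~\eqref{eq:cond-icx-countable} is verbatim the same with $\testiconvex$ and $\leicx$ in place of $\testconvex$ and $\lecx$.

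The step I expect to require the most care is the interchange of quantifiers, namely passing from ``for every $f\in\testconvex$, the inequality holds almost surely'' to ``almost surely, the inequality holds for every $f\in\testconvex$''. This is exactly where the countability of $\testconvex$ and $\testiconvex$ is essential, and it is the reason the lemma is naturally phrased as a corollary of Proposition~\ref{prop:cond-cx-char}; testing against the full uncountable class of convex functions would not permit this swap. One should also note that $\testconvex$ and $\testiconvex$ must be chosen universally, independently of $\mF$, $X$ and $Y$, which is automatic since the families of Lemma~\ref{lem:countable-char} do not depend on any random variables.
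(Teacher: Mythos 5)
Your proposal is correct and follows essentially the same route as the paper: use the countable families from Lemma~\ref{lem:countable-char}, pass to regular conditional distributions via the disintegration identity~\eqref{eq:disintegration}, exploit countability to collect a single null set outside of which all inequalities hold simultaneously, apply Lemma~\ref{lem:countable-char} pointwise, and conclude with Proposition~\ref{prop:cond-cx-char}. Your additional care about enlarging the null set to ensure finite first moments of $P^\mF_\omega$ and $Q^\mF_\omega$ is a sound (and slightly more explicit) handling of a point the paper leaves implicit.
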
 
\begin{proof} 
The forward directions of both claims follow trivially, as 
$f\in\testconvex$ are convex and $f\in\testiconvex$ are increasing
convex functions.

For the opposite direction, assume that the inequality on the right of
\eqref{eq:cond-cx-countable}
holds for all $f\in\testconvex$ almost surely. Let $P^\mF$ and $Q^\mF$ 
be regular
conditional distributions of $X$ and $Y$ given $\mF$, respectively. Then
\begin{equation}
 \label{eq:cxCondReg}
 \int f(x)  P^\mF(\ud x) \le \int f(y) Q^\mF(\ud y)
\end{equation}
almost surely for all $f\in\testconvex$. Let $\Omega_0$ be 
the event that \eqref{eq:cxCondReg}
holds for all $f\in\testconvex$, then $\pr(\Omega_0) = 1$. 
Lemma \ref{lem:countable-char} hence implies that 
$P^\mF_\omega \lecx Q^\mF_\omega$ for all $\omega \in \Omega_0$, and Proposition \ref{prop:cond-cx-char} shows that $X \mid \mF \lecx Y\mid \mF$.
The opposite direction of claim \eqref{eq:cond-icx-countable} 
is proved in a similar way.
\end{proof} 

We also state the conditional version of 
Proposition~\ref{prop:order-char}, which follows from Lemma
\ref{lem:cond-countable-char} as suggested in Remark \ref{rem:order-char}.
\begin{proposition} 
    \label{prop:cond-simple-char} 
 Let $X$ and $Y$ be integrable random variables defined on a
 probability space $(\Omega,\mA,\P)$, and let $\mF \subset \mA$ be a 
 sigma-algebra. Then,
\begin{align*}
   X\mid \mF &\lecxsp Y\mid \mF &\iff& &
   \E\big[|X-t|\;\big|\; \mF \big] &\le \E\big[|Y-t|\;\big|\; \mF \big],
   \\
   X \mid \mF &\leicx Y \mid \mF &\iff& &
   \E\big[(X-t)_+\;\big|\; \mF \big] &\le \E\big[(Y-t)_+\;\big|\; \mF \big],
\end{align*}
where the inequalities on the right hold almost surely
for any $t\in\R$. 
\end{proposition}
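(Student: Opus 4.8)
The plan is to deduce Proposition~\ref{prop:cond-simple-char} from Lemma~\ref{lem:cond-countable-char} in exactly the way Remark~\ref{rem:order-char} suggests for the unconditional case. The forward directions are immediate: the functions $x \mapsto |x-t|$ are convex and the functions $x \mapsto (x-t)_+$ are increasing convex (and all are integrable against integrable $X$, $Y$, since $|x-t| \le |x| + |t|$), so if $X \mid \mF \lecx Y \mid \mF$ or $X \mid \mF \leicx Y \mid \mF$, the stated conditional inequalities hold almost surely by the very definition of the conditional orders.

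For the converse directions, I would argue via the regular conditional distributions. Let $P^\mF$ and $Q^\mF$ be regular conditional distributions of $X$ and $Y$ given $\mF$. By \eqref{eq:disintegration}, the assumed a.s.\ inequality $\E[\,|X-t| \mid \mF\,] \le \E[\,|Y-t| \mid \mF\,]$ translates into $\int |x-t| \, P^\mF(\ud x) \le \int |y-t| \, Q^\mF(\ud y)$ almost surely, for each fixed $t \in \R$. Intersecting over the countable set $t \in \Q$, there is an event $\Omega_0$ of full probability on which this inequality holds simultaneously for all rational $t$; by continuity of $t \mapsto \int |x-t|\,\pi(\ud x)$ for any probability measure $\pi$ with finite first moment (uniform continuity, in fact, as it is $1$-Lipschitz), it then holds for all real $t$ on $\Omega_0$. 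Proposition~\ref{prop:order-char} applied pointwise on $\Omega_0$ gives $P^\mF_\omega \lecx Q^\mF_\omega$ for every $\omega \in \Omega_0$, and Proposition~\ref{prop:cond-cx-char}\eqref{eq:cond-cx-char} then yields $X \mid \mF \lecx Y \mid \mF$. The increasing convex case is handled identically, using $x \mapsto (x-t)_+$, its $1$-Lipschitz continuity in $t$, and the second equivalence in Proposition~\ref{prop:order-char}. Alternatively, one can avoid the explicit continuity argument by invoking Remark~\ref{rem:order-char}: since $\testconvex = \{x \mapsto |x-t| : t \in \Q\}$ and $\testiconvex = \{x \mapsto (x-t)_+ : t \in \Q\}$ are valid choices of the countable test families in Lemma~\ref{lem:cond-countable-char}, the present proposition is just that lemma spelled out for these particular families.

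The only point requiring a small amount of care is the reduction from all $t \in \R$ to $t \in \Q$ on the right-hand side: the hypothesis as literally stated quantifies over all real $t$, so for the forward direction nothing is needed, while for the converse one wants to know that the weaker-looking hypothesis with rational $t$ (which is what the proof actually uses) already suffices — this is precisely the content of Remark~\ref{rem:order-char}, namely that $t \mapsto \E[(X-t)_+ \mid \mF]$ and $t \mapsto \E[|X-t| \mid \mF]$ can each be handled through a countable dense set of thresholds because of Lipschitz continuity in $t$. I do not anticipate any genuine obstacle here; the proposition is a routine corollary, and the proof is essentially a transcription of the argument already given for Lemma~\ref{lem:cond-countable-char} with the test families made explicit.

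\begin{proof}
As noted in Remark~\ref{rem:order-char}, in the univariate case we may take $\testconvex = \{x \mapsto |x-t| : t \in \Q\}$ and $\testiconvex = \{x \mapsto (x-t)_+ : t \in \Q\}$ in Lemma~\ref{lem:cond-countable-char}, so both equivalences with $t$ ranging over $\Q$ follow directly from that lemma.

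It remains to check that the right-hand conditions with $t \in \R$ are equivalent to those with $t \in \Q$. For the forward direction this is immediate. For the converse, suppose $\E[\,|X-t| \mid \mF\,] \le \E[\,|Y-t| \mid \mF\,]$ almost surely for every $t \in \Q$; passing to regular conditional distributions $P^\mF$ and $Q^\mF$ of $X$ and $Y$ given $\mF$ and using \eqref{eq:disintegration}, there is an event $\Omega_0$ with $\pr(\Omega_0) = 1$ on which $\int |x-t|\,P^\mF(\ud x) \le \int |y-t|\,Q^\mF(\ud y)$ for all $t \in \Q$ and both $P^\mF$, $Q^\mF$ have finite first moments. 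For a probability measure $\pi$ with finite first moment the map $t \mapsto \int |x-t|\,\pi(\ud x)$ is $1$-Lipschitz, hence continuous; therefore on $\Omega_0$ the inequality extends to all $t \in \R$, and by \eqref{eq:disintegration} again, $\E[\,|X-t| \mid \mF\,] \le \E[\,|Y-t| \mid \mF\,]$ almost surely for every $t \in \R$. The increasing convex case is identical, with $x \mapsto |x-t|$ replaced by $x \mapsto (x-t)_+$, which is likewise $1$-Lipschitz in $t$.
\end{proof}
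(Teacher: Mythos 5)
Your proof is correct and follows the paper's intended route exactly: the paper derives Proposition~\ref{prop:cond-simple-char} from Lemma~\ref{lem:cond-countable-char} with the rational-threshold test families $\{x\mapsto|x-t|\}$, $\{x\mapsto(x-t)_+\}$ of Remark~\ref{rem:order-char}, which is precisely what you do. Your additional Lipschitz argument extending the inequalities from $t\in\Q$ to $t\in\R$ is not strictly needed (the forward direction already follows directly from the definition of the conditional orders, and the converse only requires rational $t$), but it is correct and harmless.
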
 



\section{Proofs of the main results} 
\label{sec:proof} 

This section is devoted to proving Theorems~\ref{th:strassenKernel}
and~\ref{cor:cond-strassen-geni}. Our proof of
Theorem~\ref{th:strassenKernel} is based on a measurable selection
theorem of Kuratowski and Ryll-Nardzewski 
\cite{kuratowski-ryll-nardzewski}. To apply it, we first need to
analyse the regularity of coupling constructions and probability
kernels with respect to suitable measurable structures on spaces of
probability measures. Because convex orders are essentially restricted
to probability measures with finite first moments, our natural choice
is to consider Borel sigma-algebras generated by the Wasserstein
metric which will be discussed in Section~\ref{sec:Wasserstein}. A
similar measurability analysis for the topology corresponding to
convergence in distribution has been carried out in
\cite{leskela-2010}. The space of martingale distributions with
respect to the Wasserstein metric is analysed in
Section~\ref{sec:martingales}, whereas Section~\ref{sec:set-valued}
establishes crucial measurability properties of probability kernels
and marginalising maps.  Section~\ref{sec:proof-conclusion} concludes
the proof of Theorem \ref{th:strassenKernel} and
Section~\ref{sec:proof-other} concludes the proof of
Theorem~\ref{cor:cond-strassen-geni}.

\subsection{Wasserstein metric}
\label{sec:Wasserstein} 

For a probability measure $\mu$ on $S$
and a measurable function $f:S \to S'$,
we denote by $f_\# \mu = \mu \circ f^{-1}$ the pushforward measure of 
$\mu$ by $f$. When $S = S_1 \times \cdots \times S_d$, we denote the $i$-th coordinate projection
by $\pi^i(x_1,\dots,x_d) \defeq x_i$. Then $\pi^i_\# \mu$ equals the
$i$-th marginal distribution of $\mu$.
The set of couplings of $\mu \in \mP(S_1)$ and $\nu \in \mP(S_2)$ will be denoted by
\begin{equation*}
  \Gamma(\mu,\nu) \defeq \{\lambda\in
  \mP(S_1 \times S_2)\given 
  \pi_\#^1 \lambda = \mu,\, 
  \pi_\#^2 \lambda = \nu\}.
\end{equation*}

Let us recall the definition of the Wasserstein
(a.k.a.~Kantorovich-Rubinstein) metric between two
probability measures $\mu, \nu \in \mP_1(\R^d)$:
\[
 d_W(\mu,\nu) \defeq \min_{\lambda \in \Gamma(\mu,\nu)} \int_{\R^d
   \times \R^d} |x-y| \, \lambda(\ud x\times\ud y).
\]
The minimum is attained by lower semicontinuity properties and the
relative compactness of $\Gamma(\mu,\nu)$, and the map $d_W$ is a
metric on $\mP_1(\R^d)$ \cite[Section 7.1]{ambrosio-gigli-savare}.

The space $\mP_1(\R^d)$ equipped with the Wasserstein metric is a
complete separable metric space \cite[Proposition
7.1.5]{ambrosio-gigli-savare}.
The same proposition also shows
that $d_W(\mu_n,\mu) \to 0$ if and only if $\mu_n \to \mu$ 
in distribution and
$(\mu_n)$ is uniformly integrable in the sense that
\[
 \sup_{n} \int_{\R^d} |x| \charfun{|x| > t} \mu_n(\ud x) \to 0
 \qquad \text{as $t \to \infty$}.
\]
Hereafter, we equip 
$\mP_1(\R^d)$ by the topology induced by $d_W$.

The following results are probably well-known in transport theory,
but we were unable to find them in the literature.
We provide proofs for the reader's convenience.

\begin{lemma} 
\label{the:ContinuousProjection} 
The $i$-th marginal map $\pi_\#^i: \mP_1\big((\R^d)^n\big) \to \mP_1(\R^d)$ is
continuous for all~$i$.
\end{lemma}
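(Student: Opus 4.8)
The plan is to show that the $i$-th marginal map $\pi_\#^i$ is continuous by exploiting the Wasserstein-contraction property of pushforward under a $1$-Lipschitz map. First I would observe that the coordinate projection $\pi^i : (\R^d)^n \to \R^d$ is $1$-Lipschitz, since $|\pi^i(x) - \pi^i(y)| = |x_i - y_i| \le |x - y|$ for the Euclidean norm on $(\R^d)^n$ (with equality when all other coordinates agree). The key elementary fact is that for any $1$-Lipschitz map $g : S \to S'$ between Euclidean spaces and any $\mu,\nu \in \mP_1(S)$, one has $d_W(g_\#\mu, g_\#\nu) \le d_W(\mu,\nu)$; indeed, if $\lambda \in \Gamma(\mu,\nu)$ is an optimal coupling, then $(g \times g)_\# \lambda \in \Gamma(g_\#\mu, g_\#\nu)$ and $\int |g(x) - g(y)| \, \lambda(\ud x \times \ud y) \le \int |x-y| \, \lambda(\ud x \times \ud y) = d_W(\mu,\nu)$.

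Applying this with $g = \pi^i$ gives $d_W(\pi_\#^i \mu, \pi_\#^i \nu) \le d_W(\mu,\nu)$ for all $\mu,\nu \in \mP_1\big((\R^d)^n\big)$, so $\pi_\#^i$ is in fact $1$-Lipschitz, hence continuous. I should also briefly note that $\pi_\#^i$ indeed maps into $\mP_1(\R^d)$: if $\int |x| \, \mu(\ud x) < \infty$ on $(\R^d)^n$ then $\int |x_i| \, \mu(\ud x) < \infty$, so $\pi_\#^i \mu \in \mP_1(\R^d)$ as required. This is genuinely all that is needed.

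There is no real obstacle here; the only point requiring a moment's care is fixing the norm convention on $(\R^d)^n$ so that the coordinate projections are $1$-Lipschitz — any of the standard choices ($\ell^2$, $\ell^1$, or $\ell^\infty$ product norms) works, and since all these norms are bi-Lipschitz equivalent, the induced topology (and hence the statement) does not depend on the choice; one simply picks, say, $|x| = \big(\sum_{j=1}^n |x_j|^2\big)^{1/2}$ to make the Lipschitz constant exactly $1$. I would write the proof in two or three sentences along these lines, stating the $1$-Lipschitz-pushforward lemma inline and then specializing to $g = \pi^i$.
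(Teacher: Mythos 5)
Your argument is correct, and it takes a genuinely different (and more quantitative) route than the paper. The paper does not work with the metric directly: it invokes the characterisation of Wasserstein convergence as convergence in distribution plus uniform integrability (Proposition~7.1.5 of Ambrosio--Gigli--Savar\'e), and then checks both properties for the marginals separately --- weak convergence by testing against $f\circ\pi_i$ for bounded continuous $f$, and uniform integrability via the same inequality $|x_i|\ind(|x_i|>t)\le |x|\ind(|x|>t)$ that underlies your Lipschitz bound. Your proof instead establishes the stronger statement that $\pi_\#^i$ is $1$-Lipschitz for $d_W$, using the standard contraction property of pushforward under a $1$-Lipschitz map: if $\lambda\in\Gamma(\mu,\nu)$ is optimal (the minimum is attained, as the paper notes, though near-optimal couplings would suffice), then $(\pi^i\times\pi^i)_\#\lambda\in\Gamma(\pi_\#^i\mu,\pi_\#^i\nu)$ and $\int|x_i-y_i|\,\lambda(\ud x\times\ud y)\le\int|x-y|\,\lambda(\ud x\times\ud y)=d_W(\mu,\nu)$. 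What your approach buys is a shorter, purely metric proof with an explicit modulus of continuity and no appeal to the sequential characterisation; what the paper's approach buys is that the same ``weak convergence plus uniform integrability'' template is reused verbatim in the subsequent closedness arguments (Lemmas on $\Gamma(\mu,\nu)$ compactness and on $\mM_2$, $\mM_2^*$ being closed), so the authors get uniformity of exposition. Your remark on the norm convention is the right point of care and is handled correctly: with the Euclidean norm on $(\R^d)^n$ one has $|\pi^i(x)-\pi^i(y)|\le|x-y|$, and the statement is norm-independent up to bi-Lipschitz equivalence.
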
 
\begin{proof} 
Assume that $\mu_n \to \mu \in \mP_1\big((\R^d)^n\big)$. Then 
$\mu_n \to \mu$ in
distribution and $(\mu_n)$ is uniformly integrable. If $f:\R^d \to \R$
is continuous and bounded, then so is $f \circ \pi_i: (\R^d)^n \to \R$.
Therefore, $(\pi_\#^i \mu_n)(f) = \mu_n( f \circ \pi_i) \to \mu(f
\circ \pi_i) = (\pi_\#^i \mu)(f)$. Thus, $\pi_\#^i \mu^n \to \pi_\#^i
\mu$ in distribution. It is also easy to see that $(\pi_\#^i \mu_n)$
is uniformly integrable because
\begin{align*}
 \int_{\R^d} |x_i| \charfun{|x_i|>t}  \pi_\#^i \mu_n(\ud x_i)
 &= \int_{(\R^d)^n} |x_i| \charfun{|x_i|>t}  \mu_n(\ud x) \\
 &\le \int_{(\R^d)^n} |x| \charfun{|x|>t}  \mu_n(\ud x).\qedhere
\end{align*}
\end{proof} 

\begin{lemma}
\label{the:CouplingCompact} 
For any $\mu,\nu \in \mP_1(\R^d)$, the set of couplings 
$\Gamma(\mu,\nu)$ is compact in $\mP_1(\R^d\times\R^d)$.
\end{lemma}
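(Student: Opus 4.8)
The plan is to exploit the characterisation of $d_W$-convergence recalled above — that $d_W(\lambda_n,\lambda)\to0$ iff $\lambda_n\to\lambda$ in distribution and $(\lambda_n)$ is uniformly integrable — together with Prokhorov's theorem and the continuity of the marginal maps (Lemma~\ref{the:ContinuousProjection} with $n=2$). Since $\mP_1(\R^d\times\R^d)$ is metric, it suffices to show that $\Gamma(\mu,\nu)$ is closed and that every sequence in it has a $d_W$-convergent subsequence. Closedness is immediate: writing $\pi^1,\pi^2$ for the coordinate projections of $\R^d\times\R^d$ onto $\R^d$, Lemma~\ref{the:ContinuousProjection} gives that $\lambda\mapsto\pi^i_\#\lambda$ is continuous, so $\Gamma(\mu,\nu)=(\pi^1_\#)^{-1}(\{\mu\})\cap(\pi^2_\#)^{-1}(\{\nu\})$ is closed.

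For sequential relative compactness I would take a sequence $(\lambda_n)\subset\Gamma(\mu,\nu)$ and first check tightness: given $\varepsilon>0$, choose compact $K_1,K_2\subset\R^d$ with $\mu(K_1^c)<\varepsilon/2$ and $\nu(K_2^c)<\varepsilon/2$; then $K_1\times K_2$ is compact and
\[
 \lambda_n\big((K_1\times K_2)^c\big)\ \le\ \lambda_n(K_1^c\times\R^d)+\lambda_n(\R^d\times K_2^c)\ =\ \mu(K_1^c)+\nu(K_2^c)\ <\ \varepsilon
\]
for all $n$. Prokhorov's theorem then yields a subsequence $\lambda_{n_k}$ converging in distribution to some $\lambda$, and since $\pi^1_\#\lambda_{n_k}=\mu$ and $\pi^2_\#\lambda_{n_k}=\nu$ for every $k$, continuity of the marginals forces $\lambda\in\Gamma(\mu,\nu)$; in particular $\lambda\in\mP_1(\R^d\times\R^d)$ because $\int|(x,y)|\,\lambda(\ud x\,\ud y)\le\int|x|\,\mu(\ud x)+\int|y|\,\nu(\ud y)<\infty$.

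The remaining, and main, step is to show that $\Gamma(\mu,\nu)$ is uniformly integrable, which upgrades the convergence of $\lambda_{n_k}$ from convergence in distribution to convergence in $d_W$. Using $|(x,y)|\le|x|+|y|$ and $\{|(x,y)|>t\}\subseteq\{|x|>t/2\}\cup\{|y|>t/2\}$, for every $\lambda\in\Gamma(\mu,\nu)$ the quantity $\int|(x,y)|\,\charfun{|(x,y)|>t}\,\lambda(\ud x\,\ud y)$ is dominated by the sum of $\int|x|\charfun{|x|>t/2}\,\mu(\ud x)$, $\int|y|\charfun{|y|>t/2}\,\nu(\ud y)$ and the two cross terms $\int|x|\charfun{|y|>t/2}\,\lambda(\ud x\,\ud y)$ and $\int|y|\charfun{|x|>t/2}\,\lambda(\ud x\,\ud y)$. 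The first two tend to $0$ as $t\to\infty$ because $\mu,\nu$ have finite first moments. The cross terms are the delicate point: for the first of them I would truncate, writing for any $M>0$
\[
 \int|x|\,\charfun{|y|>t/2}\,\lambda(\ud x\,\ud y)\ \le\ \int_{\{|x|>M\}}|x|\,\mu(\ud x)+M\,\nu(|y|>t/2)\ \le\ \int_{\{|x|>M\}}|x|\,\mu(\ud x)+\frac{2M}{t}\int|y|\,\nu(\ud y),
\]
which becomes arbitrarily small, uniformly over $\lambda\in\Gamma(\mu,\nu)$, upon choosing $M$ large and then $t$ large; the other cross term is symmetric. Hence $\sup_{\lambda\in\Gamma(\mu,\nu)}\int|(x,y)|\,\charfun{|(x,y)|>t}\,\lambda(\ud x\,\ud y)\to0$ as $t\to\infty$, so $\lambda_{n_k}\to\lambda$ in $d_W$ with $\lambda\in\Gamma(\mu,\nu)$, giving sequential compactness and therefore compactness of $\Gamma(\mu,\nu)$. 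I expect the only real obstacle to be this uniform integrability of the cross terms; everything else is a routine assembly of tightness, Prokhorov's theorem and continuity of the marginals.
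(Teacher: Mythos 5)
Your proof is correct and follows essentially the same route as the paper: closedness of $\Gamma(\mu,\nu)$ as a preimage under the continuous marginal maps, plus uniform integrability of the coupling set inherited from the first moments of $\mu$ and $\nu$, which yields relative compactness in $\mP_1(\R^d\times\R^d)$. The only differences are cosmetic: the paper invokes the compactness criterion of Ambrosio--Gigli--Savar\'e instead of redoing the Prokhorov/upgrade-to-$d_W$ step by hand, and it bounds $|(x,y)|\le 2\max\{|x|,|y|\}$, which transfers the tail estimate directly to the marginals and avoids the cross terms you handle by truncation.
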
 
\begin{proof} 
Let $\lambda \in \Gamma(\mu,\nu)$. Note that $|(x,y)|/2\le
\max\{|x|,|y|\} \eqdef
|x|\vee |y|$ for all $x,y\in \R^d$. Therefore, for any $t>0$
\begin{align*}
    \frac{1}{2}\int |(x,y)| &\charfun{|(x,y)|>t} \lambda(\ud x\times
    \ud y) \\
    &\le\int \big(|x| \vee |y|\big) \charfun{2\big(|x|\vee |y|\big)>t} \lambda(\ud x\times
    \ud y) \\
    &\le \int |x| \charfun{|x|>\smash{\frac{t}{2}}\vphantom{\Big|}} \mu(\ud x)
    + \int |y| \charfun{|y|>\smash{\frac{t}{2}}\vphantom{\Big|}} \nu(\ud y) .
\end{align*}
Because the measures $\mu$ and $\nu$ have finite first moments,
the right side above tends to zero as $t \to \infty$, uniformly with 
respect to $\lambda \in \Gamma(\mu,\nu)$
We conclude that $\Gamma(\mu,\nu)$ is uniformly integrable and hence also tight. By
\cite[Proposition 7.1.5]{ambrosio-gigli-savare}, it follows that
$\Gamma(\mu, \nu)$  is relatively compact in $\mP_1(\R^d\times\R^d)$.

To verify that $\Gamma(\mu,\nu)$ is closed, it suffices to observe that
it can be written as a preimage $ \Gamma(\mu, \nu) =  
\Pi^{-1}\big(\{(\mu,\nu)\}\big)$
of the map $\Pi: \mP_1(\R^d\times\R^d) \to \mP_1(\R^d)\times\mP_1(\R^d)$ defined by $\Pi(\lambda) = (\pi^1_\# \lambda, \pi^2_\# \lambda)$
which is continuous by Lemma~\ref{the:ContinuousProjection}.
\end{proof}


\subsection{Two-parameter martingales and submartingales}
\label{sec:martingales} 

Recall that $\mM_2(\R^d)$ (resp.\ $\mM^*_2(\R^d)$) denotes the
collection of probability measures on $\R^d\times\R^d$ which are joint
distributions of a two-parameter martingale (resp.\ submartingale).
The following elementary lemmas stated without a proof 
give convenient ways to characterise
these collections. 


\begin{lemma} 
\label{lem:mart-char} 
The following are equivalent for any $\lambda \in \mP_1(\R^d\times\R^d)$:
\begin{enumerate}[(i)]
\item \label{item:cond-Mstari}
$\lambda \in \mM_2(\R^d)$.
\item \label{item:cond-exp-marti}
$\E[Y \mid X] = X$ a.s.\ for any random vector $(X,Y)$ with distribution 
$\lambda$.
\item \label{item:set-test-marti}
$\int y \ind(x\in A) \lambda(\ud x \times \ud y) 
= \int x \ind(x\in A) \lambda(\ud x \times \ud y)$ for all Borel sets 
$A \subset \R^d$. 
\item \label{item:cont-test-marti}
$\int y \phi(x) \lambda(\ud x\times\ud y) \! = \! \int x \phi(x)
\lambda(\ud x\times\ud y)$ for all continuous bounded $\phi: \R^d \to \R_+$.
\end{enumerate}
\end{lemma}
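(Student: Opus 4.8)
The plan is to prove the four-way equivalence in Lemma~\ref{lem:mart-char} by the cyclic chain $(\ref{item:cond-Mstari}) \Leftrightarrow (\ref{item:cond-exp-marti}) \Rightarrow (\ref{item:set-test-marti}) \Leftrightarrow (\ref{item:cont-test-marti}) \Rightarrow (\ref{item:cond-exp-marti})$, noting first that all integrals in (\ref{item:set-test-marti}) and (\ref{item:cont-test-marti}) are finite because $\lambda \in \mP_1(\R^d \times \R^d)$ guarantees $\int (|x| + |y|)\, \lambda(\ud x \times \ud y) < \infty$.

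For $(\ref{item:cond-Mstari}) \Leftrightarrow (\ref{item:cond-exp-marti})$: by definition $\lambda \in \mM_2(\R^d)$ means $\lambda$ is the joint law of some martingale $(X_1, X_2)$ indexed by $\{1,2\}$; such a process is, up to relabelling, exactly a pair $(X,Y)$ with $\E[Y \mid X] = \E[X_2 \mid X_1] = X_1 = X$ almost surely, and the martingale property for a two-step filtration $\sigma(X_1) \subset \sigma(X_1, X_2)$ is equivalent to $\E[X_2 \mid X_1] = X_1$. Integrability of $X$ and $Y$ is automatic from $\lambda \in \mP_1$. This is routine.

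For $(\ref{item:cond-exp-marti}) \Rightarrow (\ref{item:set-test-marti})$: take $(X,Y) \sim \lambda$; for any Borel $A \subset \R^d$ the set $\{X \in A\}$ is $\sigma(X)$-measurable, so $\E[Y \ind(X \in A)] = \E[\E[Y \mid X]\ind(X \in A)] = \E[X\ind(X \in A)]$, which is (\ref{item:set-test-marti}) written out as integrals against $\lambda$. For $(\ref{item:set-test-marti}) \Leftrightarrow (\ref{item:cont-test-marti})$: one direction is a standard approximation argument --- indicators $\ind_A$ of Borel sets and bounded continuous functions $\phi \ge 0$ generate, via monotone/dominated convergence, the same family of identities; since the integrands $y\phi(x)$ and $x\phi(x)$ are dominated by $\|\phi\|_\infty(|x| + |y|) \in L^1(\lambda)$, passing from one class of test functions to the other is justified by dominated convergence (approximate a bounded continuous $\phi$ by simple functions, or approximate $\ind_A$ by continuous functions using regularity of the finite measures $|x|\lambda$, $|y|\lambda$, $\lambda$). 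For $(\ref{item:cont-test-marti}) \Rightarrow (\ref{item:cond-exp-marti})$: having (\ref{item:set-test-marti}) (equivalently (\ref{item:cont-test-marti})) coordinatewise gives $\E[(Y - X)\ind(X \in A)] = 0$ for all Borel $A$, i.e.\ $\E[Y - X \mid X] = 0$ almost surely by the defining property of conditional expectation, hence $\E[Y \mid X] = X$.

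The main obstacle --- really the only non-bookkeeping point --- is the equivalence $(\ref{item:set-test-marti}) \Leftrightarrow (\ref{item:cont-test-marti})$, since moving between a generating class of bounded measurable test functions and a generating class of bounded continuous ones requires care that the approximating sequences stay uniformly bounded so dominated convergence applies against the $L^1(\lambda)$-majorant $|x| + |y|$; this is exactly why the hypothesis $\lambda \in \mP_1$ (rather than merely $\lambda \in \mP$) is used. Since the lemma is stated without proof in the paper, I would simply remark that it is elementary, indicate the chain of implications above, and leave the approximation details to the reader.
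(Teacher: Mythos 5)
Your proposal is correct and follows essentially the same route as the paper's own argument (which proves the submartingale version and obtains the martingale case by replacing inequalities with equalities): a cycle through the four conditions in which the only substantive step is passing between indicator and continuous bounded test functions, handled exactly as you indicate by regularity of the relevant finite measures and dominated convergence against the $L^1(\lambda)$-majorant $|x|+|y|$. The minor reorganisation (proving (i)$\Leftrightarrow$(ii) directly rather than closing the cycle via the canonical construction) is immaterial, since $\E[Y\mid X]$ is determined by the joint law.
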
 

\begin{lemma} 
\label{lem:submart-char} 
The following are equivalent for any $\lambda \in \mP_1(\R^d\times\R^d)$:
\begin{enumerate}[(i)]
\item \label{item:cond-Mstar}
$\lambda \in \mM^*_2(\R^d)$.
\item \label{item:cond-exp-mart}
$\E[Y \mid X] \ge X$ a.s.\ for any random vector $(X,Y)$ with distribution $\lambda$.
\item \label{item:set-test-mart}
$\int y \ind(x\in A) \lambda(\ud x \times \ud y) \ge \int x \ind(x\in A)
\lambda(\ud x \times \ud y)$ for all Borel sets $A \subset \R^d$. 
\item \label{item:cont-test-mart}
$\int y \phi(x) \lambda(\ud x\times\ud y) \!\ge\! \int x \phi(x)
\lambda(\ud x\times\ud y)$ for all continuous bounded $\phi:\R^d\to\R_+$.
\end{enumerate}
\end{lemma}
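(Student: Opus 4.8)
The plan is to establish (ii)$\iff$(iii)$\iff$(iv) by reducing all three to elementary facts about an integrable function, and to link (i) with (ii) via the tower property. I would work with a single random vector $(X,Y)$ whose law is $\lambda$ (say the coordinate maps on $(\R^d\times\R^d,\lambda)$), noting that each of (ii), (iii), (iv) depends on $(X,Y)$ only through $\lambda$ and that $X$ and $Y$ are integrable since $\lambda\in\mP_1(\R^d\times\R^d)$. I would fix a Borel version $g:\R^d\to\R^d$ of the conditional expectation $\E[Y\mid X]$ and set $h(x)\defeq g(x)-x$, which is integrable against the law $\pr_X$ of $X$. Using the defining identity of conditional expectation, $\int y\,\ind(x\in A)\,\lambda(\ud x\times\ud y)=\int g(x)\,\ind(x\in A)\,\pr_X(\ud x)$ for every Borel $A$, and likewise with a bounded continuous $\phi\ge0$ in place of $\ind_A$, one rewrites: (ii) says $h\ge0$ holds $\pr_X$-almost surely, (iii) says $\int_A h\,\ud\pr_X\ge0$ for every Borel $A$, and (iv) says $\int\phi\,h\,\ud\pr_X\ge0$ for every bounded continuous $\phi\ge0$ (all inequalities coordinatewise). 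In particular (ii)$\Rightarrow$(iii) and (ii)$\Rightarrow$(iv) are immediate, being weakenings of ``$h\ge0$ a.s.''

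Next I would close the cycle by proving (iii)$\Rightarrow$(ii) and (iv)$\Rightarrow$(ii). For the first, applying (iii) with the Borel set $A=\{x:h_i(x)<0\}$ for each coordinate $i$ makes the integrand nonpositive, forcing $\pr_X(h_i<0)=0$. For the second, introduce the finite $\R^d$-valued measure $m(A)\defeq\int_A h\,\ud\pr_X$; for a closed set $F$ the functions $(1-n\,\mathrm{dist}(x,F))_+$ are bounded, continuous, nonnegative, and decrease pointwise to $\ind_F$, so dominated convergence (with respect to the finite measure $|m|$) gives $m(F)\ge0$; inner regularity of the finite measures $m_i^-$ then upgrades ``$m_i(F)\ge0$ for all closed $F$'' to ``$m_i(B)\ge0$ for all Borel $B$'', which is (iii), and the previous step yields (ii).

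It remains to relate (i) and (ii). If $\lambda\in\mM^*_2(\R^d)$, then $(X,Y)$ can be taken to be a submartingale relative to some filtration $\mF_1\subset\mF_2$ with $X$ being $\mF_1$-measurable; since then $\sigma(X)\subset\mF_1$, the tower property gives $\E[Y\mid X]=\E\big[\E[Y\mid\mF_1]\bigmid X\big]\ge\E[X\mid X]=X$ a.s., which is (ii). Conversely, if (ii) holds then $(X,Y)$ is itself a submartingale relative to its natural filtration $\sigma(X)\subset\sigma(X,Y)$, so $\lambda\in\mM^*_2(\R^d)$. Lemma~\ref{lem:mart-char} is proved in exactly the same way, with every ``$\ge$'' replaced by ``$=$'' and ``submartingale'' by ``martingale''. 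The whole argument is routine; the only step needing genuine care is the measure-theoretic approximation and regularity argument in (iv)$\Rightarrow$(iii), together with the small observation that one may always pass to the natural filtration in (i)$\Rightarrow$(ii).
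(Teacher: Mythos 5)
Your proof is correct and runs along essentially the same lines as the paper's own (commented-out) argument, which also reduces everything to the defining identity of conditional expectation and passes from continuous test functions to indicator functions by approximating and invoking regularity of the associated finite signed measure, before closing the loop between (i) and (ii) via the tower property and the natural filtration. The only differences are cosmetic: you introduce a Borel version $h(x)=\E[Y\mid X=x]-x$ and approximate closed sets from above using inner regularity of the Jordan decomposition, whereas the paper works directly with the signed measure $\xi(A)=\int (y-x)\ind(x\in A)\,\lambda(\ud x\times\ud y)$ and approximates open sets from below after an outer-regularity reduction.
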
 

The following lemma shows that martingale and submartingale measures
form closed sets with respect to the Wasserstein metric.

\begin{lemma}
\label{the:MartingaleClosed} 
The sets $\mM_2(\R^d)$ and $\mM^*_2(\R^d)$ are closed in
$\mP_1(\R^d\times\R^d)$.
\end{lemma}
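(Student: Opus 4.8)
The plan is to show that the complements are open, equivalently that each of the sets is closed under Wasserstein-convergent limits. Fix a sequence $\lambda_n \to \lambda$ in $\mP_1(\R^d\times\R^d)$ with $\lambda_n \in \mM_2(\R^d)$ for all $n$ (the submartingale case is identical with inequalities). By Lemma~\ref{lem:mart-char}, characterisation \eqref{item:cont-test-marti}, it suffices to show that for every continuous bounded $\phi:\R^d\to\R_+$ we have $\int y\phi(x)\,\lambda(\ud x\times\ud y) = \int x\phi(x)\,\lambda(\ud x\times\ud y)$, knowing that the same identity holds with $\lambda$ replaced by $\lambda_n$. The natural idea is to pass to the limit inside both integrals.

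The main obstacle is that the integrands $(x,y)\mapsto y\phi(x)$ and $(x,y)\mapsto x\phi(x)$ are continuous but \emph{not bounded} (they grow linearly in $|y|$ and $|x|$ respectively), so convergence in distribution alone does not justify passing to the limit; this is exactly where the stronger Wasserstein convergence enters. I would recall from Section~\ref{sec:Wasserstein} that $d_W(\lambda_n,\lambda)\to 0$ is equivalent to $\lambda_n\to\lambda$ in distribution together with uniform integrability, $\sup_n \int |(x,y)|\charfun{|(x,y)|>t}\,\lambda_n(\ud x\times\ud y)\to 0$ as $t\to\infty$. A clean way to use this is the standard fact that Wasserstein convergence on $\mP_1$ implies $\int g\,\ud\lambda_n \to \int g\,\ud\lambda$ for every continuous $g$ with at most linear growth, i.e. $|g(x,y)| \le C(1+|(x,y)|)$; indeed one truncates $g$ at level $t$ by a continuous bounded function, controls the error on $\{|(x,y)|>t\}$ uniformly in $n$ using uniform integrability, and lets $t\to\infty$. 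Since $|y\phi(x)| \le \|\phi\|_\infty |y| \le \|\phi\|_\infty(1+|(x,y)|)$ and likewise for $x\phi(x)$, both integrands have linear growth, so this applies.

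Concretely the steps are: (1) note it suffices to treat $\mM_2(\R^d)$ and verify sequential closedness, since $\mP_1(\R^d\times\R^d)$ is metric; (2) invoke Lemma~\ref{lem:mart-char}\eqref{item:cont-test-marti} to reduce closedness to stability of the identity $\int y\phi(x)\,\ud\lambda = \int x\phi(x)\,\ud\lambda$ under limits, for each continuous bounded $\phi\ge 0$; (3) establish the auxiliary claim that $d_W(\lambda_n,\lambda)\to 0$ implies $\int g\,\ud\lambda_n\to\int g\,\ud\lambda$ for continuous $g$ of at most linear growth, via a truncation argument combined with the uniform-integrability characterisation of Wasserstein convergence from \cite[Proposition 7.1.5]{ambrosio-gigli-savare}; (4) apply this with $g(x,y)=y\phi(x)$ and $g(x,y)=x\phi(x)$ to conclude $\int y\phi(x)\,\ud\lambda = \lim_n \int y\phi(x)\,\ud\lambda_n = \lim_n \int x\phi(x)\,\ud\lambda_n = \int x\phi(x)\,\ud\lambda$. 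For $\mM^*_2(\R^d)$ the same argument gives $\int y\phi(x)\,\ud\lambda \ge \int x\phi(x)\,\ud\lambda$ using Lemma~\ref{lem:submart-char}\eqref{item:cont-test-mart}. The only mildly delicate point is the uniform tail bound in step (3), but this is immediate from the uniform integrability of $(\lambda_n)$ that Wasserstein convergence supplies.
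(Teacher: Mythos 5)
Your proposal is correct and follows essentially the same route as the paper: reduce closedness to the integral characterisations of Lemmas~\ref{lem:mart-char} and \ref{lem:submart-char} with continuous bounded test functions, then pass to the limit in the linearly growing integrands by a continuous truncation, using convergence in distribution for the truncated part and the uniform integrability supplied by Wasserstein convergence for the tail. The only cosmetic difference is that you package the truncation step as a general statement about continuous functions of linear growth (and treat $y\phi(x)$ and $x\phi(x)$ separately), whereas the paper runs the same argument inline for the single function $g(x)=(x_2-x_1)\phi(x_1)$.
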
 
\begin{proof} 
Assume that
$\mu_n \in \mM^*_2(\R^d)$ and $\mu \in \mP_1(\R^d\times\R^d)$ such that
$d_W(\mu_n,\mu) \to 0$.
Then $\mu_n \to \mu$ in distribution and $(\mu_n)$ is uniformly integrable. 
Let $\phi: \R^d \to \R_+$
be continuous and bounded. By 
Lemma \ref{lem:submart-char},
it is sufficient to verify that
\begin{equation}
 \label{eq:MartingaleLimit}
 \int_{\R^d\times\R^d} (x_2-x_1) \phi(x_1)  \mu(\ud x) \ge 0.
\end{equation}

To do this, let $g(x) \defeq (x_2-x_1) \phi(x_1)$,
fix $t > 0$ and choose a continuous function 
$k_t: \R^d\times\R^d \to [0,1]$ 
such that $k_t(x) = 1$
for $|x| \le t$ and $k_t(x) = 0$ for $|x| > t+1$. 
Let us write $g = g^0_t + g^1_t$ where $g^0_t(x) \defeq g(x) k_t(x)$ 
and $g^1_t(x) \defeq g(x) \big(1-k_t(x)\big)$.
Then
\[
 \mu_n(g) - \mu(g)
 = \big(\mu_n(g^0_t) - \mu(g^0_t)\big) + \big(\mu_n(g^1_t) -
 \mu(g^1_t)\big).
\]
Now $g^0_t$ is continuous and bounded, so that $\mu_n(g^0_t) \to \mu(g^0_t)$ by 
convergence in distribution.
Moreover, $|g^1_t(x)| \le 2|x| \, ||\phi||_\infty \charfun{|x| > t}$. This bound together with
uniform integrability shows that $\sup_{n} \big(\mu_n(g^1_t) - \mu(g^1_t)\big)\to 0$ 
as $t \to \infty$.
We can make the last two terms on the right side above arbitrarily close to zero by choosing $t$ large enough,
uniformly in $n$. Then by letting $n \to \infty$
we may conclude that $\mu_n(g) \to \mu(g)$ as $n \to \infty$.
The submartingale property implies 
by Lemma \ref{lem:submart-char}
that $\mu_n(g) \ge 0$ for all $n$, so we conclude that $\mu(g)\ge 0$ and
therefore~\eqref{eq:MartingaleLimit} is valid.

The proof that $\mM_2(\R^d)$ is closed is identical, with equality in
\eqref{eq:MartingaleLimit}.
\end{proof} 


\subsection{Measurability of the coupling map} 
\label{sec:set-valued} 

In what follows, we consider set-valued mappings
(a.k.a.~multifunctions \cite{srivastava}) from a measurable space
$(S,\mS)$ to the topological space $\mP_1\big((\R^d)^n\big)$ equipped
with the Wasserstein metric. A set-valued mapping $G$ maps a point
$\theta \in S$ to a set $G(\theta) \subset \mP_1\big((\R^d)^n\big)$. The
\emph{set-valued inverse} of such a mapping $G$ is defined by
\[
 G^-(A)
 \defeq \{\theta \in S\given G(\theta) \cap A \neq \emptyset\},
 \qquad A \subset \mP_1\big((\R^d)^n\big).
\]
The set-valued map $G$ is called \emph{measurable} if $G^-(A) \in \mS$
for all closed $A \subset \mP_1\big((\R^d)^n\big)$. By expressing an
open set $U \subset \mP_1\big((\R^d)^n\big)$ as a countable union of 
closed balls,
we see that the measurability of $G$ implies that $G^-(U) \in \mS$
also for open sets $U$.

\begin{proposition} 
 \label{prop:couplling-measurable} 
Let $P$ and $Q$ be probability kernels from $S$ to $\R^d$ with finite
first moments. Then
\[
 F(\theta) \defeq \Gamma \big( P_\theta, Q_\theta \big)
\]
is measurable as a set-valued mapping from $S$ to $\mP_1(\R^d\times\R^d)$.
\end{proposition}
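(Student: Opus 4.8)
The plan is to exhibit $F$ as a countable intersection/union of pieces whose set-valued inverses are manifestly measurable, using the characterisation of $\Gamma(P_\theta,Q_\theta)$ as the intersection of the (measurably varying) set of measures whose first marginal is $P_\theta$ with the set whose second marginal is $Q_\theta$. To this end, recall from Lemma~\ref{the:CouplingCompact} that $\Gamma(\mu,\nu)=\Pi^{-1}(\{(\mu,\nu)\})$ for the continuous marginalising map $\Pi(\lambda)=(\pi^1_\#\lambda,\pi^2_\#\lambda)$, so $F(\theta)=\Pi^{-1}\bigl(\{(P_\theta,Q_\theta)\}\bigr)$. Thus $F$ is the composition of the (ordinary, single-valued, but $S$-indexed) map $\theta\mapsto(P_\theta,Q_\theta)\in\mP_1(\R^d)\times\mP_1(\R^d)$ with the fixed set-valued map $(\mu,\nu)\mapsto\Pi^{-1}(\{(\mu,\nu)\})$. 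I would first check that $\theta\mapsto(P_\theta,Q_\theta)$ is a measurable map into $\mP_1(\R^d)\times\mP_1(\R^d)$ with its Borel $\sigma$-algebra: since $\mP_1(\R^d)$ is a complete separable metric space under $d_W$, its Borel $\sigma$-algebra is generated by the evaluation maps $\mu\mapsto\mu(B)$ for Borel $B$ (or by $\mu\mapsto\int f\,\ud\mu$ for bounded continuous $f$), and measurability of $\theta\mapsto P_\theta(B)$ is exactly the defining property of a probability kernel; a monotone-class argument upgrades this to $\theta\mapsto\int f\,\ud P_\theta$ for bounded continuous $f$, and then the uniform-integrability/convergence criterion identifies these with Borel measurability in $d_W$.

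Next I would reduce the set-valued measurability of $F$ to a Wasserstein-continuity property of the fixed map $(\mu,\nu)\mapsto\Pi^{-1}(\{(\mu,\nu)\})$. Concretely, for a closed set $A\subset\mP_1(\R^d\times\R^d)$,
\[
 F^-(A)=\{\theta\in S\given \Pi^{-1}(\{(P_\theta,Q_\theta)\})\cap A\neq\emptyset\}
 =\{\theta\in S\given (P_\theta,Q_\theta)\in\Pi(A)\}.
\]
By Lemma~\ref{the:CouplingCompact} each $\Gamma(\mu,\nu)$ is compact, and $A$ is closed, so $\Gamma(\mu,\nu)\cap A$ is compact; a standard argument (using relative compactness of $\Pi^{-1}$ of a neighbourhood of $(\mu,\nu)$, which follows from the uniform-integrability bound in the proof of Lemma~\ref{the:CouplingCompact}, together with continuity of $\Pi$) shows that $\Pi(A)$ is a closed, in fact Borel, subset of $\mP_1(\R^d)\times\mP_1(\R^d)$. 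Hence $F^-(A)$ is the preimage of a Borel set under the measurable map $\theta\mapsto(P_\theta,Q_\theta)$, so $F^-(A)\in\mS$.

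A second, more hands-on route, which I would present if the closedness of $\Pi(A)$ feels delicate, is to write $F$ directly as an intersection of two set-valued maps, $F(\theta)=G_1(\theta)\cap G_2(\theta)$ with $G_1(\theta)=(\pi^1_\#)^{-1}(\{P_\theta\})$ and $G_2(\theta)=(\pi^2_\#)^{-1}(\{Q_\theta\})$. One shows each $G_i$ is measurable: for $G_1$, the relevant fact is that $\{(\lambda,\mu):\pi^1_\#\lambda=\mu\}$ is closed in $\mP_1(\R^d\times\R^d)\times\mP_1(\R^d)$ by continuity of $\pi^1_\#$ (Lemma~\ref{the:ContinuousProjection}), so $G_1^-(A)=\{\theta:\exists\lambda\in A,\ \pi^1_\#\lambda=P_\theta\}$ is the $\theta$-section of a Borel set over a compact-valued fibration, again Borel. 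Then one invokes the classical fact that a countable intersection of measurable compact-valued (hence closed-valued) maps into a Polish space is measurable (e.g.\ via the Kuratowski–Ryll-Nardzewski machinery or the projection theorem). \textbf{The main obstacle} is precisely this last point — intersections of measurable multifunctions are not measurable in general, and one must genuinely use that $\Gamma(P_\theta,Q_\theta)$ is compact (Lemma~\ref{the:CouplingCompact}) and that the ambient space $\mP_1(\R^d\times\R^d)$ is Polish; equivalently, in the first route, the work is in verifying that $\Pi(A)$ is Borel, which is where compactness of the fibres and properness-type estimates enter. The moment/uniform-integrability bookkeeping (ensuring everything stays inside $\mP_1$, with uniform tightness over relevant neighbourhoods) is routine given the lemmas of Section~\ref{sec:Wasserstein}, so I would not belabour it.
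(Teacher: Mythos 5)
Your first route has the same skeleton as the paper's proof: write $F(\theta)=\Pi^{-1}\big(\{(P_\theta,Q_\theta)\}\big)$, so that $F^-(A)=\{\theta\given (P_\theta,Q_\theta)\in\Pi(A)\}$, prove that $\theta\mapsto(P_\theta,Q_\theta)$ is measurable into $\mP_1(\R^d)\times\mP_1(\R^d)$, and show that $\Pi$ maps closed sets to Borel sets. The genuine divergence is in the last step. The paper only proves Borelness of $\Pi(A)$, via Lemma~\ref{the:BorelImages} (Borel graph plus Novikov's theorem on sets with compact sections, compactness coming from Lemma~\ref{the:CouplingCompact}), whereas you claim $\Pi(A)$ is actually closed. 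That stronger claim is correct and yields a more elementary argument, but your stated justification fails as phrased: $\Pi^{-1}$ of a $d_W$-neighbourhood of $(\mu,\nu)$ is \emph{not} relatively compact, because closed Wasserstein balls are not compact (take $\mu_n=(1-n^{-1})\delta_0+n^{-1}\delta_{n e_1}$, which stays at $d_W$-distance $1$ from $\delta_0$ and has no $d_W$-convergent subsequence). The fix is the sequential, properness-type argument: if $\Pi(\lambda_n)=(\mu_n,\nu_n)\to(\mu,\nu)$ with $\lambda_n\in A$ closed, then $(\mu_n)$ and $(\nu_n)$ are uniformly integrable by the convergence characterisation quoted in Section~\ref{sec:Wasserstein}, the coupling bound from the proof of Lemma~\ref{the:CouplingCompact} makes $(\lambda_n)$ uniformly integrable and tight, hence relatively compact in $\mP_1(\R^d\times\R^d)$; a subsequential limit lies in $A$ and has marginals $(\mu,\nu)$ by Lemma~\ref{the:ContinuousProjection}, so $(\mu,\nu)\in\Pi(A)$. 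With that correction your step is sound and avoids the paper's descriptive-set-theoretic detour altogether.

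A second, smaller point: the measurability of $\theta\mapsto P_\theta$ is stated too casually. That the Borel $\sigma$-algebra of $(\mP_1(\R^d),d_W)$ is generated by the evaluation maps $\mu\mapsto\mu(B)$ is true, but it is not free, since the $d_W$-topology is strictly finer than the weak topology; one needs either a Lusin--Souslin type identification of the two Borel structures, or, as the paper does in Lemma~\ref{the:KernelStrongMeasurable} combined with Lemma~\ref{lem:wasserstein-countable}, a description of closed $d_W$-balls through a countable family of $1$-Lipschitz test functions, together with the finite-first-moment hypothesis to get measurability of $\theta\mapsto P_\theta(g)$ for unbounded Lipschitz $g$. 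Your fallback route via $F=G_1\cap G_2$ correctly identifies that intersections of measurable multifunctions are the delicate point, but it invokes heavier machinery (intersection theorems for compact-valued measurable maps) than the direct argument requires.
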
 

The proof of Proposition~\ref{prop:couplling-measurable} is based on
the three auxiliary lemmas which will be stated and proved next.

\begin{lemma} 
\label{the:KernelStrongMeasurable} 
Let $P$ be probability kernel from $S$ to $\R^d$ with finite first moments.
Then $\theta \mapsto P_\theta$ is a measurable map from $S$ to
$\mP_1(\R^d)$.
\end{lemma}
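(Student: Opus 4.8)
The plan is to reduce measurability of $\theta \mapsto P_\theta$ as a map into the metric space $(\mP_1(\R^d), d_W)$ to measurability of countably many real-valued maps, exploiting that $\mP_1(\R^d)$ is a separable metric space. First I would recall that for a map into a separable metric space, Borel measurability is equivalent to the requirement that $\theta \mapsto d_W(P_\theta, \rho)$ be measurable for every $\rho$ in a fixed countable dense subset $D \subset \mP_1(\R^d)$; indeed, open balls centred at points of $D$ generate the Borel $\sigma$-algebra, and the preimage of such a ball is $\{\theta : d_W(P_\theta,\rho) < r\}$. So it suffices to show $\theta \mapsto d_W(P_\theta,\rho)$ is measurable for each fixed $\rho \in \mP_1(\R^d)$.

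Next I would use the dual (Kantorovich--Rubinstein) representation
\[
 d_W(P_\theta,\rho) = \sup_{f \in \Lip_1} \left( \int f \, \ud P_\theta - \int f \, \ud \rho \right),
\]
where $\Lip_1$ is the set of $1$-Lipschitz functions $\R^d \to \R$; by normalising one may take the supremum over $1$-Lipschitz $f$ with $f(0)=0$. For each such $f$, the map $\theta \mapsto \int f \, \ud P_\theta$ is measurable: it holds for indicators of Borel sets by the very definition of a probability kernel, extends to simple functions by linearity, to bounded measurable $f$ by monotone/dominated convergence, and then to the (unbounded, but integrable against each $P_\theta$ by the finite-first-moment assumption) Lipschitz $f$ by truncating $f$ at level $\pm m$ and letting $m \to \infty$, the truncations converging pointwise and dominated by $|f(0)| + |x|$ which is $P_\theta$-integrable. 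Hence $\theta \mapsto \int f\,\ud P_\theta - \int f\,\ud\rho$ is measurable for each fixed $f$. Finally, $\Lip_1$ (with $f(0)=0$) is separable for the topology of uniform convergence on compacts, and along a countable dense family $(f_k)$ one has $d_W(P_\theta,\rho) = \sup_k \big( \int f_k \, \ud P_\theta - \int f_k \, \ud \rho \big)$, because the integral functionals are continuous under this mode of approximation (again using the uniform first-moment control supplied by finite first moments of $P_\theta$ and $\rho$). A countable supremum of measurable functions is measurable, which completes the argument.

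The step I expect to be the main obstacle is the last one: justifying that the supremum over the uncountable family $\Lip_1$ can be replaced by a supremum over a countable subfamily, uniformly in $\theta$. The subtlety is that the approximating functions are unbounded, so one cannot simply invoke uniform convergence on all of $\R^d$; one must combine local uniform approximation with a tail estimate that is uniform in $f \in \Lip_1$ but depends on $\theta$ only through $\int |x|\,P_\theta(\ud x)$, and the latter need not be bounded in $\theta$. A clean way around this is to fix a countable dense $D$ and for each $\rho \in D$ handle $d_W(\uarg,\rho)$ separately, or alternatively to bypass the dual formula entirely: one can instead note that $\mP_1(\R^d)$ is a Borel subset of $\mP(\R^d)$ with the weak topology, that $\theta \mapsto P_\theta$ is measurable into $\mP(\R^d)$ (weak topology) by the same simple-function argument applied to bounded continuous $f$, and that the Wasserstein Borel structure on $\mP_1(\R^d)$ coincides with the trace of the weak Borel structure together with measurability of $\theta \mapsto \int |x| \, P_\theta(\ud x)$ (which is the increasing limit of the measurable maps $\theta \mapsto \int (|x|\wedge m)\,P_\theta(\ud x)$). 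Either route works; I would present whichever is shorter given the conventions already set up in Section~\ref{sec:Wasserstein}.
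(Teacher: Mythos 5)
Your argument is essentially the paper's own proof: measurability of $\theta \mapsto P_\theta f$ via simple functions and dominated convergence, the Kantorovich--Rubinstein dual formula evaluated along a countable family of $1$-Lipschitz functions (the paper's Lemma~\ref{lem:wasserstein-countable} in Appendix~\ref{sec:wasserstein}), and separability of $\mP_1(\R^d)$ to pass from balls to arbitrary open sets, with your reduction to $\theta \mapsto d_W(P_\theta,\rho)$ for $\rho$ in a countable dense set being just a reformulation of the paper's step that preimages of closed $d_W$-balls are measurable. The obstacle you flag is not actually one: the countable-supremum identity is needed only pointwise, for each fixed pair of measures in $\mP_1(\R^d)$, so the tail cut-off may depend on $\theta$ and no uniformity in $\theta$ is required --- this pointwise identity is exactly what Lemma~\ref{lem:wasserstein-countable} establishes.
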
 
\begin{proof} 
Let us first verify that $\theta \mapsto P_\theta f$ is measurable for
every Borel function $f: \R^d \to \R$ such that $\int |f(y)|
P_\theta(\ud y) < \infty$ for all $\theta \in S$. Choose a sequence of
simple Borel functions such that $f_n \to f$ and $|f_n| \le |f|$
pointwise.  By linearity, $\theta \mapsto P_\theta f_n$ is measurable
for any $n$. By dominated convergence,
\[
 P_\theta f = \lim_{n \to \infty} P_\theta f_n
\]
by which $\theta \mapsto P_\theta f$ is measurable as a pointwise
limit of measurable functions.

Let then $B_\epsilon(\mu)$ denote the closed $d_W$-ball
with radius $\epsilon > 0$ and centre $\mu \in \mP_1(\R^d)$.
We will next show that the preimages $A_{\epsilon,\mu} \defeq
\{\theta: P_\theta \in B_\epsilon(\mu) \}$ of closed balls are
measurable.
By Lemma \ref{lem:wasserstein-countable} in Appendix
\ref{sec:wasserstein}, there exists a countable set $\mathcal{T}_d$ of
1-Lipschitz functions on $\R^d$ such that
\[
 A_{\epsilon,\mu}
 = \Big\{ \theta: \sup_{g\in \mathcal{T}_d} 
   \big[ P_\theta(g)-\mu(g) \big] \le \epsilon \Big\}
  = \bigcap_{g\in\mathcal{T}_d} 
    \big\{\theta: \big[P_\theta(g)-\mu(g)\big] \le \epsilon \big\}.
\]
Therefore, $A_{\epsilon,\mu}$ is measurable as a countable
intersection of measurable sets.

Let then $U$ be an open set in $\mP_1(\R^d)$. Because $\mP_1(\R^d)$ is a 
separable metric space,
$U$ may be expressed as a countable union of $d_W$-balls 
$B_1,B_2,\ldots$, and therefore
\[
 \{\theta: P_\theta \in U\} = \bigcup_{i=1}^\infty
 \{\theta: P_\theta \in B_i\}
\]
is measurable. This implies the claim.
\end{proof}

We next consider the marginaliser map 
$\Pi: \mP\big((\R^d)^n\big) \to \mP(\R^d)^n$ defined by
\[
 \Pi(\mu) = (\pi^1_\# \mu, \ldots, \pi^n_\# \mu).
\]
It takes a probability measure on $(\R^d)^n$ as its input and returns 
its marginal distributions on $\R^d$. If the input of $\Pi$ has a
finite first moment, then so do its its marginal distributions.
Therefore, we may also consider $\Pi$ as a mapping from
$\mP_1\big((\R^d)^n\big)$
onto $\mP_1(\R^d)^n$.

\begin{lemma} 
\label{the:BorelImages} 
Let $S$ and $S'$ be Polish spaces and $f: S \to S'$ a Borel map such that 
$f^{-1}(y)$ is compact for all $y \in S'$. Then $f$ maps closed sets into Borel sets.
\end{lemma}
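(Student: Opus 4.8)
The plan is to reduce the statement to a classical result in descriptive set theory: a continuous injective image of a Borel set in a Polish space is Borel, or more precisely, that a Borel map with $\sigma$-compact (here, compact) fibres sends Borel sets to Borel sets. First I would fix a closed set $C \subset S$; since $S$ is Polish, $C$ is itself Polish in the subspace topology, so it suffices to show that $f(C)$ is Borel whenever $C = S$, i.e.\ that $f(S)$ is Borel, and then apply this to the restriction $f|_C$. The key structural fact I want to invoke is the following: if $f : S \to S'$ is Borel between Polish spaces with all fibres $\sigma$-compact, then $f$ maps Borel sets to Borel sets. This is a standard consequence of the Lusin--Novikov uniformisation theorem or, alternatively, of the Arsenin--Kunugui theorem on Borel sets with $\sigma$-compact sections; see \cite{srivastava}. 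A fibre that is compact is in particular $\sigma$-compact, so the hypothesis is met.

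Concretely, I would argue as follows. Replace $f$ by its graph map and work with the graph $G = \graph(f) = \{(x, f(x)) : x \in S\} \subset S \times S'$, which is Borel because $f$ is Borel and $S'$ is Hausdorff with a countable base. For a closed set $C \subset S$, the set $G_C \defeq G \cap (C \times S')$ is Borel in $S \times S'$, and its projection onto $S'$ is exactly $f(C)$. Now, for each $y \in S'$, the vertical section $(G_C)^y = \{x \in C : f(x) = y\} = f^{-1}(y) \cap C$ is a closed subset of the compact set $f^{-1}(y)$, hence compact. Thus $G_C$ is a Borel subset of the product of two Polish spaces all of whose $S$-sections are compact (equivalently $\sigma$-compact), and the Arsenin--Kunugui theorem \cite[Theorem 5.12.1]{srivastava} tells us that the projection of such a set onto $S'$ is Borel. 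Therefore $f(C)$ is Borel, which is the claim.

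The main obstacle is purely expository rather than mathematical: one must cite the correct form of the projection theorem. The naive statement "a Borel set projects to a Borel set" is false; what rescues us is the compactness of the fibres, which forces the sections of $G_C$ to be compact and so places us in the scope of Arsenin--Kunugui. An alternative route, avoiding the graph, is to note that since each fibre is compact and $f$ is Borel, one can find a Borel uniformising selection on each level of a suitable decomposition of $S'$, but invoking Arsenin--Kunugui directly is cleaner. I would also remark that Polishness of $S$ is used only to ensure $C$ (and hence $G_C$) sits inside a standard Borel space, and Polishness of $S'$ only to have a standard Borel target for the projection theorem; both hypotheses are exactly what the cited theorem requires.
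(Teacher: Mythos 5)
Your argument is essentially the same as the paper's: both pass to the Borel graph of $f$, intersect it with $C\times S'$, observe that the vertical sections $f^{-1}(y)\cap C$ are compact, and conclude via a projection theorem for Borel sets with small sections. The only difference is the tool cited at the last step---the paper invokes Novikov's theorem \cite[Theorem 4.7.11]{srivastava} for compact sections, while you invoke the more general Arsenin--Kunugui theorem for $\sigma$-compact sections---which is a cosmetic distinction, not a different proof.
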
 
\begin{proof} 
By \cite[Propositions 3.1.21 and 3.1.23]{srivastava} the graph
of $f$
\[
 \graph(f) \defeq \{(x,f(x)): x \in S\}.
\]
is a Borel set in $S \times S'$. For any 
closed set $A \subset S$, the image $f(A)$ can be represented
as a projection of the set
\[
 B \defeq \graph(f) \cap (A \times S').
\]
Observe next that for any $y \in S'$ the section
\[
 \{ x: (x,y) \in B \} = f^{-1}(y) \cap A
\]
is compact. Therefore, Novikov's theorem 
\cite[Theorem 4.7.11]{srivastava} implies that $f(A)$ is Borel.
\end{proof} 

\begin{lemma}
\label{the:Marginaliser} 
The marginaliser map $\Pi: \mP_1\big((\R^d)^n\big) \to \mP_1(\R^d)^n$ 
defined by
\[
 \Pi(\mu) = (\pi^1_\# \mu, \dots, \pi^n_\# \mu)
\]
maps closed sets into Borel sets.
\end{lemma}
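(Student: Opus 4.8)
The plan is to deduce Lemma~\ref{the:Marginaliser} directly from Lemma~\ref{the:BorelImages} by checking its two hypotheses for the marginaliser map $\Pi: \mP_1\big((\R^d)^n\big) \to \mP_1(\R^d)^n$. The first hypothesis is that the source and target spaces are Polish: this is immediate since $\mP_1(\R^k)$ equipped with the Wasserstein metric $d_W$ is a complete separable metric space by \cite[Proposition 7.1.5]{ambrosio-gigli-savare} (as recalled in Section~\ref{sec:Wasserstein}), and a finite product of Polish spaces is Polish. The second, and main, thing to verify is that $\Pi$ is a Borel map with compact preimages $\Pi^{-1}(\mu_1,\dots,\mu_n)$.

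For Borel measurability of $\Pi$, it suffices to show each coordinate $\pi^i_\#: \mP_1\big((\R^d)^n\big) \to \mP_1(\R^d)$ is Borel; but Lemma~\ref{the:ContinuousProjection} already gives that each $\pi^i_\#$ is continuous, hence $\Pi$ is continuous and in particular Borel. For the compactness of fibres, fix $(\mu_1,\dots,\mu_n) \in \mP_1(\R^d)^n$ and observe that
\[
 \Pi^{-1}(\mu_1,\dots,\mu_n)
 = \{\lambda \in \mP_1\big((\R^d)^n\big) \given \pi^i_\# \lambda = \mu_i \text{ for all } i\}
\]
is exactly the set of couplings of $\mu_1,\dots,\mu_n$ (empty unless the arguments are genuine probability measures with finite first moments, but that is the only case of interest). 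This is the multi-marginal analogue of $\Gamma(\mu,\nu)$, and the argument of Lemma~\ref{the:CouplingCompact} carries over verbatim: any such $\lambda$ is dominated in the uniform-integrability sense by $\mu_1,\dots,\mu_n$ via the bound $|(x_1,\dots,x_n)| \le \sqrt{n}\max_i |x_i|$, so the fibre is uniformly integrable and tight, hence relatively compact in $\mP_1\big((\R^d)^n\big)$ by \cite[Proposition 7.1.5]{ambrosio-gigli-savare}; and it is closed as the preimage $\Pi^{-1}\big(\{(\mu_1,\dots,\mu_n)\}\big)$ of a point under the continuous map $\Pi$. A closed relatively compact subset of a metric space is compact, so each fibre is compact.

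With both hypotheses checked, Lemma~\ref{the:BorelImages} applies and yields that $\Pi$ maps closed sets to Borel sets, which is the claim. The only mild subtlety — hardly an obstacle — is making sure that the compactness-of-couplings argument of Lemma~\ref{the:CouplingCompact} is stated for $n$ marginals rather than two; since that lemma's proof only used the first-moment finiteness of the marginals and a comparison of norms, the extension is routine and can be invoked without repeating the computation.
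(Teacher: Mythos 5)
Your proposal is correct and follows essentially the same route as the paper: continuity of $\Pi$ via Lemma~\ref{the:ContinuousProjection}, Polishness of the Wasserstein spaces, compactness of the fibres $\Pi^{-1}(\{(\mu_1,\dots,\mu_n)\}) = \Gamma(\mu_1,\dots,\mu_n)$ via (the $n$-marginal version of) Lemma~\ref{the:CouplingCompact}, and then Lemma~\ref{the:BorelImages}. Your explicit remark that the two-marginal compactness argument extends verbatim to $n$ marginals is a point the paper leaves implicit, but otherwise the arguments coincide.
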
 
\begin{proof} 
$\Pi$ is continuous by Lemma~\ref{the:ContinuousProjection}, and hence
also Borel. The spaces $\mP_1\big((\R^d)^n\big)$ and $\mP_1(\R^d)^n$ 
are Polish. The preimage of $\Pi$ for any singleton is compact by
Lemma~\ref{the:CouplingCompact}, because
$\Pi^{-1}(\{\nu_1,\ldots,\nu_n\}) = \Gamma(\nu_1,\ldots,\nu_n)$. The
rest follows from Lemma~\ref{the:BorelImages}.
\end{proof} 

\begin{proof}[Proof of Proposition~\ref{prop:couplling-measurable}] 
We write the set of couplings of $P_\theta$ and 
$Q_\theta$ again as a preimage of the marginaliser,
\[
 F(\theta)
 = \Pi^{-1}\big( \{(P_\theta, Q_\theta)\} \big).
\]
Note that $F(\theta) \cap A \neq \emptyset$ if and only if $\mu \in F(\theta)$
for some $\mu \in A$, that is, $\Pi(\mu) = \big(P_\theta, Q_\theta\big)$ for some $\mu \in A$. Therefore, the set-valued inverse
of $F$ may be written as
\[
 F^-(A)
 = \{\theta \in S\given F(\theta) \cap A \neq \emptyset\}
 = \{\theta\in S\given (P_\theta, Q_\theta) \in
   \Pi(A) \}.
\]
By Lemma~\ref{the:Marginaliser}, $\Pi(A)$ is a Borel set in 
$\mP_1(\R^d) \times \mP_1(\R^d)$ whenever $A \subset
\mP_1(\R^d\times\R^d)$ is closed.
By Lemma~\ref{the:KernelStrongMeasurable}, the maps $\theta \mapsto 
P_\theta$ and $\theta \mapsto Q_\theta$ are measurable from $S$ to 
$\mP_1(\R^d)$. 
Thus also the map $\theta \mapsto (P_\theta,Q_\theta)$ 
is measurable from $S$ to $\mP_1(\R^d)\times\mP_1(\R^d)$.
We may hence conclude that $F^-(A)$ is a measurable subset of 
$S$ for any closed $A \subset \mP_1(\R^d\times\R^d)$.
\end{proof} 


\subsection{Proof of Theorem \ref{th:strassenKernel}} 
\label{sec:proof-conclusion} 

Assume that $P_\theta \lecx Q_\theta$ for all $\theta \in S$. Consider
the set-valued mapping $G(\theta) \defeq F(\theta) \cap \mM$, where
$F(\theta) = \Gamma(P_\theta, Q_\theta)$ is the set of couplings of
$P_\theta$ and $Q_\theta$, and $\mM \defeq \mM_2(\R^d)$ is the
collection of joint distributions of two-parameter martingales.
Proposition \ref{prop:couplling-measurable} shows that $F$ is a
measurable set-valued mapping from $S$ to the subsets of
$\mP_1(\R^d\times\R^d)$. For any $A \subset \mP_1(\R^d\times\R^d)$, 
the set-valued inverse of $G$ can be written as
\[
 G^{-}(A) = F^-\big(\mM \cap A).
\]
Because $\mM$ is closed by Lemma \ref{the:MartingaleClosed}, we see
that $G$ is a measurable set-valued mapping from $S$ to the subsets of
$\mP_1(\R^d\times\R^d)$. Furthermore, because $F(\theta)$ is compact for all
$\theta$ by Lemma \ref{the:CouplingCompact}, also $G(\theta)$ is
compact for all $\theta$. Hence $G$ is a measurable compact-valued
mapping from $S$ to the subsets of $\mP_1(\R^d\times\R^d)$. Strassen's coupling
characterisation (Theorem \ref{th:strassen}) implies that $G(\theta)$
is nonempty for all $\theta$. A measurable selection theorem of
Kuratowski and Ryll-Nardzewski \cite{kuratowski-ryll-nardzewski} (see
alternatively \cite[Theorem 5.2.1]{srivastava}) now implies that there
exists a measurable selection for $G$, that is, a measurable function
$g: S \to \mP_1(\R^d\times\R^d)$ such that $g(\theta) \in G(\theta)$ for all
$\theta$. Let us now define a map $(\theta,B) \mapsto R_\theta(B)$ by
setting
\[
 R_\theta(B) \defeq \ev_B(g(\theta))
\]
for $\theta \in S$ and Borel sets $B \subset \R^d\times\R^d$, where 
$\ev_B(\mu) = \mu(B)$. Then $R_\theta \in \mM(\R^d\times\R^d)$ is a
coupling of $P_\theta$ and $Q_\theta$ for every $\theta \in S$. We are
left with showing that $\theta \mapsto R_\theta(B)$ is measurable for
any Borel set $B \subset \R^d\times\R^d$.  This follows because the
map $\ev_B: \mP_1(\R^d\times\R^d) \to \R$ is measurable by Lemma
\ref{lem:measures-to-kernel} in Appendix \ref{sec:measures-to-kernel}.
Hence $R$ is a pointwise coupling of the probability kernels $P$ and
$Q$.

If $P_\theta \leicx Q_\theta$ for all $\theta \in S$, then by
repeating the above construction with $\mM$ replaced by $\mM^* :=
\mM^*_2(\R^d)$ we obtain a probability kernel $R$ which is a pointwise
coupling of $P$ and $Q$ such that $R_\theta \in \mM^*$ for all
$\theta \in S$.

Finally we note that if $R$ is pointwise coupling of $P$ and $Q$ such
that $R_\theta \in \mM_2(\R^d)$ (resp.\ $\mM^*_2(\R^d)$) for all $\theta$,
then Theorem~\ref{th:strassen} immediately implies that $P_\theta
\lecx Q_\theta$ (resp.\ $P_\theta \leicx Q_\theta$) for all $\theta$.
\qed


\subsection{Proof of Theorem~\ref{cor:cond-strassen-geni}}
\label{sec:proof-other} 

Let us first prove the forward implication in
\eqref{item:cond-icx-strassen-geni}. Suppose that $X \mid Z \leicx Y
\mid Z$. Let $P$ and $Q$ be regular conditional distributions of $X$
and $Y$ given $Z$, respectively, and denote the distribution of $Z$ by
$\mu$. Then by Proposition~\ref{cor:cond-cx-char}, $P_\theta \leicx
Q_\theta$ for all $\theta \in S$ outside a set of $\mu$-measure zero.
By redefining $P_\theta$ and $Q_\theta$ as equal on this set of
$\mu$-measure zero, we may assume that $P_\theta \leicx Q_\theta$ for
all $\theta \in S$. By Theorem \ref{th:strassenKernel} there exists a
probability kernel $R$ from $S$ to $\R^d\times\R^d$ which is a
pointwise coupling of $P$ and $Q$ and satisfies $R_\theta \in
\mM^*_2(\R^d)$ for all $\theta$.

Let $(\hat{X},\hat{Y},\hat{Z})$ be a random element in $\R^d\times\R^d\times S$
with distribution
\[
 \lambda(\ud x\times \ud y\times \ud \theta) \defeq \mu(\ud \theta) R_\theta(\ud x \times \ud y),
\]
Because $\lambda(\ud x\times\R^d\times\ud \theta) = \mu(\ud \theta)
P_\theta(\ud x)$ and $\lambda(\R^d\times\ud y \times\ud \theta) =
\mu(\ud \theta) Q_\theta(\ud y)$, it follows that $(\hat X, \hat Z)
\deq (X,Z)$ and $(\hat Y, \hat Z) \deq (Y,Z)$. Hence
$(\hat{X},\hat{Y},\hat{Z})$ is a $Z$-conditional coupling of $X$ and
$Y$. We still need to verify that
\begin{equation}
 \label{eq:SubmartAs}
 \hat X \le \E( \hat Y \mid \hat X, \hat Z ) \quad\text{almost surely}.
\end{equation}
For any measurable $A \subset \R^d \times S$, by denoting $A_\theta
\defeq \{x \in \R^d \given (x,\theta)\in A\}$, we see with the help of
Lemma~\ref{lem:submart-char} that
\begin{align}
    \E\big[\E[\hat{Y}\mid
    \hat{X},\hat{Z}] \charfun{(\bigvphantom\smash{\hat{X}},\smash{\hat{Z}})\in
      A}\big]
    &= \E\big[\hat{Y}\charfun{(\bigvphantom\smash{\hat{X}},\smash{\hat{Z}})\in A}\big]
    \nonumber\\
    &= \int \mu(\ud \theta) \int y \charfun{x\in A_\theta} R_\theta(\ud x\times\ud y)
    \nonumber\\
    &\ge \int \mu(\ud \theta) \int x \charfun{x\in A_\theta} R_\theta(\ud x\times\ud y) 
    \label{eq:mart-prop} \\
    &= \E\big[\hat{X}
    \charfun{(\bigvphantom\smash{\hat{X}},\smash{\hat{Z}})\in A}\big],\nonumber
\end{align}
because $R_\theta \in \mM^*_2(\R^d)$ for all $\theta$.
This implies ~\eqref{eq:SubmartAs}.

To prove the other direction in \eqref{item:cond-icx-strassen-geni}, assume next that $(\hat X, \hat
Y, \hat Z)$ is a $Z$-conditional coupling of $X$ and $Y$
satisfying~\eqref{eq:SubmartAs}. Recall
Lemma~\ref{lem:cond-countable-char} and let $f\in\testiconvex$, then
conditional Jensen's inequality implies that
\[
 f(\hat X)
 \le f\big(  \E ( \hat Y \mid \hat X, \hat Z ) \big)
 \le \E \big( f(  \hat Y ) \bigmid \hat X, \hat Z \big)
\]
almost surely. By taking $\hat Z$-conditional expectations on both sides above, it follows that
\[
  \E \big( f( \hat X) \bigmid \hat Z \big)
  \le \E \big( f( \hat Y) \bigmid \hat Z \big).
\]
Because $(\hat X, \hat Z) \deq (X,Z)$ and $(\hat Y, \hat Z) \deq (Y,Z)$, we may remove the hats above to conclude that 
\begin{equation}
  \E \big( f(  X) \bigmid  Z \big)
  \le \E \big( f(  Y) \bigmid  Z \big)
    \label{eq:cond-exp-follows}
\end{equation}
almost surely. By Lemma \ref{lem:cond-countable-char}, this
implies $X \mid Z \leicx Y \mid Z$.

The proof of the forward implication of claim \eqref{item:cond-cx-strassen-geni} is obtained by
imitating the proof of \eqref{item:cond-icx-strassen-geni}; by replacing the inequality in
\eqref{eq:SubmartAs} and \eqref{eq:mart-prop} by equality, and
applying Lemma~\ref{lem:mart-char} in place of
Lemma~\ref{lem:submart-char}. Similarly, the reverse implication of
claim \eqref{item:cond-cx-strassen-geni} is obtained by using $f\in\testconvex$ in place of
$f\in\testiconvex$ in \eqref{eq:cond-exp-follows}.
\qed



\section{Application to pseudo-marginal Markov chain Monte Carlo} 
\label{sec:application} 

We discuss here briefly the application 
which initially motivated the present work.  
The application focuses on Markov chain Monte
Carlo (MCMC) algorithms targeting a probability distribution $\pi$ on
a general state space $\mathsf{X}$.  In particular, the interest lies in the 
so-called pseudo-marginal MCMC with transition probability
\[
    K(x,w;\ud y\times \ud u)
    \defeq q(x,\ud y) Q_y(\ud u) \min\bigg\{1,
      r(x,y) \frac{u}{w}\bigg\} + \ind_{\ud y\times \ud u}(x,t) \rho(x,w),
\]
parametrised by a proposal kernel $(x,B)\mapsto q(x,B)$ on $\mathsf{X}$
and an auxiliary kernel $(x,B)\mapsto Q_x(B)$ from $\mathsf{X}$ to
$\R_+$, satisfying $\int Q_x(\ud w) w = 1$ for every $x\in\mathsf{X}$.
The function $r(x,y)$ is the Radon-Nikodym derivative 
$\frac{\pi(\ud y)}{\pi(\ud x)}\frac{q(y,\ud x)}{q(x,\ud y)}$ 
whenever well-defined, and zero otherwise \cite[cf.][]{tierney-note},
and the `probability of rejection'
$\rho(x,w)\in[0,1]$ is such that $K$ defines a
transition probability. We advise an interested
reader to consult \cite{andrieu-vihola-order} for details and
\cite{andrieu-vihola-pseudo,andrieu-roberts} and references therein
for more thorough introduction to the method.

It is not difficult to check that $K$ is reversible with respect to 
the distribution
\[
    \tilde{\pi}(\ud x\times \ud w)
    = \pi(\ud x) Q_x(\ud w) w,
\]
and it is evident that
$\tilde{\pi}$ admits $\pi$ as its first marginal. 
This means that, if the Markov
chain $(X_k,W_k)_{k\ge 1}$ with transition probability $K$
is irreducible, the ergodic averages
approximate the integral of any $\pi$-integrable function
$f:\mathsf{X}\to\R$:
\begin{equation*}
    \frac{1}{n} \sum_{k=1}^n f(X_k) \xrightarrow{n\to\infty}
    \pi(f) \defeq 
 \int_{\mathsf{X}} f(x) \pi(\ud x).
\end{equation*}

The so-called \emph{asymptotic variance} is a common MCMC efficiency criterion,
which is informative about the asymptotic rate of convergence above.
It is defined in the present setting for any $f\in L^2(\pi) \defeq
\big\{f:\mathsf{X}\to\R \given \pi(f^2)<\infty\}$ through
\begin{equation}
    \sigma^2(K,f) \defeq \lim_{n\to\infty} \E\bigg[ \frac{1}{\sqrt{n}}
    \sum_{k=1}^n \big[f(X'_k) - \pi(f)\big]\bigg]^2,
    \label{eq:asvar}
\end{equation}
where $(X'_k,W'_k)_{k\ge 1}$ is a stationary version of the MCMC
chain---that is, $(X'_1,W'_1)\sim \tilde{\pi}$ and $(X'_k,W'_k)_{k\ge
1}$ follows the transition probability $K$. The limit in
\eqref{eq:asvar} always exists, but can be infinite
\cite{tierney-note}. In the pseudo-marginal context, we are interested
in how the choice of the laws $\{Q_x\}_{x\in\mathsf{X}}$ affects the
asymptotic variance.

The usual method to compare asymptotic variances of reversible
Markov chains is Peskun's theorem 
\cite{peskun} and its generalisations
\cite{tierney-note,caracciolo-pelissetto-sokal,leisen-mira}.
It states that if two Markov transition probabilities 
$K$ and $K'$ are reversible with
respect to the same probability distribution $\mu$, then
\[
    \sigma^2(K,f) \le \sigma^2(K',f) 
    \qquad\text{for all $f\in L^2(\mu)$,}
\]
if and only if
\[
    \langle g, K g \rangle_\mu \le 
    \langle g, K' g \rangle_\mu \qquad\text{for all $g\in L^2(\mu)$,}
\]
where $\langle f, g \rangle_\mu \defeq \int f(x) g(x)\mu(\ud x)$.
This is inapplicable in the present application, as the two Markov
chains $K$ and $K'$ with $\{Q_x\}_{x\in\mathsf{X}}$ and
$\{Q'_x\}_{x\in\mathsf{X}}$ are reversible with respect to different
invariant distributions $\tilde{\pi}$ and
$\tilde{\pi}'$, respectively.

Because the interest lies only in functions which are constant in the
second coordinate, it is still possible to pursue such an ordering.
Indeed, given a pointwise martingale coupling $R_x$ of $Q_x$ and
$Q'_x$, which exists by Theorem \ref{th:strassenKernel} if $Q_x \lecx
Q'_x$ for all $x\in\mathsf{X}$, it turns out to be possible to deduce
a `Peskun-type' order of the asymptotic variances \cite[Theorem
10]{andrieu-vihola-order}
\begin{equation}
    \sigma^2(K,f)\le \sigma^2(K',f)\quad\text{for all}\quad
    f(x,w)=f(x) \in L^2(\pi).
    \label{eq:pm-var-order}
\end{equation}
We will next briefly summarise why a strong
martingale coupling as in Theorem \ref{th:strassenKernel} is
fundamental to prove this result. 

The key of the proof of \eqref{eq:pm-var-order}
relies in `embedding' the two Markov kernels $K$ and
$K'$ on a common Hilbert space.
The martingale coupling allows to construct the following Markov kernels 
$\breve{K}$ and $\breve{K}'$ and a distribution 
$\breve{\pi}$:
\begin{align*}
    \breve{K}(x,w,v;\ud y\times\ud u\times\ud t)
    &=q(x,\ud y)R_y(\ud u\times\ud t)\frac{t}{u}
    \min\Big\{1,r(x,y)\frac{u}{w}\Big\} \\
    &\phantom{=}+ \ind_{\ud y\times\ud
      u\times\ud t}(x,w,v) \rho(x,w) \\
    \breve{K}'(x,w,v;\ud y\times\ud u\times\ud t)
    &=q(x,\ud y)R_y(\ud u\times\ud t)
    \min\Big\{1,r(x,y)\frac{t}{v}\Big\} \\
    &\phantom{=}+ \ind_{\ud y\times\ud
      u\times\ud t}(x,w,v) \rho'(x,v) \\    
    \breve{\pi}(\ud x\times \ud t\times \ud u)
    &= \pi(\ud x) R_x(\ud w \times \ud v) v.
\end{align*}
It is not difficult to check that both $\breve{K}$ and $\breve{K}'$
are reversible with respect to $\breve{\pi}$, and $\breve{\pi}$
coincides marginally with $\tilde{\pi}$ and $\tilde{\pi}'$ so that
$\tilde{\pi}(\ud x\times \ud w) = \breve{\pi}(\ud x\times \ud w \times
\R_+)$ and $\tilde{\pi}'(\ud x\times \ud v) = \breve{\pi}(\ud x\times
\R_+ \times \ud v)$. Similarly, the kernels $\breve{K}$ and
$\breve{K}'$ coincide marginally with $K$ and $K'$; see \cite[Lemma
20]{andrieu-vihola-order}. This construction enables the Hilbert
space techniques, on $L^2(\breve{\pi})$, to be used. The martingale
coupling allows to show that \cite[Theorem 22(b)]{andrieu-vihola-order}
\[
    \langle g, \breve{K} g \rangle_{\breve{\pi}} \le 
    \langle g, \breve{K'} g \rangle_{\breve{\pi}} 
    \qquad\text{for all $g(x,w,v) =
      g(x,w)$ with $g\in
      L^2(\tilde{\pi})$,}
\]
which ultimately leads to the order $\sigma^2(K,f) \le \sigma^2(K',f)$
for all $f(x,w)=f(x)$ with $f\in L^2(\pi)$.


\section{Extensions and implications} 
\label{sec:extensions} 

We discuss next some extensions and implications of our results.
In Strassen's original paper, Theorem \ref{th:strassen} is formulated 
for countably many distributions instead of a pair. 
Extension of Theorems~\ref{th:strassenKernel} and \ref{cor:cond-strassen-geni}
into a context with countably many kernels
is straightforward. For instance, we may
formulate the following result.
\begin{proposition} 
    \label{prop:countable-gen} 
Suppose that for each $i\in\N$, 
$(\theta,B)\mapsto P_\theta^{(i)}(B)$ is a probability kernel from $S$ to $\R^d$.
\begin{enumerate}[(i)]
\item \label{item:countable-coupling-cx} 
  $P_\theta^{(i)}\lecx P_\theta^{(i+1)}$ for all $i\in\N$ if and only
if there exists a pointwise coupling $R$ of
$\{P_\theta^{(i)}\}_{i\in\N}$ such that
$R_\theta\in\mM_{\N}(\R^d)$. 
\item \label{item:countable-coupling-icx}
  $P_\theta^{(i)}\leicx P_\theta^{(i+1)}$ for all $i\in\N$ if and only
if there exists a pointwise coupling $R$ of
$\{P_\theta^{(i)}\}_{i\in\N}$ such that
$R_\theta\in\mM^*_{\N}(\R^d)$. 
\end{enumerate}
More precisely, $R$ above is a kernel from $S$ to
$(\R^d)^{\N}$ such that $R_\theta(\uarg)$ is the law of the
$\R^d$-valued (sub-)martingale $(X_\theta^{(i)})_{i\ge 1}$ such that
$\P(X_\theta^{(i)}\in A) = P_\theta^{(i)}(A)$. 
\end{proposition}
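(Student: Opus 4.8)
I would prove this by reducing to finitely many coordinates, applying Theorem~\ref{th:strassenKernel} to consecutive pairs, and gluing the resulting kernels together with a measurable Ionescu--Tulcea construction; I describe the convex case~\eqref{item:countable-coupling-cx}, case~\eqref{item:countable-coupling-icx} being obtained by systematically replacing martingales by submartingales and the equalities below by the corresponding inequalities. The ``if'' direction is immediate from Strassen's theorem: if $R$ is a pointwise coupling of $\{P^{(i)}_\theta\}_{i\in\N}$ with $R_\theta\in\mM_\N(\R^d)$, then fixing $i$ and pushing $R_\theta$ forward under $(x_j)_{j\in\N}\mapsto(x_i,x_{i+1})$ yields a coupling of $P^{(i)}_\theta$ and $P^{(i+1)}_\theta$ which, by the tower property, is the law of a two-parameter martingale and hence lies in $\mM_2(\R^d)$; Theorem~\ref{th:strassen} then gives $P^{(i)}_\theta\lecx P^{(i+1)}_\theta$ for every $i$ and $\theta$.

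For the ``only if'' direction, assume $P^{(i)}_\theta\lecx P^{(i+1)}_\theta$ for all $i$ and $\theta$ (with finite first moments understood, as needed to invoke Theorem~\ref{th:strassenKernel}). The plan is to construct, by induction on $n$, probability kernels $R^{(n)}$ from $S$ to $(\R^d)^n$ with $R^{(n)}_\theta\in\mM_n(\R^d)$ and coordinate marginals $P^{(1)}_\theta,\dots,P^{(n)}_\theta$, such that $R^{(n)}$ is the pushforward of $R^{(n+1)}$ under the projection onto the first $n$ coordinates. Take $R^{(1)}_\theta=P^{(1)}_\theta$, and for $n=2$ take the kernel furnished by Theorem~\ref{th:strassenKernel}. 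Given $R^{(n)}$, apply Theorem~\ref{th:strassenKernel} to the pair $P^{(n)},P^{(n+1)}$ to obtain a pointwise coupling $R'$ with $R'_\theta\in\mM_2(\R^d)$, and disintegrate $R'_\theta$ along its first marginal $P^{(n)}_\theta$ in a way depending measurably on $\theta$; this gives a probability kernel $\kappa$ from $S\times\R^d$ to $\R^d$ with $R'_\theta(\ud x\times\ud y)=P^{(n)}_\theta(\ud x)\,\kappa(\theta,x;\ud y)$, whence $\int y\,\kappa(\theta,x;\ud y)=x$ for $P^{(n)}_\theta$-almost every $x$ by the martingale characterisation in Lemma~\ref{lem:mart-char}. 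I would then set
\[
  R^{(n+1)}_\theta(\ud x_1\times\cdots\times\ud x_{n+1})
  \defeq R^{(n)}_\theta(\ud x_1\times\cdots\times\ud x_n)\,\kappa(\theta,x_n;\ud x_{n+1}),
\]
i.e.\ append $X^{(n+1)}$ conditionally independently of $X^{(1)},\dots,X^{(n-1)}$ given $X^{(n)}$. Consistency is built in; the $(n+1)$-st marginal equals $\int P^{(n)}_\theta(\ud x_n)\,\kappa(\theta,x_n;\uarg)=P^{(n+1)}_\theta$ since $R'_\theta$ has second marginal $P^{(n+1)}_\theta$; and since $\E[X^{(n+1)}\mid X^{(1)},\dots,X^{(n)}]=\int y\,\kappa(\theta,X^{(n)};\ud y)=X^{(n)}$ almost surely, combining this with the martingale property of $R^{(n)}_\theta$ shows $R^{(n+1)}_\theta\in\mM_{n+1}(\R^d)$.

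Finally, the initial kernel $\theta\mapsto P^{(1)}_\theta$ together with the transition kernels $(\theta,x_1,\dots,x_n)\mapsto\kappa(\theta,x_n;\uarg)$ satisfy the hypotheses of the Ionescu--Tulcea theorem with parameter $\theta$, so there is a probability kernel $R$ from $S$ to $(\R^d)^\N$ whose pushforward to the first $n$ coordinates is $R^{(n)}$ for every $n$; its coordinate marginals are the $P^{(i)}$, and since the martingale property of $R_\theta$ involves only finitely many coordinates at a time it is inherited from the $R^{(n)}_\theta\in\mM_n(\R^d)$, so $R_\theta\in\mM_\N(\R^d)$ for all $\theta$. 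I expect the only genuinely delicate point to be the inductive gluing step, namely obtaining the disintegration $\kappa$ measurably in $\theta$; this is the parameter-dependent form of the existence of regular conditional distributions (valid because $\R^d$ is Polish), which one invokes as a standard gluing lemma for probability kernels and which is precisely the ingredient that upgrades the pairwise Theorem~\ref{th:strassenKernel} to the countable statement. The increasing-convex case goes through verbatim with $\mM^*_n$, $\mM^*_2$ and $\mM^*_\N$ in place of their martingale counterparts and with $\int y\,\kappa(\theta,x;\ud y)\ge x$, using Lemma~\ref{lem:submart-char} instead of Lemma~\ref{lem:mart-char}.
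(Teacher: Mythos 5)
Your proposal is correct and follows essentially the same route as the paper's proof: apply Theorem~\ref{th:strassenKernel} to each consecutive pair, disintegrate the resulting pointwise martingale couplings into transition kernels measurable in $(\theta,x)$ (the paper's kernels $T^{(i)}_{\theta,x_{i-1}}$, your $\kappa$), and build $R_\theta$ from its finite-dimensional distributions via the Ionescu--Tulcea/Markovian composition, with the converse following from Jensen's inequality. Your write-up merely spells out the marginal and (sub)martingale verifications and the extension step more explicitly than the paper does.
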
 
\begin{proof} 
For, \eqref{item:countable-coupling-cx} assume that for each $i\in\N$
$P_\theta^{(i)}\lecx P_\theta^{(i+1)}$ and let
$R_\theta^{(i)}$ stand for their pointwise coupling.
There exist kernels $T^{(i)}$ from $S\times \R^d$ to $\R^d$ (regular conditional
probabilities) such that
\[
    R_\theta^{(i)}(\ud x_{i-1}\times \ud x_i) = P_\theta^{(i-1)}(\ud
    x_{i-1}) T^{(i)}_{\theta, x_{i-1}}(\ud x_i).
\]
We may define $R_\theta$ inductively through its finite-dimensional
distributions by letting $R_\theta(\ud x_1\times \ud x_2\times\R^{\N}) =
    R_\theta^{(2)}(\ud x_1\times \ud x_2)$ and for $i\ge 3$
\[
     R_\theta(\ud x_1\times\cdots \times \ud x_i\times\R^{\N})
    = R_\theta(\ud x_1\times\cdots \times \ud x_{i-1}\times\R^{\N})
    T_{\theta, x_{i-1}}^{(i)}(\ud x_i).
\]
The other direction follows from Jensen's inequality.
The proof of \eqref{item:countable-coupling-icx}
follows similar lines.
\end{proof} 

The following characterisation of increasing convex orders in terms of
convex stochastic order and strong stochastic order \cite[Theorem
  3.4.3]{muller-stoyan} is sometimes convenient.
\begin{theorem} 
    \label{thm:icx-strong-cx} 
If $X\leicx Y$ then there exist a probability space with random variables
$\hat{X},\hat{W},\hat{Y}$ such that $\hat{X}\eqd X$, $\hat{Y}\eqd Y$, 
$\hat{X}\le \hat{W}$ almost surely and $\hat{W}\lecx \hat{Y}$.
\end{theorem}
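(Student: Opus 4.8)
The plan is to obtain Theorem~\ref{thm:icx-strong-cx} as a direct consequence of the submartingale coupling characterisation in Theorem~\ref{th:strassenRandom}. Assume $X \leicx Y$; in particular $X$ and $Y$ are integrable, which is implicit in the definition of $\leicx$ (otherwise the test integrals need not be finite). By Theorem~\ref{th:strassenRandom}(ii) there is a coupling $(\hat X, \hat Y)$, defined on some probability space $(\Omega,\mA,\P)$, with $\hat X \eqd X$, $\hat Y \eqd Y$ and $\hat X \le \E(\hat Y \mid \hat X)$ almost surely.

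Next I would set $\hat W$ to be a fixed measurable version of the conditional expectation $\E(\hat Y \mid \hat X)$. Since this is a $\sigma(\hat X)$-measurable random vector on the same probability space, the triple $\hat X, \hat W, \hat Y$ all live on $(\Omega,\mA,\P)$, and $\hat X \eqd X$, $\hat Y \eqd Y$ hold by construction. The inequality $\hat X \le \hat W$ almost surely is exactly the submartingale property above. For the order $\hat W \lecx \hat Y$, I would invoke conditional Jensen: for any convex $\phi : \R^d \to \R_+$ one has $\phi(\hat W) = \phi\big(\E(\hat Y \mid \hat X)\big) \le \E\big(\phi(\hat Y) \mid \hat X\big)$ almost surely, hence $\E\phi(\hat W) \le \E\phi(\hat Y)$, which is precisely $\hat W \lecx \hat Y$. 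Along the way one records $\E|\hat W| \le \E\,\E(|\hat Y| \mid \hat X) = \E|\hat Y| < \infty$, so $\hat W$ is integrable and the order is meaningful.

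There is no substantial obstacle here: the statement is essentially a reformulation of Theorem~\ref{th:strassenRandom}(ii), with $\hat W = \E(\hat Y \mid \hat X)$ playing the role of the martingale "target". The only points needing a line of care are the choice of a measurable version of $\hat W$ and the integrability bookkeeping just noted, both routine. If one preferred to avoid citing Theorem~\ref{th:strassenRandom}, one could instead build the submartingale coupling directly (as in the proof of Strassen's theorem) and then proceed identically, but reusing the coupling theorem is the shortest route, and the same argument works verbatim for $\R^d$-valued vectors and not merely in the scalar case.
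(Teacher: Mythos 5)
Your proposal is correct, and it is essentially the argument the paper itself uses: the paper cites M\"uller--Stoyan for this unconditional statement, but its proof of the conditional analogue immediately below proceeds exactly as you do, taking the (sub)martingale coupling from the coupling theorem and setting $\hat{W}=\E(\hat{Y}\mid\hat{X})$ (there, $\E(\hat{Y}\mid\hat{X},\hat{Z})$), with conditional Jensen giving $\hat{W}\lecx\hat{Y}$. Your integrability bookkeeping and the reduction to Theorem~\ref{th:strassenRandom}(ii) are fine as stated.
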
 
We record the following result, which is a conditional version 
Theorem \ref{thm:icx-strong-cx}.
\begin{proposition} 
If $X\mid Z \leicx Y\mid Z$ then there exist a probability space with 
random variables
$\hat{X}$, $\hat{Y}$, $\hat{Z}$ and $\hat{W}$ such that 
$(\hat{X},\hat{Z})\eqd (X,Z)$, 
$(\hat{Y},\hat{Z})\eqd (Y,Z)$, 
$\hat{X} \le \hat{W}$ almost surely and 
$\hat{W}\mid \hat{Z}\lecx \hat{Y}\mid\hat{Z}$.
\end{proposition}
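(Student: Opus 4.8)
The plan is to reduce the conditional statement to the kernel setting (Theorem~\ref{th:strassenKernel}), just as in the proof of Theorem~\ref{cor:cond-strassen-geni}, but now constructing an intermediate kernel $W$ pointwise. First I would pass to regular conditional distributions: let $P$ and $Q$ be regular conditional distributions of $X$ and $Y$ given $Z$, and $\mu$ the law of $Z$. By Proposition~\ref{cor:cond-cx-char}, the hypothesis $X\mid Z\leicx Y\mid Z$ gives $P_\theta\leicx Q_\theta$ for $\mu$-almost every $\theta$; redefining $P_\theta=Q_\theta$ on the exceptional null set, I may assume $P_\theta\leicx Q_\theta$ for \emph{all} $\theta\in S$.

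\textbf{Constructing the intermediate kernel.} The heart of the argument is a pointwise-measurable version of Theorem~\ref{thm:icx-strong-cx}: I want a probability kernel $\theta\mapsto W_\theta$ from $S$ to $\R^d$ such that $P_\theta\le W_\theta$ in the strong stochastic order and $W_\theta\lecx Q_\theta$ for every $\theta$. I would obtain this by the same measurable-selection machinery used for Theorem~\ref{th:strassenKernel}. Consider the set-valued map sending $\theta$ to the set of probability measures $\nu\in\mP_1(\R^d)$ with $P_\theta\le\nu$ (strong order) and $\nu\lecx Q_\theta$; by Theorem~\ref{thm:icx-strong-cx} this set is nonempty for every $\theta$. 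One then checks it is closed (the strong order can be tested against countably many nondecreasing bounded continuous functions, and $\lecx$ against $\testconvex$, so the constraints are closed conditions; combined with $\mP_1$-closedness arguments as in Lemma~\ref{the:MartingaleClosed}) and has appropriate compactness properties. Measurability of the set-valued map in $\theta$ follows from measurability of $\theta\mapsto P_\theta$ and $\theta\mapsto Q_\theta$ (Lemma~\ref{the:KernelStrongMeasurable}) together with the type of preimage arguments in Section~\ref{sec:set-valued}. The Kuratowski--Ryll-Nardzewski selection theorem then yields a measurable selector, i.e.\ a probability kernel $W$; measurability of $\theta\mapsto W_\theta(B)$ for Borel $B$ follows as in Section~\ref{sec:proof-conclusion} via Lemma~\ref{lem:measures-to-kernel}. (Alternatively, one can first produce a martingale coupling $R$ of some $W$ with $Q$ via Theorem~\ref{th:strassenKernel} once $W$ is in hand, but the cleaner route is to factor $\leicx$ through $\le$ and $\lecx$ at the kernel level directly; the existence of the strong ordering kernel $W$ with $W\lecx Q$ pointwise is really a joint application of a measurable selection for the strong-order step.)

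\textbf{Assembling the coupling.} Having $W$, I would now build the joint law. Choose, for each $\theta$, a coupling $S_\theta$ of $P_\theta$ and $W_\theta$ concentrated on $\{x\le w\}$ (this exists because $P_\theta\le W_\theta$ in the strong order), done measurably in $\theta$ by another selection argument exactly parallel to Proposition~\ref{prop:couplling-measurable}—the set of such couplings is $\Gamma(P_\theta,W_\theta)$ intersected with the closed set $\{\lambda:\lambda(\{x\le w\})=1\}$, which is again measurable and compact-valued with nonempty values. Then define the random element $(\hat X,\hat W,\hat Y,\hat Z)$ on $\R^d\times\R^d\times\R^d\times S$ with law $\mu(\ud\theta)\,S_\theta(\ud x\times\ud w)$ extended (using a further regular conditional probability / kernel for the $\lecx$ step, or directly via the martingale coupling of $W$ and $Q$ from Theorem~\ref{th:strassenKernel}) to also carry $\hat Y$. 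Reading off marginals gives $(\hat X,\hat Z)\eqd(X,Z)$ and $(\hat Y,\hat Z)\eqd(Y,Z)$; the support condition gives $\hat X\le\hat W$ a.s.; and since the conditional law of $(\hat W,\hat Y)$ given $\hat Z=\theta$ has $W_\theta\lecx Q_\theta$, Proposition~\ref{cor:cond-cx-char} (applied in the direction from pointwise $\lecx$ of conditional distributions to the conditional order) gives $\hat W\mid\hat Z\lecx\hat Y\mid\hat Z$.

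\textbf{Main obstacle.} The delicate point is the measurable-selection step for the intermediate kernel $W$: one must verify that the set-valued map $\theta\mapsto\{\nu:P_\theta\le\nu,\ \nu\lecx Q_\theta\}$ is measurable with closed (hence, after a tightness/uniform-integrability bound, compact) values in $\mP_1(\R^d)$. Nonemptiness is handed to us by Theorem~\ref{thm:icx-strong-cx}; closedness requires care because the strong stochastic order, while expressible via countably many test functions, interacts with the $\mP_1$-topology and one must ensure uniform integrability of any limiting sequence of candidate measures (a bound of the form $\nu\lecx Q_\theta$ controls tails of $\nu$ by tails of $Q_\theta$, which suffices). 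Everything else is a routine repetition of the arguments already developed in Sections~\ref{sec:set-valued}--\ref{sec:proof-other}.
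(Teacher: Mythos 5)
Your argument is essentially correct, but it takes a genuinely different and much heavier route than the paper. The paper's proof is a two-line observation: take the $Z$-conditional submartingale coupling $(\hat X,\hat Y,\hat Z)$ already supplied by Theorem~\ref{cor:cond-strassen-geni}\,(ii) and simply define $\hat W \defeq \E(\hat Y\mid \hat X,\hat Z)$; the submartingale property gives $\hat X\le\hat W$ almost surely, and conditional Jensen's inequality gives $\E(\phi(\hat W)\mid\hat Z)\le\E(\E(\phi(\hat Y)\mid\hat X,\hat Z)\mid\hat Z)=\E(\phi(\hat Y)\mid\hat Z)$ for every convex $\phi$, i.e.\ $\hat W\mid\hat Z\lecx\hat Y\mid\hat Z$. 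No new selection argument is needed. You instead re-run the whole measurable-selection machinery to build an intermediate kernel $\theta\mapsto W_\theta$ with $P_\theta\le_{\mathrm{st}}W_\theta$ and $W_\theta\lecx Q_\theta$, in effect proving a kernel-level version of Theorem~\ref{thm:icx-strong-cx}, and then need a second selection (for a monotone coupling of $P_\theta$ and $W_\theta$) before gluing. This can be made to work: nonemptiness comes from Theorem~\ref{thm:icx-strong-cx}, closedness of $\{\nu: P_\theta\le_{\mathrm{st}}\nu,\ \nu\lecx Q_\theta\}$ from testing with bounded continuous increasing functions and with $\testconvex$ (Lemma~\ref{lem:countable-char}), and compactness from the tail bound $\int(|x|-t)_+\,\nu(\ud x)\le\int(|x|-t)_+\,Q_\theta(\ud x)$, which yields uniform integrability. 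What your proposal buys is a stronger by-product (a measurable intermediate kernel $W$, i.e.\ a pointwise/kernel analogue of Theorem~\ref{thm:icx-strong-cx}), at the cost of considerable extra work; the paper's choice $\hat W=\E(\hat Y\mid\hat X,\hat Z)$ extracts the intermediate variable directly from the coupling it has already built.

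One caveat on the step you yourself flag as delicate: measurability of $\theta\mapsto\{\nu: P_\theta\le_{\mathrm{st}}\nu,\ \nu\lecx Q_\theta\}$ is not literally a ``preimage argument'' as in Proposition~\ref{prop:couplling-measurable}, because this set is not the preimage of a point under a marginalising map. You need a projection argument: write the constraint set as a section of the closed set $C=\{(\mu,\nu,\rho):\mu\le_{\mathrm{st}}\nu,\ \nu\lecx\rho\}$ in $\mP_1(\R^d)^3$, note that for closed $A$ the relevant $\nu$-sections of $C\cap(\mP_1(\R^d)\times A\times\mP_1(\R^d))$ are compact by the uniform-integrability bound above, and invoke Novikov's theorem (as in Lemma~\ref{the:BorelImages}) to see that the projection onto the $(\mu,\rho)$-coordinates is Borel; composing with the measurable map $\theta\mapsto(P_\theta,Q_\theta)$ of Lemma~\ref{the:KernelStrongMeasurable} then gives measurability of the set-valued inverse. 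With that spelled out, your construction goes through, but it is substantially longer than the argument the paper actually uses.
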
 
\begin{proof} 
We may take the triple $(\hat{X},\hat{Y},\hat{Z})$ from 
Theorem \ref{cor:cond-strassen-geni} and 
set $\hat{W}\defeq \E(\hat{Y}\mid\hat{X},\hat{Z})$.
Then $\hat{X}\le \hat{W}$, and
for any convex $\phi:\R^d\to\R$, Jensen's inequality yields 
$    \E\big(\phi(\hat{W})\bigmid \hat{Z}\big)
    \le 
    \E\big(\phi(\hat{Y}) \bigmid \hat{Z}\big).$
\end{proof} 

We next turn into so-called martingale optimal transport problem
\cite{beiglboeck-juillet}. which is linked to applications in
mathematical
finance~\cite[e.g.][]{dolinsky-soner,beiglbock-henry-labordere-penkner,galichon-henry-labordere-touzi,hobson-neuberger}.
Optimal transport problems, in general, mean finding a coupling of two
probability measures $\mu$ such that the `cost' $\mu(c) \defeq \iint
c(x,y)\mu(\ud x\times \ud y)$ is minimised. Usually the minimisation
is over all couplings, but in the martingale optimal transport the
minimisation is constrained to martingale couplings.

We illustrate that when a parametric version of such a problem is
considered, our results allow to ensure that minimisers
can be chosen in a measurable manner in this context.
\begin{proposition} 
    \label{prop:measurable-optimal-transport} 
Suppose that $P$ and $Q$ are probability kernels
from $S$ to $\R^d$
and $c:\R^d\times\R^d\to(-\infty,\infty]$ is lower
semi-continuous and satisfies the lower bound
\begin{equation*}
    c(x,y) \ge - C(1+|x|+|y|) \qquad\text{for all $x,y\in\R^d$}
\end{equation*}
for some finite constant $C$.
\begin{enumerate}[(i)]
    \item 
      \label{item:martingale-transport}
      If $P_\theta \lecx Q_\theta$ for all $\theta$, 
then there exists a measurable optimal
martingale transport plan $\gamma$, that is,
a kernel $(\theta,B)\mapsto \gamma_\theta(B)$ from $S$ to
$\R^d\times\R^d$ such that for every $\theta$, $\gamma_\theta$ is a
martingale coupling of $P_\theta$ and $Q_\theta$ which minimises
$\mu(c)$ over all martingale couplings $\mu$ of $P_\theta$ and $Q_\theta$.
    \item 
      \label{item:transport}
      If $P_\theta,Q_\theta\in\mP_1(\R^d)$, then
      there exists a measurable optimal transport plan
      $\gamma^*$,
      that is, a kernel $(\theta,B)\mapsto \gamma^*_\theta(B)$ from $S$ to
$\R^d\times\R^d$ such that for every $\theta$, $\gamma^*_\theta$ is a
coupling of $P_\theta$ and $Q_\theta$ which minimises
$\mu(c)$ over all couplings $\mu$ of $P_\theta$ and $Q_\theta$.
\end{enumerate}
\end{proposition}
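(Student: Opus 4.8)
The plan is to apply the Kuratowski--Ryll-Nardzewski selection theorem \cite{kuratowski-ryll-nardzewski} to the set-valued mapping of \emph{optimal} couplings, reusing the measurability infrastructure built for Theorem~\ref{th:strassenKernel}. For part~\eqref{item:martingale-transport} I would put $G(\theta) \defeq \Gamma(P_\theta,Q_\theta) \cap \mM_2(\R^d)$, the set of martingale couplings of $P_\theta$ and $Q_\theta$; for part~\eqref{item:transport} I would put $G(\theta) \defeq \Gamma(P_\theta,Q_\theta)$. In both cases the arguments in the proof of Theorem~\ref{th:strassenKernel} (via Proposition~\ref{prop:couplling-measurable}, Lemma~\ref{the:CouplingCompact} and Lemma~\ref{the:MartingaleClosed}) show that $G$ is a measurable set-valued mapping from $S$ to $\mP_1(\R^d\times\R^d)$ with nonempty compact values; nonemptiness in case~\eqref{item:martingale-transport} follows from Strassen's theorem (Theorem~\ref{th:strassen}) together with the hypothesis $P_\theta \lecx Q_\theta$, and in case~\eqref{item:transport} it is trivial since $P_\theta\otimes Q_\theta \in \Gamma(P_\theta,Q_\theta)$, with no order relation required.

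Next I would record the standard fact that the cost functional $h(\mu) \defeq \mu(c) = \int c \,\ud\mu$ is well defined with values in $(-\infty,\infty]$ and lower semicontinuous on $\mP_1(\R^d\times\R^d)$. Writing $\ell(x,y) \defeq C(1+|x|+|y|)$, the function $c+\ell$ is nonnegative and lower semicontinuous, so along any sequence $\mu_n \to \mu$ for $d_W$ the portmanteau theorem gives $\liminf_n \mu_n(c+\ell) \ge \mu(c+\ell)$, while $\mu_n(\ell) \to \mu(\ell) < \infty$ because $d_W$-convergence entails convergence of integrals of continuous functions of at most linear growth; subtracting yields $\liminf_n h(\mu_n) \ge h(\mu)$. (Alternatively one may quote \cite[Section~7.1]{ambrosio-gigli-savare}.) In particular each sublevel set $\{\mu : h(\mu) \le a\}$, $a\in\R$, is closed in $\mP_1(\R^d\times\R^d)$.

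The heart of the matter is a measurable-optimisation lemma: if $G$ is a measurable compact-valued mapping from $(S,\mS)$ into a Polish space and $h$ is lower semicontinuous, then $\theta \mapsto m(\theta) \defeq \min_{\mu\in G(\theta)} h(\mu)$ is measurable and $G^*(\theta) \defeq \{\mu \in G(\theta) : h(\mu) = m(\theta)\}$ is again a measurable set-valued mapping with nonempty compact values. Measurability of $m$ follows from
\[
 \{\theta : m(\theta) < a\} = \bigcup_{n\in\N} G^-\big(\{\mu : h(\mu) \le a - 1/n\}\big),
\]
a countable union of measurable sets since $G$ is measurable and the sets $\{\mu:h(\mu)\le a-1/n\}$ are closed. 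For $G^*$, fix a closed set $A \subset \mP_1(\R^d\times\R^d)$; then $\tilde G(\theta) \defeq G(\theta)\cap A$ is again measurable and compact-valued, with nonempty values exactly on the measurable set $G^-(A)$, so $\tilde m(\theta) \defeq \min_{\mu\in\tilde G(\theta)} h(\mu)$ is well defined and measurable on $G^-(A)$ by the same argument as for $m$; one then checks $(G^*)^-(A) = \{\theta \in G^-(A) : \tilde m(\theta) = m(\theta)\}$, which is measurable. Nonemptiness and compactness of $G^*(\theta)$ come from compactness of $G(\theta)$ and lower semicontinuity of $h$, which together guarantee that the minimum defining $m(\theta)$ is attained.

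Finally, applying the Kuratowski--Ryll-Nardzewski theorem to $G^*$ yields a measurable selection $g^*: S \to \mP_1(\R^d\times\R^d)$ with $g^*(\theta) \in G^*(\theta)$ for all $\theta$, and I set $\gamma_\theta(B) \defeq \ev_B(g^*(\theta))$ for Borel $B\subset\R^d\times\R^d$ (and $\gamma^*_\theta(B)$ analogously in case~\eqref{item:transport}). Measurability of $\ev_B$ (Lemma~\ref{lem:measures-to-kernel}) shows, exactly as in the proof of Theorem~\ref{th:strassenKernel}, that $\gamma$ is a probability kernel, and by construction $\gamma_\theta$ is, for every $\theta$, a martingale coupling (respectively, a coupling) of $P_\theta$ and $Q_\theta$ minimising $\mu(c)$ over the relevant class. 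The main obstacle is the measurable-optimisation lemma: since $c$ is only lower semicontinuous, one cannot simply intersect $G(\theta)$ with an open set of the form $\{h<a\}$, and must instead route the argument through the closed sublevel sets of $h$ and the auxiliary intersected map $\tilde G$; it is also here that compactness of $G(\theta)$ — rather than mere closedness — is essential, both for attainment of the minimum and hence for the nonemptiness of $G^*(\theta)$.
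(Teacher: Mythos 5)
Your proof is correct, and it reaches the conclusion by a genuinely different route than the paper. Both arguments share the same skeleton: measurability and compactness of $\theta\mapsto\Gamma(P_\theta,Q_\theta)\cap\mM_2(\R^d)$ (resp.\ $\Gamma(P_\theta,Q_\theta)$) from Proposition~\ref{prop:couplling-measurable} and Lemmas~\ref{the:CouplingCompact}, \ref{the:MartingaleClosed}, lower semicontinuity of $\mu\mapsto\mu(c)$ on $\mP_1(\R^d\times\R^d)$ under the lower bound $c\ge-C(1+|x|+|y|)$, and a final appeal to Kuratowski--Ryll-Nardzewski plus the evaluation map $\ev_B$. Where you diverge is in how the argmin mapping is shown to be measurable: the paper introduces the rational sublevel sets $L_q=\{\mu\given\mu(c)\le q\}$, modifies them into the compact-valued maps $\tilde L_q(\theta)$ (equal to $L_q\cap\Gamma(\theta)$ when nonempty and to $\Gamma(\theta)$ otherwise), writes $\Gamma_{\mathrm{opt}}(\theta)=\bigcap_{q\in\Q}\tilde L_q(\theta)$, and then cites Himmelberg's theorems on measurability of countable intersections of compact-valued measurable maps; you instead prove a self-contained measurable-optimisation lemma, establishing measurability of the value function via $\{m<a\}=\bigcup_n G^-(\{h\le a-1/n\})$ and of the argmin map via the identity $(G^*)^-(A)=\{\theta\in G^-(A)\given\tilde m(\theta)=m(\theta)\}$ for the restricted map $\tilde G=G\cap A$. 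Your route buys independence from the external intersection theorem and yields measurability of $\theta\mapsto m(\theta)$ as a useful by-product, at the cost of verifying the inverse-image identity by hand; the paper's route is shorter given the citation. A cosmetic difference: you get $v_\theta>-\infty$ directly from $\mu(\ell)<\infty$ for $\mu\in\mP_1$, whereas the paper argues through uniform integrability along convergent subsequences; both are fine. As in the paper, part (i) implicitly uses that $P_\theta,Q_\theta$ have finite first moments (needed for the coupling machinery and for Strassen's theorem), which is worth stating explicitly.
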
 
\begin{proof} 
Consider first \eqref{item:martingale-transport}, and
denote for brevity $\Gamma(\theta)\defeq 
\Gamma(P_\theta,Q_\theta)\cap \mM_2(\R^d)$.
Recall that $\theta\to \Gamma(\theta)$ is 
compact-valued and measurable;
see the proof of Theorem~\ref{cor:cond-strassen-geni}.
Denote $v_\theta \defeq \inf_{\mu \in \Gamma(\theta)}
    \mu(c)$, and let us check that $v_\theta>-\infty$. 
Let $\mu_1,\mu_2,\ldots\in \Gamma(\theta)$, then
  because $\Gamma(\theta)$ is compact, there exists a
convergent subsequence $\mu_n'\to\mu$. By assumption,
\[
    c_-(x,y) \defeq -\min\{c(x,y),0\}
    \le C (1 + |x| + |y|),
\]
implying that $c_-$ is uniformly integrable with respect to $(\mu_n')$.
Lemma 5.17 of \cite{ambrosio-gigli-savare} states that 
then $\liminf_{n\to\infty} \mu_n'(c) \ge \mu(c) > -\infty$.

Define next for each $q\in\Q$ the level sets $L_q \defeq 
\{\mu\in\mP_1(\R^d\times\R^d)
      \given \mu(c)\le q\}$, which are closed following the argument
    above.
The set-valued mapping
\[
    \tilde{L}_q(\theta) \defeq \begin{cases}
    L_q\cap \Gamma(\theta),&\text{if }L_q\cap\Gamma(\theta)\neq \emptyset, \\
    \Gamma(\theta),&\text{otherwise},
    \end{cases}
\]
is compact-valued. Let us turn next
into showing that $\theta\mapsto \tilde{L}_q(\theta)$ is a measurable
as a set-valued mapping, by considering the set-valued inverse
of a closed $F$
\begin{align*}
    \tilde{L}_q^-(F)
    &= \{\theta\in S\given \tilde{L}_q(\theta)\cap F \neq \emptyset\} \\
    &= \{\theta\in S: \Gamma(\theta) \cap L_q \cap F \ne \emptyset \} \cup
    \{\theta: \Gamma(\theta) \cap F \ne \emptyset, \, \Gamma(\theta)
      \cap L_q = \emptyset\} \\
    &= \Gamma^-(F\cap L_q) \cup \big( \Gamma^-(F) \setminus
    \Gamma^-(L_q)\big),
\end{align*}
which is measurable due to the measurability of $\theta\mapsto
\Gamma(\theta)$.

It is straightforward to check that
\[
    \Gamma_\mathrm{opt}(\theta) \defeq \{\mu\in \Gamma(\theta)\given \mu(c) =
      v_\theta\}
    = \bigcap_{q\in\Q} L_q(\theta).
\]
Because $\Gamma_\mathrm{opt}(\theta)$ is a countable intersection of
compact-valued mappings $\theta \to \tilde{L}_q(\theta)$, and because
$\mP_1(\R^d\times\R^d)$ is a complete separable metric space, 
it follows \cite[Theorems 3.1 and
4.1]{himmelberg} that it is measurable
as a set-valued mapping. Because $\Gamma_\mathrm{opt}$ is non-empty,
compact-valued and measurable,
we may apply the measurable selection theorem 
\cite{kuratowski-ryll-nardzewski} and the evaluation map $\ev_B$ 
as in the proof of Theorem~\ref{cor:cond-strassen-geni} to conclude
the existence of the desired $\gamma_\theta$.

The proof of \eqref{item:transport} is similar, because also
$\theta\to \Gamma^*(\theta) \defeq \Gamma(P_\theta,Q_\theta)$ 
is compact-valued and measurable by Lemma
\ref{the:CouplingCompact} and Proposition
\ref{prop:couplling-measurable}.
\end{proof} 

We record the following remarks about Proposition
\ref{prop:measurable-optimal-transport}:
\begin{enumerate}[(i)] 
  \item The results may be
  extended into countably many $P_\theta^{(i)}$ in similar lines as
  Proposition \ref{prop:countable-gen}.
    \item The assumptions on the cost function $c$ 
coincide with those of Beiglböck,
Henry-Labord\`ere and Penkner
\cite{beiglbock-henry-labordere-penkner}, who consider the 
martingale optimal transport problem in the scalar case.
\item Proposition \ref{prop:measurable-optimal-transport}
\eqref{item:transport} is probably well-known, but we included it for
completeness. Indeed, Corollary 5.22 of Villani 
\cite{villani} is similar, without the integrability assumption on
$P_\theta$ and $Q_\theta$, but with constant lower bound and continuity assumption 
on $c$.
\end{enumerate} 


\section*{Acknowledgements} 

We thank Eero Saksman, Julio Backhoff and a referee for useful remarks.
L.~Leskel\"a was working at the University of Jyv\"askyl\"a while the major
part of this article was written.
M.~Vihola was supported by the Academy of Finland projects 250575 and
274740.


\appendix 

\section{Dense countable families of convex functions}
\label{sec:countable-char} 

\begin{proof}[Proof of Lemma \ref{lem:countable-char}] 
We can take $\testconvex$ as the countable family of max-affine
convex functions $f:\R^d\to\R$ taking the form
\[
    f(x) = \max\{\alpha_1^T x + \beta_1, \ldots, \alpha_n^T x +
      \beta_n\},
\]
where $n\in\N$ and $\alpha_i,\beta_i\in\Q^d$.

To confirm this, take first a non-negative convex $\phi:\R^d\to\R_+$
with $\E \phi(X)<\infty$.
It is not difficult to see that 
for any $\epsilon>0$, we may find a piecewise linear function $g$ defined
as an infinite maximum of affine functions with $\alpha_i,\beta_i\in\Q^d$
\[
    g(x) = \max\{\alpha_i^T x + \beta_i\given i\in\N\},
\]
such that $|g(x)-\phi(x)|\le\epsilon/2$ for all $x\in\R^d$.
Consequently, $|\E g(x) - \E \phi(x) | \le \epsilon/2$.
Taking 
\[
    g_n(x) \defeq \max\{\alpha_i^T x + \beta_i \given i=1,\ldots,n\},
\]
then $g_n\in\testconvex$ and $g_n(x) \uparrow g(x)$ pointwise.
We conclude by monotone convergence that there exists 
$g_m\in\testconvex$ such that $|\E \phi(x) - \E g_m(x) |\le\epsilon$.

For general $\phi:\R^d\to\R_+$, with $\E \phi(X)$ finite, 
it is sufficient to observe that 
\[
    \lim_{n\to\infty} \E \max\{\phi(x), -n\}
    = \E \phi(x),
\]
and then one can take any $\phi_m = (\phi(x)-m)_+$ and apply the result
above to conclude the existence of $f\in\testconvex$ such that $|\E
\phi(X) - \E f(X) |$ is arbitrarily small.

Similarly one can take $\testiconvex$ as the 
set of increasing $f\in \testconvex$.
\end{proof} 


\section{Wasserstein distance as a countable supremum}
\label{sec:wasserstein} 

Let $\Lip_1(\R^d)\defeq\{f:\R^d\to\R\given |f(x)-f(y)|\le |x-y| \text{ for all }
  x,y\in\R^d\}$ stand for the set of 1-Lipschitz functions on $\R^d$.
\begin{lemma} 
    \label{lem:wasserstein-countable} 
There exists a countable subset $\mathcal{T}_d\subset\Lip_1(\R^d)$
such that
\begin{equation}
    d_W(\mu,\nu) =
    \sup_{f\in\Lip_1(\R)} 
    \big[\mu(f)-\nu(f)\big]
    = \sup_{g\in \mathcal{T}_d} \big[\mu(g)-\nu(g)\big]
    \label{eq:wasserstein-variational}
\end{equation}
for all $\mu, \nu \in \mP_1(\R^d)$.
\end{lemma}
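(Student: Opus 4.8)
The plan is to recall the Kantorovich--Rubinstein duality, which states that for $\mu,\nu\in\mP_1(\R^d)$,
\[
 d_W(\mu,\nu) = \sup_{f\in\Lip_1(\R^d)} \big[\mu(f)-\nu(f)\big],
\]
and then to extract a \emph{countable} family $\mathcal{T}_d\subset\Lip_1(\R^d)$ that is "rich enough" to realise this supremum for every pair $(\mu,\nu)$ simultaneously. The first equality in \eqref{eq:wasserstein-variational} is thus the duality theorem, which I would simply cite (e.g.\ from \cite{ambrosio-gigli-savare} or \cite{villani}), so the only real work is the second equality, and since $\mathcal{T}_d\subset\Lip_1(\R^d)$ gives "$\ge$" for free, everything reduces to producing $\mathcal{T}_d$ with the reverse inequality.

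The construction I would use is the following. Fix an enumeration $q_1,q_2,\dots$ of a countable dense set in $\R^d$ (say $\Q^d$). For each finite subset $\{q_{i_1},\dots,q_{i_n}\}$ and each choice of rationals $c_1,\dots,c_n\in\Q$, consider the function
\[
 g(x) \defeq \min_{1\le k\le n}\big( c_k + |x-q_{i_k}| \big),
\]
which is $1$-Lipschitz (a minimum of $1$-Lipschitz functions), and let $\mathcal{T}_d$ be the collection of all such $g$; this is a countable set. The key point is that any $f\in\Lip_1(\R^d)$ is the pointwise infimum of its "McShane upper envelopes" $x\mapsto f(q)+|x-q|$ over $q$ in a dense set, and more usefully for us, $f$ can be \emph{approximated uniformly on compacta from above} by functions of the form above: given $\varepsilon>0$ and a compact set $K$, pick finitely many $q_{i_1},\dots,q_{i_n}$ that are $\varepsilon$-dense in $K$, take $c_k$ rational with $f(q_{i_k})\le c_k\le f(q_{i_k})+\varepsilon$, and check that the resulting $g$ satisfies $f\le g\le f+C\varepsilon$ on $K$.

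To conclude the reverse inequality, fix $\mu,\nu\in\mP_1(\R^d)$ and $f\in\Lip_1(\R^d)$ with $\mu(f)-\nu(f)$ close to $d_W(\mu,\nu)$; without loss of generality normalise $f(0)=0$, so $|f(x)|\le|x|$ and $f$ is $\mu$- and $\nu$-integrable. Using tightness of $\{\mu,\nu\}$ choose $K$ compact with $\mu(K^c),\nu(K^c)$ small, and use the uniform integrability of $|x|$ under $\mu$ and $\nu$ to control the tails of $f$, $g$ outside $K$. Then pick $g\in\mathcal{T}_d$ with $f\le g\le f+\varepsilon$ on $K$ and $g$ still $1$-Lipschitz; since $g\ge f$ everywhere one gets $-\nu(g)\le -\nu(f)$, while $\mu(g)\ge\mu(f)$ only on $K$ and one must estimate $\int_{K^c}(g-f)\,\ud\mu$ and similarly $\int_{K^c}|f-g|\,\ud\nu$ by the tail bounds. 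Combining, $\mu(g)-\nu(g)\ge \mu(f)-\nu(f)-\delta(\varepsilon)$ with $\delta(\varepsilon)\to0$, which gives $\sup_{g\in\mathcal{T}_d}[\mu(g)-\nu(g)]\ge d_W(\mu,\nu)$.

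The main obstacle is the tail control: $\mathcal{T}_d$ is constructed via approximation on compact sets, but the Wasserstein functional is sensitive to behaviour at infinity, so one has to be careful that replacing $f$ by $g$ does not lose a non-negligible amount of mass in the integral against $\mu$ or $\nu$. This is exactly where the uniform-integrability characterisation of $\mP_1(\R^d)$ (and the bound $|f(x)|,|g(x)|\le|x|+\mathrm{const}$) is used. A cleaner alternative, which I would mention as a fallback, is to invoke that $\mP_1(\R^d)$ is a separable metric space under $d_W$: take a countable dense set $\{\mu_j\}$, for each pair $(\mu_j,\mu_k)$ pick a near-optimal $f_{jk}\in\Lip_1(\R^d)$, let $\mathcal{T}_d\defeq\{f_{jk}\}$, and then an easy $3\varepsilon$-argument (approximating arbitrary $\mu,\nu$ by $\mu_j,\mu_k$ in $d_W$, and noting $|\mu(f)-\mu_j(f)|\le d_W(\mu,\mu_j)$ for $f\in\Lip_1$) upgrades this to all pairs. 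This second route avoids the tail estimate entirely at the cost of being less explicit.
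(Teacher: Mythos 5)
Your main construction is essentially the paper's own proof: the same countable family of rational min-of-cones $g(x)=\min_k\big(c_k+|x-q_k|\big)$, the same uniform approximation from above of an arbitrary $1$-Lipschitz $f$ on a large ball, and the same tail estimate via the finite first moments of $\mu$ and $\nu$ after normalising $f(0)=0$. The argument is correct (your separability-based fallback would also work), so no changes are needed.
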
 
\begin{proof} 
The first equality in \eqref{eq:wasserstein-variational} 
is the Kantorovich-Rubinstein duality \cite[Remark 6.5]{villani}.
Inspired by \cite[Theorem 3.1.5]{stroock}, we may take
$\mathcal{T}_d$ as all functions of the form
\begin{align}
    g(x) &= \min\{g_1(x), g_2(x),\ldots, g_n(x)\} 
    \label{eq:g-def} \\
    g_k(x) &= q_k + |x-y_k|\nonumber
\end{align}
where $n\in\N$, $q_k\in\Q$ and $y_k\in\Q^d$.

Clearly $\mathcal{T}_d \subset \Lip_1(\R^d)$,
and for any function $f\in\Lip_1(\R^d)$, any compact set $K\subset\R^d$
and $\epsilon>0$ there exists $g\in\mathcal{T}_d$ such that
\[
    \sup_{x\in K} |f(x) - g(x)|\le \epsilon.
\]
Namely, take $y_1,\ldots,y_n$ such that for all $x\in K$ there exists
$m(x)$  such that $|x-y_{m(x)}|\le \epsilon/3$,
and choose $q_k$ such that $0 \le q_k - f(y_k) \le \epsilon/3$.
Then, $g(x)\ge f(x)$ for all $x\in\R^d$, and for any $x\in K$ we have
\begin{align*}
    |g(x) - f(x)| 
    &= \big(g(x) - g(y_{m(x)}) \big)
    + \big(g(y_{m(x)}) - f(y_{m(x)})\big) 
     + \big(f(y_{m(x)}) - f(x)\big) \\
    &\le \frac{\epsilon}{3} + \big(q_{m(x)} - f(y_{m(x)})\big)
    + \frac{\epsilon}{3}.
\end{align*}

Fix then $\mu,\nu\in \mP_1(\R^d)$ and $f\in \Lip_1(\R^d)$, which we
may assume without loss of generality to satisfy $f(0)=0$.
For any $\epsilon>0$ we may find $M<\infty$ such that 
\[
    \int_{|x|>M} |x|  \big(\mu(\ud x) + \nu(\ud x)\big)
    \le \frac{\epsilon}{8},
\]
because $\mu,\nu$ are integrable. Let
$g\in \mathcal{T}_d$ such that $|f(x)-g(x)|\le \epsilon/8$ 
for all $|x|\le M$. Then, 
\[
    \mu(g)-\nu(g)
    \ge \mu(f) - \nu(f) - \mu(|f-g|) - \nu(|f-g|)
    \ge \mu(f) - \nu(f) - \epsilon,
\]
because 
\begin{align*}
    \mu(|f-g|) &\le \frac{\epsilon}{8} + \int_{|x|>M}( \big(|f(x) +
    |g(x)| \big)\mu(\ud x) \\
    &\le \frac{\epsilon}{8} + 2\int_{|x|>M}|x| \mu(\ud x)
    + |g(0)|,
\end{align*}
so $\mu(|f-g|) \le \epsilon/2$ and similarly $\nu(|f-g|)\le \epsilon/2$.
\end{proof} 


\section{From Measure-valued mappings to kernels}
\label{sec:measures-to-kernel} 

\begin{lemma} 
\label{lem:measures-to-kernel} 
For any Borel set $B \subset \R^d$, the evaluation map $\ev_B: \mu \mapsto \mu(B)$ from $\mP_1(\R^d)$
to $\R$ is measurable with respect to the Borel sigma-algebra generated by the
Wasserstein metric on $\mP_1(\R^d)$.
\end{lemma}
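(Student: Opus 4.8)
The plan is to use a monotone class / Dynkin $\pi$--$\lambda$ argument. Let $\mH$ denote the collection of Borel sets $B \subset \R^d$ for which $\ev_B$ is Borel measurable on $(\mP_1(\R^d), d_W)$. The goal is to show $\mH$ contains all Borel sets, and since $\mH$ obviously is closed under complements (because $\ev_{B^c} = 1 - \ev_B$) and under countable disjoint unions (because $\ev_{\bigcup_n B_n} = \sum_n \ev_{B_n}$ is a pointwise limit of Borel functions when the $B_n$ are disjoint), $\mH$ is a $\lambda$-system. By Dynkin's lemma it therefore suffices to exhibit a $\pi$-system generating the Borel $\sigma$-algebra of $\R^d$ all of whose members lie in $\mH$. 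A natural choice is the collection of open sets, or even just open balls and their finite intersections; these form a $\pi$-system generating $\mathcal{B}(\R^d)$.

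The key step is thus to show $\ev_U \in \mH$ for every open $U \subset \R^d$. Fix such a $U$ and pick continuous bounded functions $\phi_k : \R^d \to [0,1]$ with $\phi_k \uparrow \ind_U$ pointwise (for instance $\phi_k(x) = \min\{1, k\,\mathrm{dist}(x, U^c)\}$, with the convention $\mathrm{dist}(x,\emptyset)=\infty$). For each fixed bounded continuous $\phi$, the evaluation $\mu \mapsto \int \phi \,\ud\mu$ is continuous on $\mP_1(\R^d)$: indeed $d_W(\mu_n,\mu)\to 0$ implies $\mu_n \to \mu$ in distribution (as recalled in Section~\ref{sec:Wasserstein}), hence $\int \phi\,\ud\mu_n \to \int \phi\,\ud\mu$. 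So $\mu \mapsto \int \phi_k \,\ud\mu$ is continuous, a fortiori Borel. By monotone convergence, $\ev_U(\mu) = \mu(U) = \lim_{k\to\infty} \int \phi_k \,\ud\mu$ for every $\mu$, so $\ev_U$ is a pointwise limit of Borel functions and therefore Borel. Hence every open set lies in $\mH$, and the Dynkin argument closes.

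I expect the main (and only mild) obstacle to be bookkeeping: one must make sure the $\lambda$-system axioms are verified in the correct form -- in particular that countable unions are reduced to \emph{disjoint} ones before invoking $\ev_{\bigsqcup_n B_n} = \sum_n \ev_{B_n}$, and that the generating $\pi$-system (open sets, or a basis of open balls closed under finite intersection) indeed generates $\mathcal{B}(\R^d)$. Everything else -- the continuity of $\mu\mapsto\int\phi\,\ud\mu$ for bounded continuous $\phi$, and the monotone-convergence passage to $\ind_U$ -- is routine given the characterisation of $d_W$-convergence already stated in the paper.
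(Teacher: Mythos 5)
Your proposal is correct and follows essentially the same route as the paper: establish measurability of $\ev_U$ for open $U$ via bounded continuous functions increasing to $\ind_U$ (using that $d_W$-convergence implies convergence in distribution) and monotone convergence, then extend to all Borel sets by a Dynkin $\pi$--$\lambda$/monotone class argument with the open sets as the generating $\pi$-system. The only cosmetic difference is that you verify the $\lambda$-system axioms in the complement/disjoint-union form (and should note, trivially, that $\R^d\in\mH$), whereas the paper uses proper differences and monotone unions; both are equivalent and the argument is sound.
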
 
\begin{proof} 
Assume first that $B$ is open. Let $f_n$ be bounded positive 
continuous functions such that $f_n \uparrow \ind_B$ pointwise;
such functions exist by Urysohn's lemma.
        
Note that for each $n$, the map $\Phi_n: \mP_1(\R^d) \to \R$ defined by $\Phi_n(\mu) = \mu(f_n)$ is continuous and thus measurable.
Furthermore, the monotone convergence theorem implies that
$\Phi_n(\mu) \uparrow \ev_B(\mu)$ for every $\mu$ in $\mP_1(\R^d)$. Thus the map $\ev_B$ is
measurable, being a pointwise limit of measurable maps.

We next show that the claim holds for any Borel set. Denote by $\mE$ the
collection of Borel sets $B \subset \R^d$ such that $\ev_B$ is measurable.
If $A,B\in \mE$ and $A\subset B$, then $\ev_{B\setminus A}(\mu) = \ev_{B}(\mu) - \ev_{A}(\mu)$, so
$B \setminus A \in \mE$. Similarly, one can show that 
$\mE$ is closed under monotone unions, and clearly $\R^d \in \mE$.
We conclude that $\mE$ is a Dynkin's $\lambda$-system which contains the open
sets of $\R^d$. Because the collection of open sets is closed under finite intersections,
an application of a monotone class theorem \cite[Theorem 1.1]{kallenberg} shows that
$\mE$ contains all Borel sets of $\R^d$.
\end{proof}


\bibliographystyle{abbrv}
\bibliography{refs.bib}

\end{document}